\newcommand{\PR}{{\bf Prob}}
\newcommand{\szego}{Szeg\"o\ }
\newcommand{\kahler}{K\"ahler\ }
\newcommand{\PP}{{\mathbb P}}
\newcommand{\R}{{\mathbb R}}
\newcommand{\C}{{\mathbb C}}
\newcommand{\Z}{{\mathbb Z}}
\newcommand{\CP}{\C\PP}
\newcommand{\dbar}{\bar\partial}
\newcommand{\ddbar}{\partial\dbar}
\newcommand{\E}{{\mathbf E}}
\renewcommand{\phi}{\varphi}
\newcommand{\acal}{\mathcal{A}}
\newcommand{\bcal}{\mathcal{B}}
\newcommand{\dcal}{\mathcal{D}}
\newcommand{\ecal}{\mathcal{E}}
\newcommand{\jcal}{\mathcal{J}}
\newcommand{\lcal}{\mathcal{L}}
\newcommand{\pcal}{\mathcal{P}}
\newcommand{\mcal}{\mathcal{M}}
\newcommand{\ocal}{\mathcal{O}}
\newcommand{\zcal}{\mathcal{Z}}
\newcommand{\ga}{\gamma}
\newcommand{\de}{\delta}
\newtheorem{theo}{{\sc Theorem}}
\newtheorem{cor}[theo]{{\sc Corollary}}
\newtheorem{lem}[theo]{{\sc Lemma}}
\newtheorem{prop}[theo]{{\sc Proposition}}
\newtheorem{maintheo}{{\sc Theorem}}
\newenvironment{rem}{\medskip\noindent{\it Remark:\/} }{\medskip}
\newenvironment{defin}{\medskip\noindent{\it Definition:\/} }{\medskip}
\title[Large deviations of empirical zero point measures on Riemann
surfaces, I: $g = 0$
 ] {Large deviations of empirical zero point measures on Riemann surfaces, I: $g = 0$}
\author{Ofer Zeitouni}
\address{Faculty of Mathematics, Weizmann Institute of Science, Rehovot 76100, Israel and School of Mathematics, University of Minnesota, Minneapolis 55455, USA}
\email{zeitouni@math.umn.edu}
\author{Steve Zelditch}
\address{Department of Mathematics, Johns Hopkins University, Baltimore,
MD 21218, USA} \email{szelditch@jhu.edu}
\thanks{Research partially supported by NSF grant
DMS-0804133
and by a grant from the Israel Science Foundation (first author), and
by NSF grant
  DMS-0603850 (second author).}
\date{April 26,  2009}
\begin{document}

\begin{abstract} We prove a large deviation principle for empirical measures $$ Z_s: = \frac{1}{N} \sum_{\zeta: s(\zeta) = 0}
 \delta_{\zeta}, \;\;\; (N: = \# \{\zeta: s(\zeta) = 0)\}$$
 of zeros of random polynomials  in one variable. By random polynomial, we mean a  Gaussian measure
 on the space $\pcal_N = H^0(\CP^1, \ocal(N))$ determined by inner products $G_N(h, \nu)$ induced
 by any smooth Hermitian metric $h$ on $\ocal(1) \to \CP^1$ and any probability measure $d\nu$ on $\CP^1$ satisfying the weighted
 Bernstein-Markov inequality. The
 speed of the LDP is $N^2$ and the  rate function is closely related to the weighted energy of probability measures on $\CP^1$,
 and in particular its unique minimizer is the weighted equilibrium measure.
 \end{abstract}

\maketitle

 \section{\label{introduction} Introduction and statement
 of results}
The purpose of this article is to establish a large deviations
principle for the empirical measure \begin{equation}\label{ZN}
Z_s: = d\mu_{\zeta}: =  \frac{1}{N} \sum_{\zeta: s(\zeta) = 0}
 \delta_{\zeta}, \;\;\; N: = \# \{\zeta: s(\zeta) = 0\} \end{equation}
  of zeros of a
random polynomial $s$ of degree $N$.  Here, $\delta_{\zeta}$ is
the Dirac point measure at $\zeta \in \C$.  We define random
polynomials of degree $N$ by putting geometrically defined
Gaussian probability measures $d\gamma_N$ on the space $\pcal_N$
of holomorphic polynomials of degree $N$, or equivalently,
Fubini-Study measures $dV^{FS}_N$ on the projective space $\PP
\pcal_N$ of polynomials (see \S \ref{BACKGROUND} for
background). The measures $d\gamma_{N}, dV^{FS}_N$ are
determined by  a pair $(h = e^{- \phi}, \nu)$ consisting of  a
`weight' $\phi$ or (globally) a
 Hermitian metric $h$ on the hyperplane line bundle $\ocal(1) \to \CP^1$,  and
a probability measure $\nu$  on $\CP^1$ satisfying the
Bernstein-Markov condition (\ref{BM}). The Gaussian measure on
$H^0(\CP^1, \ocal(N))$ and the Fubini-Study measure on $\PP
H^0(\CP^1, \ocal(N))$  are induced from the Hermitian inner
products
\begin{equation} \label{INNERPRODUCT} ||s||^2_{G_N(h, \nu)} :=
\int_C \|s(z)\|^2_{h^N} d\nu(z), \;\; (s \in \pcal_N).
\end{equation}
 The zeros then become  equidistributed with high probability  in the large $N$ limit according to an equilibrium measure $d\nu_{h, K}$
 depending on $h$ and the support $K$ of $\nu$,
which reflects the competition between the  repulsion of nearby
zeros and the force of the  external electric field (curvature
form) $\omega_h$ of $h$ (see \cite{SZ,Ber1,Ber2,BB}). The
   large deviations results show that the empirical measures (\ref{ZN}) are concentrated exponentially
   closely (with speed $N^2$) to $\nu_{h, K}$ as $N \to \infty$, with rate given by a rate function $\tilde 
I^{h,K}$ that is minimized by $\nu_{h, K}$.

The large deviations rate function is determined from the joint
probability density $D_N(\zeta_1, \dots, \zeta_{N})$
 of zeros, which
measures the likelihood of a given configuration of $N$ points
arising as zeros of $s \in \PP H^0(\CP^1, \ocal(N))$. The joint
probability density is the density of a joint probability current
  on  the configuration space
 \begin{equation} \label{CONFIG} (\CP^1)^{(N)} = Sym^{N} \CP^1 :=  \underbrace{\CP^1\times\cdots\times \CP^1}_N /S_{N} \end{equation}
 of $N$ points of  $\CP^1$.   Here, $S_N$ is the symmetric
  group on $N$ letters. The joint probability current is by definition the pushforward
  \begin{equation} \label{JPCDEF}\vec K_n^N(\zeta_1, \dots, \zeta_N) : =  \dcal_* dV_N^{FS} \end{equation}
  of the Fubini-Study measure on $\PP \pcal_N$ under the  `zero set' or divisor map
  \begin{equation} \label{ZEROSMAP} \dcal: \pcal_N \to (\CP^1)^{(N)}, \;\;\;\dcal(p)= \zeta_1 + \cdots + \zeta_N, \end{equation}
  where $\{\zeta_1, \dots, \zeta_N\}$ is the zero set of $s$.
 Following a   standard  notation in algebraic geometry, we are
 writing
  an unordered set of points $ \{\zeta_1,\dots, \zeta_{N}\} $   as a formal sum (i.e.  a divisor) $\zeta_1 + \cdots + \zeta_N \in (\CP^1)^{(N)}$.
In the case of polynomials, $\dcal$ is obviously surjective (any
$N$-tuple of points is the zero set of some polynomial of degree
$N$);   one may identify $\PP \pcal_N \simeq (\CP^1)^{(N)}$.

 The zero set can also be encoded by the probability
  measure  (\ref{ZN}) on $\CP^1$. This identification defines  a map
 \begin{equation}\label{DELTADEF} \mu : (\CP^1)^{(N)} \to \mcal(\CP^1), \;\;\; d\mu_{\zeta_1 +  \dots + \zeta_{N}} = d\mu_{\zeta}: =
  \frac{1}{N} \sum_{j = 1}^{N}
 \delta_{\zeta_j}, \end{equation}
 where $\mcal(\CP^1)$ is the (Polish) space
 of probability measures on $\CP^1$, equipped with
 the weak-* topology (i.e. the topology induced by
 weak, or equivalently vague, convergence of measures). In general, for any closed
 subset $F \subset \CP^1$ we denote by $\mcal(F)$ the probability
 measures supported on $F$.
  Thus, the zero sets can all be embedded as elements of
  the space $\mcal(\CP^1)$ of probability measures on $\CP^1$. This point of view is ideal for taking large $N$ limits, and has
  been previously used in many similar situations, for instance in analyzing the eigenvalues of random matrices \cite{BG,BZ,HP00a}.

Under  the  map $p \to \mu \circ \dcal(p)$ we further push forward
the joint probability current
 to obtain a probability  measure
\begin{equation} \label{LDPNa} \PR_N =  \mu_* \dcal_* dV_{FS}^N \end{equation}  on
 $\mcal(\CP^1)$.  Our main
results show that this  sequence of measures $\PR_N$ satisfies a
large deviations principle with speed $N^2$ and with a rate
function $I^{h,K}$ reflecting the choice of $(h, \nu)$.  Roughly
speaking this means that for any Borel subset $E \subset
\mcal(\CP^1)$,
$$\frac{1}{N^2} \log \PR_N \{\sigma \in \mcal: \sigma \in E\} 
\to - \inf_{\sigma \in E} I^{h,K}(\sigma). $$

Before stating our results, we  recall some notation and
background. Throughout this article we use the language of complex
geometry, and in particular we identity $\pcal_N = H^0(\CP^1,
\ocal(N))$, i.e. we identify polynomials of degree $N$ with
holomorphic sections of the $N$th power of the hyperplane line
bundle \cite{GH} (Chapter I.3). In the affine chart $U = \CP^1
\backslash \{\infty\}$, and in the standard holomorphic frame $e:
U \to \CP^1$, the Hermitian metric $h$ is represented by the
function $||e||_{h}^2 = e^{- \phi}$.

In weighted potential theory, the function $\phi$ is referred to
as a weight \cite{ST,B,B2}. Several authors have generalized
weighted potential theory to \kahler manifolds, and we will use
their geometric language \cite{GZ,Ber1,Ber2,B,B2,BS}. In the local
frame any holomorphic section may be written $s = f e$ where $f
\in \ocal(U)$ is a local holomorphic function. The inner product
(\ref{INNERPRODUCT}) then takes the form,
\begin{equation} \label{INNERPRODUCTa} ||s||_{G_N(h, \nu)} =
\int_{\C} |f(z)|^2 e^{-  N \phi} d\nu(z).
\end{equation}
The measure $\nu$ is assumed to satisfy  the weighted
Bernstein-Markov condition (see \cite{B2} (3.2) or \cite{BB},
Definition 4.3 and references):

\noindent
{\it For all $\epsilon
> 0$ there exists $C_{\epsilon} > 0$ so that
\begin{equation} \label{BM} \sup_K \|s(z)\|_{h^N} \leq C_{\epsilon}
    e^{ \epsilon N}
||s||_{G_N(h, \nu)}\,, \quad s\in H_0(\CP^1,\ocal(N)).
\end{equation}
} Here, and throughout this article, we write  \begin{equation}
\label{K} K = \;\; \mbox{ supp}\; \nu. \end{equation} We further
assume that \begin{equation} \label{REGULAR} K\;  {\it is\;
regular}
\end{equation} in the sense that every point of $\partial K$ is a regular
point. For this paper, the operative meaning of regular is that
for any $z^* \in
\partial K$, the capacity $\mbox{Cap}_h(D(z^*, \epsilon) \cap K) >
0$ for every $\epsilon > 0$, where $\mbox{Cap}_h$ is the Green's
capacity (see Definition \ref{CAPKOMEGA}). Here, $D(z^*,
\epsilon)$ is a metric disc of radius $\epsilon;$ the condition is
independent of the choice of metric. We refer to \S \ref{IRREG}
and to \cite{Lan} for background on regular and irregular points
of compact sets.

 We then define the
Gaussian probability measures $\gamma_{h^N, \nu}$ on $\pcal_N =
H^0(\CP^1, \ocal(N))$ as the Gaussian measure determined by the inner
product (\ref{INNERPRODUCT}) (the definition is reviewed in \S
\ref{GFS}).  The associated Fubini-Study measures are denoted by
$dV_{h^N, \nu}^{FS}$ on $\PP H^0(\CP^1, \ocal(N))$
(see \S \ref{FSDEF} for
the definition).

 In order to take advantage of the compactness of $\CP^1$, we now
introduce some notions of \kahler potential theory which will be
discussed further in \S \ref{GREEN}. Let $\omega_h$ be the
curvature $(1,1)$ form of a smooth Hermitian metric $h$ on
$\CP^1$.
 The Green's
function $G_h$ relative to $\omega_h$ is defined to be  the unique
solution $G_h(z, \cdot) \in \dcal'(\CP^1)$ of
\begin{equation} \label{GHDEF} \left\{ \begin{array}{ll} (i) &  dd^c_w G_h(z,w)   =  \delta_z(w) -
\omega_h (w), \\ \\ (ii) & G_h(z,w) = G_h(w, z), \\ & \\(iii) &
\int_{\CP^1} G_h(z,w) \omega_h(w) = 0, \end{array} \right.
\end{equation}
where the equality in the top line is in the sense of $(1,1)$
forms. Existence of $G_h$ is guaranteed by the $\ddbar$ Lemma even
when $\omega_h$ is non-positive, i.e. is not a \kahler form;
uniqueness follows from  condition (iii).
 As shown in Lemma \ref{GHFORM} of \S \ref{GREEN}, in the frame $e(z)$ over the affine chart $\C$ in which
 $h = e^{- \phi}$ and $\omega_h = dd^c \phi$, the Green's function has the local expression,
\begin{equation} \label{GHFORMintro}  G_{h}(z,w) = 2 \log
|z - w| - \phi(z) - \phi(w) + E(h),
\end{equation}
where \begin{equation} \label{EH} E(h): =  \left( \int_{\CP^1}
\phi(z) \omega_h + \rho_{\phi}(\infty) \right), \end{equation}
 $\rho_{\phi}$ being  a certain Robin constant (see (\ref{ROBINDEF}) of \S
 \ref{IP}, and also (\ref{LOGROBIN}) and (\ref{INTEGRALa})).

The Green's potential of a measure $\mu$ (with respect to
$\omega_h$) is defined by
\begin{equation}\label{Green's potential}  U^{\mu}_{h} (z) = \int_{\CP^1}
G_{h}(z,w) d\mu(w), \end{equation} and the Green's energy by
\begin{equation}\label{GEN} \ecal_{h}(\mu) = \int_{\CP^1 \times \CP^1} G_{h}(z,w)
d\mu(z) d\mu(w). \end{equation}

We will see, c.f. Lemma \ref{lem-ue} and Proposition \ref{LSCJG},
that under Assumptions (\ref{BM}) and (\ref{REGULAR}), for any
$\mu \in \mcal(\CP^1)$, $\ecal_{h}(\mu)<\infty$ and $|\sup_K
U_h^\mu|<\infty$. In particular, the function
\begin{equation} \label{IGREEN}  I^{h, K} (\mu) =
- \frac{1}{2} \ecal_{h}(\mu) + \sup_K U^{\mu}_{h},\quad \mu\in
\mcal(\CP^1)
\end{equation}
is well-defined (with $+\infty$ as possible value).
Set
\begin{equation}
    \label{eq-ofer1}
    E_0(h)=\inf_{\mu\in \mcal(\CP^1)} I^{h,K}(\mu), \quad
    \tilde I^{h,K}=I^{h,K}-E_0(h)\,.
\end{equation}
The infimum $\inf_{\mu\in \mcal(\CP^1)} I^{h,K}(\mu)$ is achieved
at the Green's equilibrium measure $\nu_{h, K}$ with respect to
$(h, K)$, and   $E_0(h)= \frac12 \log \mbox{Cap}_{\omega_h} (K)$,
where $\mbox{Cap}_{\omega_h}(K)$ is the Green's capacity. See
Lemma \ref{NC} (proved in \S \ref{SLATERG0}). By the Green's
equilibrium measure we mean the minimizer of $- \ecal_h$ on
$\mcal(K)$. We refer to   \S \ref{WEM} for definitions and
discussion  of $\nu_{h, K}$ and of $\mbox{Cap}_{\omega_h} (K)$
(See \eqref{CAPKOMEGA}).

\subsection{Statement of results}
Our main result is the following:
\begin{maintheo}\label{g0}  Let $h$ be a smooth Hermitian metric
 on $\ocal(1) \to \CP^1$ and let
$d\nu \in  \mcal(\CP^1)$ satisfy the Bernstein-Markov property
(\ref{BM}) and the regularity assumption (\ref{REGULAR}). Then
$\tilde I^{h,K}$ of
\eqref{eq-ofer1} is a convex rate function and the sequence of
probability measures $\{{\bf Prob}_N\}$ on $\mcal(\CP^1)$ defined
by (\ref{LDPNa}) satisfies a large deviations principle with speed
$N^2$ and rate function
$\tilde I^{h, K} $. Further,
there exists a unique measure $\nu_{h, K} \in \mcal(\CP^1)$
minimizing $\tilde I^{h, K}$, namely  the Green's equilibrium measure of
$K$ with respect to $h$.
\end{maintheo}
(Recall that a function $I:\mcal(\CP^1)\to \R$ is a rate
function if it is lower semicontinuous and non-negative.)

 Theorem
\ref{g0}
shows that the empirical
measures $d\mu_{\zeta}$, see (\ref{ZN}), concentrate near $\nu_{h, K}$
at an exponential rate. More precisely, if $B(\sigma, \delta)$
denotes the ball of radius $\delta$ around $\sigma \in
\mcal(\CP^1)$ in the Wasserstein metric, and
$B^o(\sigma,\delta)$ (respectively, $\overline{B(\sigma,\delta)}$)
denote its interior (respectively, its closure),
then \begin{equation}
\label{LDPROP}
\begin{array}{lll} - \inf_{\mu \in B^o(\sigma, \delta)} \tilde I^{h, K}(\mu)
&\leq & \liminf_{N \to \infty}
\frac{1}{N^2} \log {\bf Prob} _N(B(\sigma, \delta)) \\ && \\
&\leq&  \limsup_{N \to \infty} \frac{1}{N^2} \log {\bf Prob}
_N(B(\sigma, \delta)) \leq    - \inf_{\mu \in \overline{B(\sigma,
\delta)}} \tilde I^{h, K} (\mu). \end{array}
\end{equation} This implies afortiori that the expected value of
$d\mu_{\zeta}$ tends to $\nu_{h, K}$, refining  the result of
\cite{SZ} on the equilibrium distribution of zeros in the
unweighted case and the more general results in the  subsequent
articles \cite{B,BS,Ber1,Ber2}. Intuitively,   in the unweighted
case, zeros repel each other like electrons to the outer boundary
of $K$. A Hermitian metric or weight $h = e^{- \phi}$ with
$\omega_{\phi} > 0$  behaves like an uphill potential which pushes
electrons back into the interior of $K$ and gives rise to an
equilibrium potential which charges the interior of $K$, with
extra accumulation along $\partial K$.

The inner product (\ref{INNERPRODUCT}) depends only on  the
restriction of the metric   $h$  to $K$, see (\ref{K}), and
consequently the rate function should only depend on this
restriction. To see this, we rewrite it  in the standard affine
chart $\C$ and frame for $\ocal(1)$ in the form
\begin{equation} \label{IHKLOCAL} - \frac{1}{2} \ecal_{h}(\mu) +
\sup_K U^{\mu}_{h} = - \Sigma (\mu) + \sup_{z \in K} \{ 2
\int_{\C} \log |z - w| d\mu(w) - \phi(z)\},
\end{equation}  where
\begin{equation} \label{SIGMADEF}  \Sigma(\mu) = \int_{\C \times \C} \log |z - w| d\mu(z) d\mu(w) \end{equation}
is the logarithmic energy or entropy function. In the large
deviations analysis, it is more convenient to use the  formula in
Theorem \ref{g0} which uses the `compactification' of the metric
to $\CP^1$.

\subsection{Examples}

As an illustration  of the methods and results, we observe that
Theorem \ref{g0} applies to the  Kac-Hammersley ensemble as in
\cite{SZ}, where $d\nu = \delta_{S^1}$ (the invariant probability
measure on the unit circle), and where the weight $e^{- \phi} =
1$. Hence, the inner product is simply $ \frac{1}{2 \pi} \int_0^{2
\pi} |f(e^{i \theta})|^2 d\theta. $ It is simple to verify that
$d\nu = \delta_{S^1}$ satisfies the Bernstein-Markov property,
i.e. that for holomorphic polynomials of degree $N$,
$||p_N||_{S^1} \leq C_{\epsilon} e^{\epsilon N} \frac{1}{2 \pi}
\left(\int |p_N(e^{i \theta})|^2 d\theta\right)^{1/2}. $ Indeed,
we let $\Pi_N(z,w) = \sum_0^N z^n \bar{w}^n$ denote the \szego
reproducing kernel for $\pcal_N$ with this measure. Then by the
Schwarz inequality,
$$\sup_{z \in S^1} |p_N(z)| \leq \sup_{z \in S^1}
\sqrt{\Pi_N(z,z)} ||p_N||_{L^2(\nu)} \leq \sqrt{N}
||p_N||_{L^2(\nu)}. $$

 On $S^1$ we are
taking the weight to be `flat', i.e. the Hermitian metric to be
$\equiv 1$. We  are free to choose a smooth extension of this
Hermitian metric  to $\ocal(1) \to \CP^1$. For instance, we may
take $h = e^{- \phi}$ to be $S^1$ invariant, equal $1$ in a
neighborhood of $\CP^1$ and to equal the Fubini-Study metric in a
neighborhood of $\infty$. There is of course no unique choice of
the smooth  extension.  With any of these extensions,
$\delta_{S^1}$ is easily seen to satisfy the condition
(\ref{REGULAR}).

At the opposite extreme, the methods and results apply to the case
where $d\nu = \omega_{FS}$, the Fubini-Study \kahler form, and
where $h = h_{FS} = e^{- \log (1 + |z|^2)}. $ The Bernstein-Markov
property follows from the same calculation as in the
Kac-Hammersley example, except that the \szego reproducing  kernel
 is different (but
still equals $N + 1$ on the diagonal; see \cite{SZ,SZ2,SZ3} for
further background). The regularity  condition is obviously
satisfied.

\subsection{An application - hole probabilities}

The large deviation results give an accurate upper bound for
`hole probabilities' for our ensembles of Gaussian random
polynomials of one complex variable. A hole probability for an
open set $U$ is the probability that the random polynomial has no
zeros in $U$. Large deviations estimates for hole probabilities
for balls $U = B_R$ of increasing radius were proved in
\cite{SoTs} for certain random analytic functions.  More in line
with the present paper are asymptotic  hole probabilities as the
degree $N \to \infty$ of random holomorphic sections of powers
$L^N \to M$ of positive line bundles in \cite{SZZr}. The results
there hold in all dimensions, but the stronger assumption is made
that  $h$ is a Hermitian metric with positive curvature $(1,1)$
form.

We now state a hole probability for our  general Gaussian
ensembles on $\CP^1$, where the hole is an open set $U \subset
\CP^1$. We consider the
$$A_U=\{\mu\in {\mcal}(\C): \mu( U)=0\}\,.$$ The following
hole probability has the same speed of  exponential decay as in
\cite{SZZr}.

\begin{cor}
    \label{cor-hole1}
    For any of the Gaussian ensembles $G_N(h, \nu)$ and
for any open set $U$,
\begin{equation}
\label{eq-of2a} \limsup_{N\to\infty}\frac{1}{N^2}\log \PR_N(A_U)
\leq -\inf_{\mu\in A_U} \tilde I^{h,K}(\mu)\,.
\end{equation}
\end{cor}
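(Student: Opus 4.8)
The plan is to deduce the hole probability bound directly from the large deviations upper bound in Theorem \ref{g0}, applied to the set $A_U$. First I would observe that the event that the random polynomial $s$ has no zeros in the open set $U$ translates, under the identifications $\pcal_N \simeq (\CP^1)^{(N)}$ and the map $\mu$ of (\ref{DELTADEF}), precisely into the event that the empirical zero measure $d\mu_\zeta$ places no mass on $U$; that is, $\{s : s(\zeta) \neq 0 \ \forall \zeta \in U\}$ pushes forward under $\mu \circ \dcal$ to $A_U = \{\mu \in \mcal(\CP^1) : \mu(U) = 0\}$. Hence $\PR_N(A_U)$ is exactly the hole probability in question, and it suffices to apply the upper bound of the LDP to the set $A_U$.

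The key point is then a topological one: the large deviations upper bound in the form stated in Theorem \ref{g0} gives, for any \emph{closed} set $F \subset \mcal(\CP^1)$,
\begin{equation*}
\limsup_{N \to \infty} \frac{1}{N^2} \log \PR_N(F) \leq - \inf_{\mu \in F} \tilde I^{h,K}(\mu),
\end{equation*}
so I would verify that $A_U$ is closed in the weak-* topology on $\mcal(\CP^1)$. Since $U$ is open, the functional $\mu \mapsto \mu(U)$ is lower semicontinuous for weak-* convergence (by the portmanteau theorem: $\liminf_n \mu_n(U) \geq \mu(U)$ whenever $\mu_n \to \mu$ weak-*), and therefore the set $\{\mu : \mu(U) \leq 0\} = \{\mu : \mu(U) = 0\} = A_U$ is closed. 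Applying the upper bound to $F = A_U$ yields exactly (\ref{eq-of2a}).

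The main obstacle—really the only place any care is needed—is the reduction in the first step: one must make sure that ``$s$ has no zeros in $U$'' corresponds to $A_U$ and not to some slightly different set (e.g. one involving the closure $\overline U$), and that the pushforward identity $\PR_N = \mu_* \dcal_* dV^{FS}_N$ of (\ref{LDPNa}) is used correctly so that $\PR_N(A_U)$ genuinely equals the probability that a $dV^{FS}_N$-random section has no zero in $U$. Once this bookkeeping is settled, the rest is an immediate invocation of the upper bound half of Theorem \ref{g0} together with the elementary weak-* lower semicontinuity of $\mu \mapsto \mu(U)$ for open $U$; no new estimates on $\ecal_h$ or $U_h^\mu$ are required beyond those already established. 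I would also remark that, because the bound is only an upper bound, no lower semicontinuous regularization of $A_U$ (passing to interior) is needed, which is why the statement is phrased with $\limsup$ and $\inf_{\mu \in A_U}$ rather than as a full LDP for the hole event.
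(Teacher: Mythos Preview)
Your proposal is correct and follows essentially the same approach as the paper: both arguments show that $A_U$ is closed in the weak-* topology via the portmanteau theorem (you use lower semicontinuity of $\mu\mapsto\mu(U)$ for open $U$, the paper uses the complementary upper semicontinuity on the closed set $U^c$) and then invoke the large deviations upper bound from Theorem~\ref{g0}. The additional bookkeeping you describe in your last paragraph is more explicit than what the paper writes, but the substance is identical.
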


\begin{proof}

If $\mu_n\to \mu$ weakly in $\mcal(\C)$ then $\liminf_{n\to
\infty} \mu_n(U^c)\leq \mu(U^c)$. Thus, $A_U$ is a closed set,
both in $\mcal(\C)$ and (with a slight abuse of notation) in
$\mcal(\C\PP^1)$. The upper bound is then immediate from  Theorem
\ref{g0}.

\end{proof}

Unfortunately, the large deviation principle is not quite strong
enough to provide complementary lower bounds. Indeed, the set
$$A_U^o=\{\mu\in\mcal(\C): \mu(U)=1\}$$
has empty interior for any set $U\neq \CP^1$,
and the large deviations lower bound is $-
\infty$.  The best one can obtain from the LDP is that, with $U$ closed and
the
set
$$A_U^{p,o}=\{\mu\in\mcal(\C): \mu(U)>p\}\,,$$
one has by a similar analysis
\begin{equation}
\label{eq-of3} \lim_{p\nearrow 1}
\liminf_{N\to\infty}\frac{1}{N^2}\log \PR_N(A_U^{p,o}) \geq
-\inf_{\mu\in A_U^o} \tilde I^{h,K}(\mu)\,.
\end{equation}

The constrained infimum $\inf_{\mu\in A_U} \tilde I^{h,K}(\mu)$ is
achieved by a measure $\nu_{U, h, K}$, which may be regarded as
a relative weighted equilibrium measure with respect to the two
independent sets $U, K$. In general it is impossible to evaluate
numerically. In a special case, we can however evaluate it. With $r<1$,
let
$U^c = \bar B_r\subset \C$ be the closed ball of radius $r$ centered at
the origin.
Set
$$A_r=\{\mu\in {\mcal}(\C): \mu(\bar B_r)=1\}\,.$$

\begin{cor} For the Kac-Hammersley  ensemble, and for $r < 1$,  we have
\begin{equation}
\label{eq-of2} \limsup_{N\to\infty}\frac{1}{N^2}\log \PR_N(A_r)
\leq \log r\,.
\end{equation}
\end{cor}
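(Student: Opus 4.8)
The plan is to deduce the bound from Corollary \ref{cor-hole1} applied to $U = \C \setminus \bar B_r$ (so that $U^c = \bar B_r$ and $A_r = A_U$), which reduces everything to the computation
\[
\inf_{\mu \in A_r} \tilde I^{h,K}(\mu) = -\log r
\]
for the Kac--Hammersley data $K = S^1$, $\phi \equiv 0$. First I would write $\tilde I^{h,K} = I^{h,K} - E_0(h)$ and recall from the discussion following \eqref{eq-ofer1} that $E_0(h) = \frac12 \log \mbox{Cap}_{\omega_h}(S^1)$; for the flat weight on the unit circle this capacity is $1$, so $E_0(h) = 0$ and $\tilde I^{h,K} = I^{h,K}$. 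Using the local formula \eqref{IHKLOCAL}, which is legitimate since $K = S^1 \subset \C$, the problem becomes to minimize
\[
-\Sigma(\mu) + \sup_{z \in S^1}\Big( 2\int_\C \log|z-w|\, d\mu(w) \Big)
\]
over all $\mu \in \mcal(\C)$ supported in $\bar B_r$.

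The key step is the explicit minimization. For a measure $\mu$ supported in $\bar B_r$, the natural candidate for the minimizer is $\mu = \delta_0$, the point mass at the origin: intuitively, once the zeros are forced to lie in a small disc $\bar B_r$ with $r < 1$, the logarithmic repulsion between them (which wants them spread out) is dominated by the term $\sup_{z\in S^1}\{\cdots\}$, and the cheapest thing to do is to collapse all the mass to the center. For $\mu = \delta_0$ one has $\Sigma(\delta_0) = \log|0-0| = -\infty$, which is a problem, so instead I would work with the uniform measure $\mu_\rho$ on the circle $|w| = \rho$ for $\rho \le r$ and let $\rho \to 0$: then $\Sigma(\mu_\rho) = \log \rho$ and, since $|z| = 1 > \rho$, by the mean value property $\int_\C \log|z-w|\, d\mu_\rho(w) = \log|z| = 0$ for all $z \in S^1$. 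Hence $I^{h,K}(\mu_\rho) = -\log\rho + 0 \to +\infty$ as $\rho \to 0$ — this is the wrong direction, so $\delta_0$ is actually the \emph{worst} choice, not the best.

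This tells me the competition runs the other way: the entropy term $-\Sigma(\mu)$ \emph{rewards} spreading the mass out, so the infimum is attained by pushing as much mass as possible toward $\partial B_r = \{|w| = r\}$, i.e. the minimizer should be the uniform (Haar) measure $\mu_r$ on the circle of radius $r$. For this measure, $\Sigma(\mu_r) = \log r$, and again $\int_\C \log|z-w|\, d\mu_r(w) = \log|z| = 0$ for $z \in S^1$ since $r < 1$. Therefore $I^{h,K}(\mu_r) = -\log r$, giving the upper bound $\log r$ in \eqref{eq-of2} once we show $\mu_r$ is optimal. To prove optimality of $\mu_r$ among all $\mu \in \mcal(\bar B_r)$, I would argue that $\Sigma(\mu) \le \log r$ for every such $\mu$ — this is the statement that the logarithmic capacity of $\bar B_r$ is $r$ and that its equilibrium measure $\mu_r$ maximizes $\Sigma$ over $\mcal(\bar B_r)$, a classical fact from potential theory — while the second term $\sup_{z\in S^1} 2\int \log|z-w|\,d\mu(w)$ is bounded below using subharmonicity of $w \mapsto \log|z-w|$ together with the constraint $\supp\mu \subset \bar B_r$ and the fact that for $|z|=1$ the function $w\mapsto \log|z-w|$ is harmonic on $B_1 \supset \bar B_r$, hence its $\mu$-average is governed by boundary values; in fact $\int_\C \log|z-w|\,d\mu(w) \ge \min_{|w|\le r}\log|z-w| = \log(1-r)$, but more sharply, combining with the energy term one shows the sum is minimized at $\mu_r$ with value $\log r$. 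The main obstacle is making this last optimality argument clean: one must handle the interaction between the strictly concave functional $-\Sigma$ and the convex, non-smooth functional $\mu \mapsto \sup_{z\in S^1} U^\mu(z)$, ideally by a direct comparison or an Euler--Lagrange / variational argument showing $\mu_r$ is the unique critical point, rather than estimating the two terms separately (which only gives $\le \log r + $ error). I expect the cleanest route is: for any $\mu \in \mcal(\bar B_r)$, symmetrize by rotational averaging (which only decreases $-\Sigma$ by concavity and does not change $\sup_{S^1} U^\mu$ by symmetry of $S^1$), reducing to rotationally invariant $\mu$; for such $\mu$, $U^\mu(z) = \int \log(\max(|z|,|w|))\,d\mu(w) = 0$ on $S^1$ identically, so the second term vanishes and one is left to minimize $-\Sigma(\mu)$, i.e. maximize $\Sigma(\mu)$, over rotationally invariant probability measures on $\bar B_r$, whose maximizer is visibly $\mu_r$ with $\Sigma(\mu_r) = \log r$, completing the proof.
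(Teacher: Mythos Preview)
Your final ``cleanest route'' is correct and is essentially the paper's own argument: apply Corollary \ref{cor-hole1}, specialize to $K=S^1$, $\phi\equiv 0$ with $E_0(h)=0$, reduce by rotational symmetrization to rotation-invariant $\mu\in\mcal(\bar B_r)$, observe that for such $\mu$ the potential $U^\mu$ vanishes identically on $S^1$ (so the $\sup$ term drops out), and then maximize $\Sigma$ over $\mcal(\bar B_r)$, which gives $\log r$ at the uniform measure on $\{|w|=r\}$.

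One small correction: rotational averaging does \emph{not} in general leave $\sup_{z\in S^1} U_h^\mu(z)$ unchanged; rather, it can only decrease it, since $\sup_z \frac{1}{2\pi}\int U^{R_\theta\mu}(z)\,d\theta \le \frac{1}{2\pi}\int \sup_z U^{R_\theta\mu}(z)\,d\theta = \sup_z U^\mu(z)$. Fortunately the inequality points the right way, so your reduction to symmetric measures still goes through. (The paper phrases the same reduction slightly differently, invoking convexity of the full functional $I^{h,K}$ from Lemma \ref{CONVEX} rather than treating the two terms separately.) The exploratory detour through $\delta_0$ and the separate bound $\int\log|z-w|\,d\mu\ge\log(1-r)$ can be deleted.
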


\begin{proof}
    By Corollary \ref{cor-hole1},
\begin{equation}
\label{eq-of1}
\limsup_{N\to\infty}\frac{1}{N^2}\log \PR_N(A_r)
\leq -\inf_{\mu\in A_r} \tilde I^{h,K}(\mu)\,.
\end{equation}

We specialize the last expression in the case of the Kac-Hammersley
ensemble: written in
the affine chart around $0$, we have
$$ \tilde I^{h,K}(\mu)=-\Sigma(\mu)+2\sup_{z\in S^1}
\int_\C \log |z-w|d\mu(w)\,,$$
where we used that for any $R\geq 1$,
$$\inf_{\mu\in A_R}I^{h,K}(\mu)=
-\Sigma(\nu)+2\int_\C \log|1-w|d\nu(w)=0\,,$$
with $\nu=\delta_{S^1}$  the uniform distribution on $S^1$.

Fix $r<1$. For given $\mu\in A_r$, let $\tilde \mu$ denote the radial
 symmetrization of $\mu$, that is, for any
measurable
$A\subset \C$,
$$\tilde \mu(A)=\frac{1}{2\pi}\int_0^{2\pi}\int {\bf 1}_{ze^{i\theta} \in A}
d\mu(z) d\theta\,.$$
Due to the convexity of
$A_r$ and of $I^{h,K}(\cdot)$,
the minimizer $\mu^*$ in the right side of \eqref{eq-of1} is
radially symmetric, i.e. $\tilde \mu^*=\mu^*$,
and  supported in $A_r$.
Using the identity, valid
for any $s\leq 1$,
\begin{equation}
\label{patsats}
\int_0^{2\pi} \log  |1-se^{i\theta}|d\theta=0\,,
\end{equation}
we thus obtain
$$\inf_{\mu\in A_r} \tilde I^{h,K}(\mu)=
[\inf_{\mu\in A_r, \mu=\tilde \mu} -\Sigma(\mu)]+\Sigma(\delta_{S^1})
=
\inf_{\mu\in A_r, \mu=\tilde \mu} -\Sigma(\mu)
\,.$$

For $\mu\in A_r$ with $\mu=\tilde \mu$, write
$\mu=\rho(dr)\times d\theta$, with $\rho\in \mcal([0,r])$. Then,
\begin{eqnarray*}
\Sigma(\mu)&=&\frac{1}{4\pi^2}\iint_{0}^{2\pi}
\iint_{0}^r \log|se^{i\theta}-s'e^{i\theta'}|\rho(ds)\rho(ds')
\\
&=&
\frac{1}{2\pi}\int_{0}^{2\pi}
\iint_{0}^r \log|s-s'e^{i\theta'}|\rho(ds)\rho(ds')\\
&=& -\iint_0^r (\log s')[2{\bf 1}_{s'>s}+{\bf 1}_{s=s'}]\rho(ds)\rho(ds')
\,,
\end{eqnarray*}
where we used \eqref{patsats} in the last equality. The last
expression is maximized (over $\rho\in\mcal([0,r])$) at
$\rho_r=\delta_r$.

\end{proof}

\subsection{Discussion of the proof}

Functions
somewhat similar to (\ref{IGREEN}) or (\ref{IHKLOCAL})  arise as
rate functions in large deviations problems for empirical
measures of eigenvalues of random matrices (see e.g.
\cite{BG,BZ}). In particular, much of the analysis of the energy
term $\ecal_h(\mu)$ can be carried over from the eigenvalue
setting and from known results in classical weighted potential
theory  on $\C$ (see \cite{ST}). We recall that  the weighted
equilibrium measure of $K$ with respect to the weight $e^{- Q}$ is
the unique maximizer in $\mcal(K)$ of the weighted energy
function,
\begin{equation} \label{SIGMAQK} \Sigma_{Q, K}(\mu) = \int_K
\int_K \log \left(|z - w| e^{- Q(z)} e^{- Q(w)}\right) d\mu(w)
d\mu(z). \end{equation} We observe that $Q = \frac{\phi}{2}$ in
the global setting, i.e. the weight is essentially a Hermitian
metric.

However, the (non-differentiable) sup function $J^{h, K}(\mu) :
= \sup_K U^{\mu}_{h}$ is quite different from, and somewhat more
difficult than,
 the linear functions such as $\int x^2 d\mu$ which occur in
the eigenvalue setting.
Under Assumption (\ref{REGULAR}), we
show that it is convex and continuous on $\mcal(\CP^1)$ with
respect to weak convergence (which, due to the compactness of
$\CP^1$, is equivalent to  vague convergence) of
probability measures (Proposition \ref{LSCJG}).
The continuity also uses the fact that
the Green's function $G_h$ is bounded above on  $\CP^1$.

It is not obvious that the minimizer of $I^{h, K}$ should be the
same as the minimizer of (\ref{SIGMAQK}). This is proved in
Proposition \ref{GLOBALMAX}. The main differences are that (i)
$\ecal_h$ is not constrained to measures supported on $K$; (ii)
the second $\sup_K$ term is additional to the energy. In
Proposition \ref{GLOBALMAX}, we show that $\nu_{h, K}$ minimizes
both $- \ecal_h$ and $J^{h, K}$.

Besides potential theory, an important ingredient in the proof of
Theorem \ref{g0} is a formula for the joint probability current of
zeros when $\pcal_N$ is endowed with the Gaussian measure derived
from the inner product (\ref{INNERPRODUCT}). A novelty of our
presentation is that we derive the joint probability currently in
a natural way from the associated Fubini-Study probability measure
on $\PP \pcal_N$.
In the following,  we work in the standard chart, i.e.
$(\C)^{(N)}$. Let
 $\Delta(\zeta) = \prod_{i < j} (\zeta_i - \zeta_j)$
 denote the Vandermonde determinant,  $s_\zeta(\cdot)\in \pcal_N$
 the polynomial with zero set $\{\zeta_1,\ldots,\zeta_N\}$,
 and $d^2 \zeta = d\zeta \wedge d \bar{\zeta}$ on $\C$.

 In the following, and hereafter, we often use the following
 identity:

 \begin{equation} \label{LOGMUZ}
 \int_{\CP^1} G_{h}(z,w) dd^c \log ||s_{\zeta} (w)||_{h^N}^2 = N
\int G_h (z,w) d\mu_{\zeta}(w) = N U^{\mu_{\zeta}}_h(z),
\end{equation} which follows from the definitions (\ref{ZN}) and (\ref{GHDEF})
and from the Poincar\'e-Lelong formula (see (\ref{PL})).

\begin{prop} \label{FSVOLZETA2intro}
The joint probability current (\ref{JPCDEF}) (see also
(\ref{JPD})) is given in the affine chart $(\C)^N \subset
(\CP^1)^N$ by
\begin{eqnarray}
    \label{eq-030209b}
    \vec K_n^N(\zeta_1, \dots, \zeta_N) & = & \frac{1}{Z_N(h)}
\frac{|\Delta(\zeta_1, \dots, \zeta_N)|^2  d^2 \zeta_1 \cdots d^2
\zeta_N}{\left(\int_{\CP^1} \prod_{j = 1}^N |(z - \zeta_j)|^2 e^{-
N \phi(z)}  d\nu(z) \right)^{N+1}}
 \\
\label{eq-030209d} & = & \frac{1}{\hat{Z}_N(h)} \frac{\exp \left(
\sum_{i < j} G_{h}(\zeta_i, \zeta_j) \right) \prod_{j = 1}^N e^{-
2 N \phi(\zeta_j)} d^2 \zeta_j  }{\left(\int_{\CP^1} e^{N
\int_{\CP^1} G_{h}(z,w) d\mu_{\zeta}} d\nu(z) \right)^{N+1}}.
\end{eqnarray}
where  $G_h$  is the Green's function (\ref{GHDEF}). Also,
$Z_N(h)$ and  $\hat{Z}_N(h)$  are  normalizing constants,
defined so that the measure on the left side  has
mass one.
\end{prop}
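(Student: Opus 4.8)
The plan is to compute the pushforward $\dcal_* dV_N^{FS}$ directly, starting from an explicit description of the Fubini--Study volume form on $\PP\pcal_N \simeq (\CP^1)^{(N)}$ and transporting it through the coefficient parametrization. First I would fix the affine chart and write a polynomial $s_\zeta$ of degree $N$ with zeros $\zeta_1,\dots,\zeta_N$ as $s_\zeta(z) = c\prod_j(z-\zeta_j)$; in homogeneous coordinates on $\PP\pcal_N \cong \PP^N$ this identifies the divisor map $\dcal$ with the map sending the projective point $[a_0:\cdots:a_N]$ (the coefficient vector) to the unordered root set. The Fubini--Study metric on $\PP H^0(\CP^1,\ocal(N))$ induced by the inner product $G_N(h,\nu)$ of \eqref{INNERPRODUCT} has a standard Kähler potential: in an affine slice where one coefficient is normalized, $\om_{FS} = dd^c \log \|s\|^2_{G_N(h,\nu)}$, and its top exterior power $\frac{1}{N!}\om_{FS}^N$ is $dV_N^{FS}$. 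So the whole computation reduces to (i) expressing $\|s_\zeta\|^2_{G_N(h,\nu)} = \int_{\CP^1}\prod_j|z-\zeta_j|^2 e^{-N\phi(z)}\,d\nu(z)$ in the local frame, which is exactly the denominator appearing in \eqref{eq-030209b}, and (ii) computing the Jacobian of the change of variables from the coefficients $(a_1,\dots,a_N)$ (in the normalization $a_0=1$, say) to the roots $(\zeta_1,\dots,\zeta_N)$.

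The key step is the Jacobian computation in (ii). Writing $s_\zeta(z) = \prod_j(z-\zeta_j) = z^N + a_1 z^{N-1} + \cdots + a_N$, the coefficients $a_k = (-1)^k e_k(\zeta)$ are the elementary symmetric functions, and the Jacobian determinant of $(\zeta_1,\dots,\zeta_N)\mapsto(e_1,\dots,e_N)$ is the classical Vandermonde $\prod_{i<j}(\zeta_i-\zeta_j) = \Delta(\zeta)$ up to sign. Hence $|{\rm Jac}|^2 = |\Delta(\zeta)|^2$, and since we are pushing forward a measure we get $\bigwedge_k \frac{i}{2}da_k\wedge d\bar a_k = |\Delta(\zeta)|^2 \bigwedge_j d^2\zeta_j$ (with the $S_N$-quotient absorbed because $\dcal$ parametrizes unordered sets and the Vandermonde vanishes on the diagonal). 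Then I must check that $dV_N^{FS}$, computed as $\frac{1}{N!}(dd^c\log\|s\|^2)^N$ in the affine coordinates $a = (a_1,\dots,a_N)$, equals $\big(\int_{\CP^1}\|s_a\|^2\big)^{-(N+1)}$ times Lebesgue measure $\prod_k d^2 a_k$ — this is the standard fact that the Fubini--Study volume on $\PP^N$ with metric from a Hermitian inner product $\langle\cdot,\cdot\rangle$, written in an affine chart, has density $\pi^{-N}/\langle v,v\rangle^{N+1}$ (up to a constant absorbed into $Z_N(h)$); here $\langle v,v\rangle = \|s_a\|^2_{G_N(h,\nu)}$. Combining these two computations gives \eqref{eq-030209b}.

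For the second expression \eqref{eq-030209d} I would simply rewrite \eqref{eq-030209b} using the local formula \eqref{GHFORMintro} for the Green's function: from $G_h(\zeta_i,\zeta_j) = 2\log|\zeta_i-\zeta_j| - \phi(\zeta_i) - \phi(\zeta_j) + E(h)$ one gets
\[
\exp\Big(\sum_{i<j}G_h(\zeta_i,\zeta_j)\Big) = |\Delta(\zeta)|^2\, \exp\Big(-(N-1)\sum_j\phi(\zeta_j)\Big)\, e^{\binom{N}{2}E(h)},
\]
so $|\Delta(\zeta)|^2 \prod_j d^2\zeta_j = \exp(\sum_{i<j}G_h(\zeta_i,\zeta_j))\prod_j e^{(N-1)\phi(\zeta_j)} d^2\zeta_j \cdot e^{-\binom{N}{2}E(h)}$. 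To match the stated numerator with its factor $\prod_j e^{-2N\phi(\zeta_j)}$, I would move a factor $\prod_j e^{-N\phi(\zeta_j)}\cdot\prod_j e^{-(N+1)\phi(\zeta_j)}$ appropriately: the $(N+1)$st power in the denominator contributes, via \eqref{LOGMUZ}, $\int_{\CP^1}\prod_j|z-\zeta_j|^2 e^{-N\phi(z)}d\nu(z) = e^{N U^{\mu_\zeta}_h(\cdot)}\!$-type expression, i.e. $\int_{\CP^1}e^{N\int G_h(z,w)d\mu_\zeta(w)}d\nu(z)$ after pulling out the $z$-independent and $\zeta$-dependent constants $e^{-\phi(z)}$, $\prod_j e^{\phi(\zeta_j)}$, $E(h)$ factors, which are then reabsorbed into the new normalizing constant $\hat Z_N(h)$. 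All such $\zeta$-independent factors, including powers of $\pi$, $e^{E(h)}$ and $N$, disappear into $Z_N(h)$ and $\hat Z_N(h)$, which are defined as whatever makes the total mass one.

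The main obstacle I anticipate is bookkeeping rather than conceptual: tracking the exact power of $e^{-\phi(\zeta_j)}$ that survives in \eqref{eq-030209d} (the paper's ``$e^{-2N\phi(\zeta_j)}$'') once one has simultaneously used \eqref{GHFORMintro} in the numerator and inside the $(N+1)$st-power denominator — one must be careful that the $\phi$-factors pulled out of the denominator integral are raised to the power $N+1$ and that the $-(N-1)\phi$ from the Vandermonde-to-Green's-function conversion combines with them correctly, and that the leftover $\zeta$-dependence is precisely cancelled so that only $\exp(\sum G_h)$ and $\prod e^{-2N\phi(\zeta_j)}$ remain. The potential-theoretic identity \eqref{LOGMUZ} and the Poincaré--Lelong formula are exactly what license rewriting $\int_{\CP^1}\prod_j |z-\zeta_j|^2 e^{-N\phi(z)}d\nu(z)$ as $\mathrm{const}\cdot\int_{\CP^1} e^{N\int G_h(z,w)d\mu_\zeta(w)}d\nu(z)$, so once the algebra is organized around that identity the two displayed forms should fall out. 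A secondary point to verify is that the Fubini--Study density formula and the coefficient-to-root change of variables are valid on the open dense set where the $\zeta_j$ are distinct and finite, and that the resulting measure extends across the (measure-zero, because $|\Delta|^2$ vanishes there) diagonal and the hyperplane at infinity, so that \eqref{eq-030209b} genuinely represents $\dcal_* dV_N^{FS}$ as a measure on all of $(\CP^1)^{(N)}$.
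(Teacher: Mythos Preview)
Your proposal is correct and follows essentially the same route as the paper: compute the Fubini--Study volume in affine coefficient coordinates (density $\|s\|_{G_N}^{-2(N+1)}$ times Lebesgue), apply the Vieta--Vandermonde Jacobian $|\Delta(\zeta)|^2$ to pass to root coordinates, and then convert to the Green's-function form using the local expression \eqref{GHFORMintro} in the numerator and the identity \eqref{LOGMUZ} in the denominator. The only cosmetic difference is that the paper tracks the normalizing constant explicitly via the change-of-basis matrix $\acal_N(h,\nu)$ from monomials to a $G_N(h,\nu)$-orthonormal basis (see \eqref{ACALH} and \eqref{VOLA}), whereas you absorb it directly into $Z_N(h)$; and for \eqref{eq-030209d} the paper organizes the $\phi$-bookkeeping exactly as you anticipate, via Lemma~\ref{JPDINV} and Lemma~\ref{NORMSUPHI}, arriving at the residual factor $\prod_j e^{-2\phi(\zeta_j)}\,d^2\zeta_j$ (which is the global form $\kappa_N$ of Lemma~\ref{INVAR}).
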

The second expression (\ref{eq-030209d}) is invariantly defined.
We will see that
\begin{equation} \label{NORMALIZINGCONSTANTS} \left\{ \begin{array}{ll} Z_N(h) = |\det \acal_N(h)|^{-2} =
Vol(\bcal^2(L^N, h^N)), & \mbox{see}\;  (\ref{ACALH})
\\ & \\
\hat{Z}_N(h)  = |\det \acal_N(h)|^{-2} e^{- (-\frac{1}{2}N (N - 1)
+N (N + 1)) E(h)}, & \mbox{see }  \;  (\ref{EH}). \end{array}
\right.
\end{equation}
Here, $\acal_N(h)$ is the change of basis matrix from the
monomials $z^j$ to an orthonormal basis for the inner product $G_N
(h, \nu)$ on $H^0(\CP^1, \ocal(N))$. In Lemma \ref{APPROXRATE}, we
further rewrite the expression for $\vec K_n^N(\zeta_1, \dots,
\zeta_N)$ as a functional $I_N^{h, \nu} (\mu_{\zeta})$ on the
measures $\mu_{\zeta}$. The rate function $I$ is then extracted
from $I_N$ as $N \to \infty$.

To complete the calculation, we need to determine the logarithmic
asymptotics of $\hat{Z}_N(h)$.

\begin{lem} \label{NC}
 We have, $$\lim_{N \to \infty}
\frac{1}{N^2} \log \hat{Z}_N(h) = \frac12
\log \mbox{Cap}_{\omega_h} (K).
$$
    \end{lem}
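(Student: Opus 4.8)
The plan is to compute the large-$N$ asymptotics of $\log\hat Z_N(h)$ by going back to the algebraic description of $Z_N(h)=|\det\acal_N(h)|^{-2}$ in \eqref{NORMALIZINGCONSTANTS} and recognizing $\det\acal_N(h)$ as (the inverse square root of) the Gram determinant of the monomials $1,z,\dots,z^N$ in the inner product $G_N(h,\nu)$. First I would write
$$
\frac{1}{N^2}\log Z_N(h) = -\frac{1}{N^2}\log\bigl|\det[\langle z^i,z^j\rangle_{G_N(h,\nu)}]_{0\le i,j\le N}\bigr|,
$$
and then relate this Gram determinant to the transfinite-diameter–type quantity $\prod_{i<j}|\zeta_i-\zeta_j|^2 e^{-N\phi(\zeta_i)}e^{-N\phi(\zeta_j)}$ integrated against $d\nu^{\otimes(N+1)}$, via the classical Heine/Vandermonde identity $\det[\langle z^i,z^j\rangle] = \frac{1}{(N+1)!}\int |\Delta(\zeta_0,\dots,\zeta_N)|^2\prod_j e^{-N\phi(\zeta_j)}d\nu(\zeta_j)$. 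Taking $\frac{1}{N^2}\log$ of this and passing to the limit is exactly the statement that the weighted transfinite diameter of $K$ equals the weighted capacity: this is the content of the Bernstein–Markov / weighted potential theory results (the Bernstein–Markov condition \eqref{BM} is precisely what makes the $\nu$-integral asymptotics insensitive to $\nu$ and governed by $\sup_K$), and under the regularity assumption \eqref{REGULAR} the limit is $\Sigma_{Q,K}(\nu_{Q,K})$ with $Q=\phi/2$. I would cite the corresponding statements in \cite{ST} (Chapter III, on the weighted Fekete/Leja points and the identity between weighted transfinite diameter and weighted capacity) and \cite{B2,BB}.

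Next I would convert this classical-potential-theory limit into the Green's-capacity normalization appearing in the statement. By the local formula \eqref{GHFORMintro} for $G_h$ and the definitions \eqref{GEN}, \eqref{IGREEN}, one has the identity relating $\ecal_h$ on $\mcal(K)$ to the weighted logarithmic energy $\Sigma_{Q,K}$ up to the additive constant $E(h)$ coming from \eqref{EH}; correspondingly $\mbox{Cap}_{\omega_h}(K)$ differs from the weighted capacity $\mbox{Cap}_h$ of classical potential theory by a factor involving $E(h)$. I would then combine this with the explicit relation
$$
\frac{1}{N^2}\log\hat Z_N(h) = \frac{1}{N^2}\log Z_N(h) + \Bigl(\tfrac12 N(N-1) - N(N+1)\Bigr)\frac{E(h)}{N^2}
$$
from \eqref{NORMALIZINGCONSTANTS}, so that the $E(h)$-terms from the change of normalization and from the passage between weighted log-capacity and Green's capacity cancel, leaving exactly $\frac12\log\mbox{Cap}_{\omega_h}(K)$. (The bookkeeping of the $N(N-1)$ versus $N(N+1)$ coefficients and the corresponding $-\frac12\ecal_h$ versus $+\,U^\mu_h$ split in \eqref{IGREEN} is what produces the factor $\frac12$ rather than, say, a factor $1$.)

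The main obstacle is the existence and identification of the limit $\lim_N\frac{1}{N^2}\log Z_N(h)$: proving that the Gram/Vandermonde integral has the stated logarithmic asymptotics requires both a lower bound (restricting the integral to a neighborhood of a configuration of weighted Fekete points, using regularity \eqref{REGULAR} so that such points carry positive $\nu$-mass on every scale) and an upper bound (using the Bernstein–Markov inequality \eqref{BM} to replace $\int_K|\cdot|^2d\nu$ by $\sup_K|\cdot|^2$ up to a sub-exponential factor $e^{o(N)}$, whose contribution to $\frac{1}{N^2}\log$ vanishes). Both directions are standard in weighted potential theory but must be invoked in the precise form of \cite{ST,B2,BB}; once that limit is in hand, everything else is the algebraic identity \eqref{NORMALIZINGCONSTANTS} together with the dictionary between weighted log-capacity and Green's capacity established in \S\ref{GREEN}--\S\ref{WEM}. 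I would also remark that, alternatively, Lemma \ref{NC} follows a posteriori from Theorem \ref{g0} together with $E_0(h)=\tfrac12\log\mbox{Cap}_{\omega_h}(K)$ of Lemma \ref{NC}'s companion statement, but since $\hat Z_N(h)$ feeds into the proof of Theorem \ref{g0} via Lemma \ref{APPROXRATE}, the direct potential-theoretic argument above is the one to carry out.
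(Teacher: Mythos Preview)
Your direct approach via the Gram--Heine identity and weighted transfinite diameter is correct and is essentially the route taken in the references \cite{B,B2,BB}; the paper even remarks that Lemma~\ref{NC} ``gives an alternative to the proof in \cite{BB} in the special case at hand.'' So there is no gap in your argument.

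However, the paper's own proof is genuinely different and in fact is precisely the route you mention in your last paragraph and then dismiss as circular. The key point you are missing is that the circularity is only apparent. In \S\ref{LDP} the upper and lower bound estimates are proved for the \emph{unnormalized} integral $\int e^{-N^2 I_N(\mu_\zeta)}\kappa_N$, carrying the quantity $\Theta_N=-N^{-2}\log\hat Z_N(h)$ along as an undetermined additive term; nothing in those arguments uses the value of $\lim\Theta_N$. Only after both bounds are in hand does one apply them to the full space $\mcal(\CP^1)$, where $\PR_N(\mcal(\CP^1))=1$ forces $\Theta_N$ to equal minus the unnormalized log--integral. The upper bound then gives $\liminf_N\Theta_N\ge\inf_\mu I^{h,K}(\mu)$ and the lower bound gives the reverse inequality for $\limsup$, whence $\lim_N\Theta_N=\inf_\mu I^{h,K}(\mu)=\tfrac12\log\mbox{Cap}_{\omega_h}(K)$ by Lemma~\ref{GLOBALMAX}. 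Thus Lemma~\ref{NC} is obtained \emph{a posteriori} from the LDP machinery, not as an input to it.

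As for what each approach buys: your Gram/Fekete argument is self-contained and makes direct contact with the classical theory in \cite{ST}, but it requires importing the (nontrivial) equality of weighted transfinite diameter and weighted capacity under Bernstein--Markov, together with the $E(h)$ bookkeeping you describe. The paper's argument avoids all of that by recycling the LDP estimates already proved, at the cost of a slightly delicate interchange of $\limsup/\liminf$ with the truncation parameter $M$ in the upper bound (which goes through because $\mcal(\CP^1)$ is compact and $\ecal_h^M\downarrow\ecal_h$). In particular, the paper's method would transfer unchanged to any setting where one has the analogue of Lemma~\ref{APPROXRATE} and Lemma~\ref{GLOBALMAX}, without needing a separate transfinite-diameter theory.
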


  This limit formula gives an alternative approach to the asymptotics of  $|\det
\acal_N(h)|^2$ from that in  \cite{BB}, in this one dimensional
setting.

\subsection{\label{sec-lowbrow} Sketch of proof for Kac-Hammersley}

  We now
sketch  the proof of Theorem \ref{g0} in the case of the
Kac-Hammersley example. In this case, we do not need   the
geometric language used in the rest of the paper.

Consider the polynomial
$P_N(z)=\sum_{i=0}^N a_i z^i$, where the $a_i$ are independent
Gaussian circular standard
complex random variables. We have
$P_N(z)=a_N \prod_{i=1}^N (z-z_i)$.
Further, conditioned on $a_N$, the variables
$\{b_i\}_{i=0}^{N-1}$ are
independent, circular normal, of variance $|a_N|^{-2}$.

Let $\Delta=\prod_{i< j} |z_i-z_j|$. Then, the Jacobian of the
transformation $\{b_i\}_{i=0}^{N-1}\mapsto \{z_i\}_{i=1}^N$ is
$|\Delta|^2$. On the other hand, with $d\mu_\xi\in\mcal(\C)$
denoting the empirical measure of the zeros of $P_N$,
\begin{eqnarray}
\label{eq-1}
&&|a_N|^2+\sum_{i=0}^{N-1}
 |a_N b_i|^2=\sum |a_i|^2  \\
&=&
(2\pi)^{-1}\int_{S^1} P_N(z)P_N^*(z) dz
\nonumber \\
&=&
\frac{|a_N|^2}{2\pi}
\int_{S^1} \prod|z-z_i|^2 dz
=
\frac{|a_N|^2}{2\pi}
\int_{S^1} e^{2 N \int  \log|z-x|d\mu_\xi(x) } dz
:= |a_N|^2 e^{N{\jcal}_N(\mu_\xi)}\,,\nonumber
\end{eqnarray}
where the integrals are path integral along the unit circle,
and we used the fact that the integrand is real to express
it as an exponential of a real function.
We then have, for any measurable set $A\subset \mcal(\C)$,
\begin{eqnarray}
    \label{eq-200206a}
    \mbox{\bf Prob}_N(d\mu_\xi\in A) &=&\frac{1}{Z_N}
    \int_{z_i: \Delta\neq 0} dz_1\ldots dz_N {\bf 1}_{\{L_N\in A\}}
    \int_0^\infty  y^{N}
    e^{N^2 \Sigma(L_N)- y e^{N {\jcal}_N(L_N)}} dy\nonumber \\
&=&
\frac{1}{\widetilde{Z_N}}
    \int_{z_i: \Delta\neq 0} dz_1\ldots dz_N {\bf 1}_{\{L_N\in A\}}
    e^{N^2  \Sigma(L_N)} e^{-N^2 {\jcal}_N(L_N)}.
\end{eqnarray}
Here, $L_N=N^{-1}\sum_{i=1}^N \delta_{z_i}$,
$\Sigma$ is as in \eqref{SIGMADEF} (with the convention
$\log(0)=0$),
and $Z_N$, $\widetilde{Z_N}$ are normalization constants.

Now, for each fixed $\mu$ for which $\log|z-\cdot|$ is
uniformly integrable for $z\in S^1$, we have that
\begin{equation}
    \label{eq-33}
{\jcal}_N(\mu)=N^{-1}\log
\left(\frac{1}{2\pi}\int_{S_1} \exp(2N \langle \mu,
\log |z-\cdot|\rangle) dz\right)\to_{N\to\infty}
2J(\mu)\,,
\end{equation}
where
$$J(\mu):=  \max_{z\in S^1} \int \log |z-x|d\mu(x)\,.$$
One thus expects, as in \cite{BG,BZ},
to obtain the large deviation principle,
with speed $N^2$ and rate function
    $$2J(\mu)-\Sigma(\mu)-\inf_{\nu \in \mcal(\C)}
    [2J(\nu)-\Sigma(\nu)]\,.$$
    (Compare with
\eqref{IHKLOCAL}, noting that $K=S^1$ and $\phi=1$ on $S^1$ for the
Kac-Hammersley example.)
The technical details of the derivation, however, are best handled
in a more general geometric framework, where relevant properties
of the rate function are more transparent.

\subsection{Generalizations}
We close the introduction with some comments on the generalization
of the results of this article to other \kahler manifolds.  In the
sequel
 \cite{Z}, we use the method of this article to give an explicit
 formula for the joint probability current in the more difficult
 higher genus cases.  In higher genus, the relation between
configuration spaces and sections of line bundles of degree $N$ is
the subject of  Abel-Jacobi theory,  and the formula for the joint
probability current involves such objects as the prime form.  Some
of the geometric discussion of this paper is intended to set the
stage for the higher genus sequel. The results can also be
generalized from  Gaussian ensembles to non-linear ensembles of
Ginzburg-Landau type. For the sake of brevity, we do not carry out
the generalization here.

Another type of generalization to consider is to ensembles of
random holomorphic functions, for instance random holomorphic
functions in the unit disc with various weighted norms or random
entire functions on $\C$. The random analytic functions have an
infinite number of zeros and one apparently needs to make a finite
dimensional approximation to obtain a useful configuration space
and a map to empirical measures.

An interesting question is whether one can  generalize the large
deviations results to higher dimensions. One could consider the
hypersurface zero set of a single random section, or the joint
zero set of a full system of $m$ sections in dimension $m$. The
rate function
$\tilde I^{h, K}$ has a generalization to all dimensions,
and so the large deviations result might admit a generalization.
But the approach of this article, to extract the large deviations
rate function from the joint probability current of zeros, does not
seem to generalize well to higher dimensions. In dimension one,
there exists a simple  configuration space of possible zero sets
of sections,   but in higher dimensions there is no manageable
analogue. It is possible that one can avoid this impasse by
working on the space of potentials $\frac{1}{N} \log ||s||_{h^N}$
rather than on the configuration space of zeros.  But it appears
that one would have to extract the rate function directly from
the potentials without using zeros coordinates. This circle of
problems fits in very naturally with the \kahler potential theory
of \cite{GZ,BB,Ber1,Ber2}, and it would be interesting to explore
it further.

Finally, the authors would like to thank R. Berman,  T. Bloom, A.
Dembo, A. Guionnet and B. Shiffman for helpful conversations.

\section{\label{BACKGROUND} Background}

Polynomials of degree $N$ on $\C$ may be viewed as meromorphic
functions on $\CP^1$ with a pole of order $N$ at $\infty$, or equivalently
as holomorphic sections of the $N$ power $\ocal(N) \to
\CP^1$ of the hyperplane line bundle $\ocal(1) \to \CP^1$, or again as homogeneous holomorphic polynomials
of degree $N$ on $\C^2$. It is useful to employ the geometric language of line
bundles, Hermitian metrics and curvature, and in \cite{Z} this
language is indispensible. We briefly recall the relevant
definitions, referring the reader to \cite{GH} for further details.

We use the following standard notation: $\frac{\partial}{\partial
z} = \frac{1}{2} (\frac{\partial}{\partial x} - i
\frac{\partial}{\partial y} ),  \frac{\partial}{\partial \bar{z}}
= \frac{1}{2} (\frac{\partial}{\partial x} + i
\frac{\partial}{\partial y} ). $ Also, $\partial f =
\frac{\partial f}{\partial z} d z$ and similarly for $\dbar f$.
The Euclidean Laplacian is given by $\Delta = 4 \;
\frac{\partial^2}{\partial z
\partial \bar{z}}$ and $\ddbar = \frac{\partial^2}{\partial z
\partial \bar{z}} d z \wedge d \bar{z}.$
It is often convenient to use  the real operators  $d =
\partial + \dbar, d^c := \frac{i}{4 \pi} (\dbar -
 \partial)$ and   $dd^c = \frac{i}{2\pi}
 \ddbar$. Thus, $dd^c f = \frac{ i}{8 \pi} \Delta f dz \wedge d\bar{z} = \frac{1}{4 \pi} \Delta f dx \wedge dy. $
We will often need the classical formula, \begin{equation}
\label{FUNDSOL} \Delta \left(\frac{1}{2 \pi} \log |z|\right)  =
\delta_0 \iff dd^c (2 \log |z|) = \delta_0 dx \wedge dy.
\end{equation}
Henceforth, we regard $\delta_0$ as a $(1,1)$ current so that
$\delta_0$ and $\delta_0 dx \wedge dy$ have the same meaning.

A smooth Hermitian metric $h$ on a holomorphic line bundle $L$ is
a smooth family $h_z$ of Hermitian inner products on the
one-dimensional complex vector spaces $L_z$. Its Chern form is
defined by
\begin{equation}\label{curvature} c_1(h)= \omega_h : = -\frac{\sqrt{-1}}{2 \pi}\ddbar \log \|e_L\|_h^2\;,\end{equation} where $e_L$ denotes a local
holomorphic frame (= nonvanishing section) of $L$ over an open set
$U\subset M$, and $\|e_L\|_h=h(e_L,e_L)^{1/2}$ denotes the
$h$-norm of $e_L$. We say that $h$ is positive if the (real)
2-form $\omega_h $  is a positive $(1,1)$ form,  i.e.  defines a
\kahler metric. For any smooth Hermitian metric $h$ and local
frame $e_L$ for $L$, we  write $\|e_L\|_h^2 = e^{-\phi} $ (or,  $h
= e^{- \phi}$), and
\begin{equation} \label{DDCPHI} \omega_h = \frac{\sqrt{-1}}{2 \pi} \ddbar \phi  = dd^c
\phi. \end{equation} We  refer to $\phi = - \log ||e_L||_h^2$ as
the potential of $\omega_h$ in $U$, or as the \kahler potential
when $\omega_h$ is a \kahler form.
 We are interested in
general smooth metrics, not only those where $\omega_h$ is
positive; for instance, our methods and results apply in the case
where
 $\phi = 0$ (i.e. the metric is flat) on the support of  $d\nu$.
The metric $h$ induces Hermitian metrics $h^N$ on
$L^N=L\otimes\cdots\otimes L$ given by $\|s^{\otimes
N}\|_{h^N}=\|s\|_h^N$.
 The
 $N$-dependent factor $e^{- N \phi }$ is then the local expression of   $h^N$
 in the local frame $e^N$. We will only be considering the line bundles $\ocal(N) \to \CP^1$
 in this paper.

We now specialize to the hyperplane line bundle $\ocal(1) \to
\CP^1$ and its powers. We recall that $\CP^1$ is the set of lines
through $0$ in $\C^2$. The line through $(z_0, z_1)$ is denoted
$[z_0, z_1]$, which are the the homogeneous coordinates of the
line.
 In the case of $\CP^1$ there exists a single holomorphic line
 bundle $L^N$ of each degree. One writes $L = \ocal(1)$ and $L^N =
 \ocal(N)$. The bundle $\ocal(1)$ is dual to the tautological line
 bundle $\ocal(-1) \to \CP^1$ whose fiber at a point $[z_0, z_1] \in \CP^1$
 is the line $[z_0, z_1]$ in $\C^2$.  The line bundle $\ocal(1)$
  is defined by two charts $U_1 = \CP^1 \backslash \{\infty\}$
 $(z_0 \not= 0)$
 and $U_2 = \CP^1 \backslash \{0\}$ ($z_1 \not= 0$). A frame (nowhere vanishing
 holomorphic section) of $\ocal(-1)$ over $U_1$ is
given  by $e_1^*([z_0, z_1])= (1, \frac{z_1}{z_0})$, and over
$U_2$ by $e_2^*([z_0, z_1]) = (\frac{z_0}{z_1}, 1)$. The dual
frames are the homogeneous polynomials on $\C^2$ defined by
$e_1(z_0, z_1) = z_0, $ resp. $e_2(z_0, z_1) = z_1.$

 The  potential $\phi$  is only defined relative to a frame, and we will need
 to know how it changes under a change of frame.
  Suppose that $\phi_1$ is the potential
 of $\omega_h$ in the frame $e_1$, i.e. $||e_1([z_0, z_1])||_h^2 = e^{- \phi_1}$. We assume that $h$, hence
 $\phi_1$ is smooth in $U_1$ and we may (with a slight abuse of notation) regard it as
 a function on $U_1$ or on $\C$ in the standard coordinate $[z_0, z_1] \to \frac{z_0}{z_1} = w$.
 In the frame $e_2$ we have the local  potential  $||e_2(([z_0, z_1]) )||_h^2  = e^{- \phi_2}$ for
some $\phi_2 \in C^{\infty}(\CP^1 \backslash \{0\})$, which we
identify with a function on $\C$. On the overlap $\CP^1 \backslash
\{0, \infty\}$ the frames $e_1, e_2$ are related by $e_2 ([z_0,
z_1]) = \frac{z_1}{z_0} e_1 ([z_0, z_1]) $, so $||e_2([z_0, z_1])
||_h^2 = |\frac{z_1}{z_0}|^2 ||e_1([z_0, z_1]) ||_h^2 $. It
follows that $\phi_2 ([z_0, z_1])  = \phi_1([z_0, z_1]) - 2 \log
|\frac{z_1}{z_0}|.$ If we use $w = \frac{z_0}{z_1}$ as a local
coordinate, then $\phi_2(w) = \phi_1(\frac{1}{w}) +  \log |w|^2$.
As an illustration, the \kahler potential of the Fubini-Study
metric on $\ocal(1)$ is given by $\log (1 + |w|^2) = \log (1 +
\frac{1}{|w|^2}) + \log |w|^2$ in the two charts.

An important observation in understanding the global nature of
(\ref{eq-030209d}) is the following:

\begin{lem} \label{INVAR} The $(1,1)$ form $e^{- 2 \phi_1(z)} dz
\wedge d\bar{z}$ in the chart $U_1$ extends to a global smooth
$(1,1)$ form  $\kappa$ on $\CP^1$. In the chart $U_2$ it equals $e^{- 2
\phi_2(z)} dz \wedge d\bar{z}$. \end{lem}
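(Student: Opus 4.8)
The plan is to compute directly how the local expression $e^{-2\phi_1(z)}\,dz\wedge d\bar z$ transforms under the coordinate change between the two standard charts $U_1=\CP^1\setminus\{\infty\}$ and $U_2=\CP^1\setminus\{0\}$, and to observe that it is invariant, hence patches to a global form. First I would recall from the discussion preceding the lemma that on the overlap $U_1\cap U_2=\CP^1\setminus\{0,\infty\}$, if $z$ is the affine coordinate in $U_1$ and $u$ the affine coordinate in $U_2$, then $u=1/z$, and the potentials are related by $\phi_2(u)=\phi_1(1/u)+\log|u|^2$, equivalently $\phi_2(u)=\phi_1(z)-\log|z|^2$. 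I should be a little careful here about which coordinate plays which role, since the excerpt writes $w=z_0/z_1$; the point is simply that the two affine coordinates are reciprocals and the potentials differ by $\pm\log|\cdot|^2$.

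The key computation is then the following. Under $u=1/z$ we have $du=-z^{-2}\,dz$, so
\begin{equation}
du\wedge d\bar u = |z|^{-4}\, dz\wedge d\bar z .
\end{equation}
On the other hand, from $\phi_2(u)=\phi_1(z)-2\log|z|$ we get
\begin{equation}
e^{-2\phi_2(u)} = e^{-2\phi_1(z)}\,|z|^{4}.
\end{equation}
Multiplying these two identities, the factors $|z|^{\pm4}$ cancel and
\begin{equation}
e^{-2\phi_2(u)}\, du\wedge d\bar u = e^{-2\phi_1(z)}\, dz\wedge d\bar z
\end{equation}
on $U_1\cap U_2$. Thus the two locally defined $(1,1)$ forms agree on the overlap. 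Since $\phi_1$ is smooth on $U_1$ and $\phi_2$ is smooth on $U_2$ (this is exactly the smoothness of the Hermitian metric $h$ on $\ocal(1)$, expressed in the two frames $e_1,e_2$, as set up in the paragraphs above the lemma), each local expression is a smooth $(1,1)$ form on its chart, and since $U_1\cup U_2=\CP^1$ they glue to a global smooth $(1,1)$ form $\kappa$ on $\CP^1$. Its expression in $U_2$ is by construction $e^{-2\phi_2(z)}\,dz\wedge d\bar z$, which is the last assertion.

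I do not expect any genuine obstacle here: the statement is essentially a bookkeeping check that the weight $e^{-2\phi}$ transforms with the reciprocal of the Jacobian factor that $dz\wedge d\bar z$ picks up, which is forced because $e^{-\phi}$ is the squared norm of the frame $e_1$ (a section, not a function) and $dz\wedge d\bar z$ transforms like the modulus-squared of a $1$-form. The only point requiring a word of care is the precise form of the transition relation $\phi_2(u)=\phi_1(z)-2\log|z|$ — one must use the change-of-frame formula $e_2=\tfrac{z_1}{z_0}e_1$ recalled just before the lemma, rather than guessing the sign — but once that is in hand the cancellation of the $|z|^4$ factors is immediate. Conceptually, $\kappa$ is nothing but the smooth volume-type form associated to the metric $h^2$ on $\ocal(1)$ twisted against the (minus-)canonical bundle $\ocal(2)=K_{\CP^1}^{-1}$, whose sections are exactly the vector fields $dz\wedge d\bar z$ dualizes; this remark could be added but is not needed for the proof.
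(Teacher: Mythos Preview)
Your proof is correct and follows essentially the same approach as the paper: both compute the pullback of $e^{-2\phi_1(z)}\,dz\wedge d\bar z$ under the transition map $z\mapsto 1/z$, use the relation $\phi_1(1/z)=\phi_2(z)-\log|z|^2$ from the preceding discussion, and observe that the resulting factors of $|z|^{\pm 4}$ cancel. The additional conceptual remark at the end about $\kappa$ as a form associated to $h^2$ twisted by $K_{\CP^1}^{-1}$ is a nice explanation of why the cancellation is forced, though the paper does not include it.
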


\begin{proof}

We need to check its invariance under the change of variables
$\sigma(z) = \frac{1}{z}$. We have,
$$\sigma^* e^{- 2 \phi_1(z)} dz
\wedge d\bar{z} = e^{- 2 \phi_1(\frac{1}{z})} \frac{d z \wedge
d\bar{z}}{|z|^4}. $$ Since $ \phi_1(\frac{1}{z}) = \phi_2(z) -
\log |z|^2,$ this is
$$e^{- 2 \phi_2(z)} e^{2 \log |z|^2} \frac{d z \wedge
d\bar{z}}{|z|^4} = e^{- 2 \phi_2(z)}  d z \wedge d\bar{z} . $$

\end{proof}

\subsection{\label{PLLDEF} Poincar\'e-Lelong formula for the empirical measure of
zeros}

The empirical measure of zeros $Z_s$  (\ref{ZN}) is given by
  (one-dimensional) Poincar\'e-Lelong formula,
\begin{equation} \begin{array}{lll}
\label{PL} Z_s   =  \frac{i}{\pi N} \ddbar \log |f| & =
&\frac{i}{N
\pi} \ddbar \log \|s\|_{h^N} + \omega_h \\ && \\
& = & \frac{2}{N} dd^c \log \|s\|_{h^N} + \omega_h  \;.
\end{array}\end{equation} It is completely elementary in dimension
one.

 \subsection{$dd^c$ Lemma}

 We will need the $dd^c$ Lemma on not-necessarily-positive $(1,1)$
 currents. The $dd^c$ Lemma on forms (cf. \cite{Dem}, Lemma 8.6 of
 Chapter VI) asserts that on a compact \kahler manifold, a
 $d$-closed $(p,q)$ form $u$ may be expressed as $u = dd^c v$
 where $v$ is a $(p-1, q-1)$ form. The same Lemma is true for
 currents, with the change that $v$ is only asserted to be a
 current.

 When $\omega, \omega'$ are two cohomologous positive closed
 $(1,1)$ currents (which on $\CP^1$ simply means
 $\int_{\CP^1} \omega = \int_{\CP^1} \omega'$),
  then one has a regularity theorem: $\omega -
 \omega' = dd^c \psi$ where $\psi \in L^1(\CP^1, \R)$. We refer to
 \cite{GZ}, Proposition 1.4.

\subsection{\label{GFS} Hermitian inner products and Gaussian measures  on $H^0(\CP^1, \ocal(N))$}

We denote by $H^0(\CP^1, \ocal(N))$ the space of holomorphic
sections of $\ocal(N)$. It is well-known that they correspond to
polynomials of degree $N$, which are their local expressions in
the affine chart $U = \CP^1 \backslash \{\infty\}$ (see
\cite{GH}).

As mentioned in the introduction, the data $(h, \nu)$ determine
inner products $G_N(h, \nu)$ on the complex vector spaces
$H^0(\CP^1, \ocal(N))$ (see (\ref{INNERPRODUCT}) and
(\ref{INNERPRODUCTa})). An  inner product on $H^0(\CP^1,
\ocal(N))$ induces  a Gaussian measure on this complex vector
space by the formula,
\begin{equation}\label{gaussian}d\gamma_N(s_N):=\frac{1}{\pi^m}e^
{-|c|^2}dc\,,\quad s_N=\sum_{j=1}^{d_N}c_jS^N_j\,,\quad
c=(c_1,\dots,c_{d_N})\in\C^{d_N}\,,\end{equation} where $d_N=N+1$,
$\{S_1^N,\dots,S_{d_N}^N\}$ is an orthonormal basis for
$H^0(\CP^1, \ocal(N))$, and $dc$ denotes $2d_N$-dimensional
Lebesgue measure.   The measure $\ga_N$ is characterized by the
property that the $2d_N$ real variables $\Re c_j, \Im c_j$
($j=1,\dots,d_N$) are independent Gaussian random variables with
mean 0 and variance $1/2$; equivalently,
$$\E_N c_j = 0,\quad \E_N c_j c_k = 0,\quad  \E_N c_j \bar c_k =
\de_{jk}\,,$$ where $\E_N$ denotes the expectation with respect to
the measure $ \ga_N$.

In \S \ref{FSDEF}, we will define an essentially equivalent
Fubini-Study volume form on the projective space of sections $\PP
H^0(\CP^1, \ocal(N))$.

\section{\label{FSDEF} Joint probability current of zeros and the Fubini-Study volume form}

In this section, we define the principal object of this article,
the joint probability current of zeros. We then prove the first
part \eqref{eq-030209b} of Proposition \ref{FSVOLZETA2intro},
giving the
 formula for  the joint probability current of zeros as the
pull back to configuration space of the Fubini-Study volume form
on the projective space of sections. We the re-write the formula
in terms of the Green's function   in the next section.

\subsection{\label{PLL} The joint probability current of zeros}

The joint probability current of zeros is defined by
\begin{equation}\begin{array}{lll}  \vec K_{N}^N(z^1, \dots, z^{N}):& = & \E(Z_s(z^1) \otimes
Z_s(z^2) \otimes \dots \otimes Z_s(z^{N})).  \end{array}
\end{equation}

It is a current on the configuration space $(\CP^1)^{(N)}$ of $N$ points.
It is the extreme case
 $n = N$ of the $n$-point zero correlation current
\begin{equation} \label{JPCURRENT} \vec K_n^N(z^1, \dots, z^n):= \E(Z_s(z^1) \otimes
Z_s(z^2) \otimes \dots \otimes Z_s(z^n)) \end{equation} on the
configuration space $(\CP^1)^{(n)}$. Recall that
by a current we mean a linear
functional on test forms, i.e. for any test function $\phi_1(z^1)
\otimes \dots \otimes \phi_n(z^n) \in C ((\CP^1)^{(n)}$,
\begin{equation} \big(\vec K_n^N(z^1, \dots, z^n),
\phi_1(z^1) \otimes \dots \otimes \phi_n(z^n)\big) = \E
\left[\big( Z_s,\phi_1\big)\big( Z_s,\phi_2\big)\cdots \big(Z_s,
\phi_n\big)\right].
\end{equation}

\subsection{Fubini-Study formula}

We now present the most useful approach to the joint probability
current of zeros in the case of genus zero.

It is a classical fact that the projective space of sections $\PP
H^0(\CP^1, \ocal(N))$ may be identified with the configuration
space $(\CP^1)^{(N)}$ of $N$ points of $\CP^1$. This essentially
comes down to the elementary fact that a set $\{\zeta_1, \dots,
\zeta_N\}$ determines a line of polynomials $[P_{\zeta}] \in \PP
\pcal_N$ of degree $N$, at least when none of the  zeros occur at
$\infty$.   Viewed as holomorphic sections of $\ocal(N) \to \CP^1$
one can also allow $\infty$ to be a zero and then $N$ points of
$\CP^1$ corresponds to a line of holomorphic sections.

The correspondence $\zeta \to [P_{\zeta}]$ defines a line bundle
\begin{equation} \label{ZCALN} \zcal_N \to (\CP^1)^{(N)}, \;\; (\zcal_N)_{\zeta} = \{[p] \in \pcal_N: \dcal(p) = \zeta \}, \end{equation}
i.e.  the fiber of $\zcal_N$ at $\zeta_1 + \cdots + \zeta_N$ is
the line $\C P_{\zeta}$ of holomorphic sections of $\ocal(N)$ with
the divisor $\zeta = \zeta_1 + \cdots + \zeta_N$. It  is
isomorphic to the bundle $\ocal(1) \to \PP H^0(\CP^1, \ocal(N))$
under  the identification $\PP H^0(\CP^1, \ocal(N))
=(\CP^1)^{(N)}$. One can construct a form representing the first
Chern class $c_1(\zcal_N)$ using a Hermitian inner product on
$\zcal_N$ or equivalently a Hermitian inner product on $H^0(\CP^1,
\ocal(N))$: at a point $\zeta \in (\CP^1)^{(N)}$, the $G_z$-norm
of a vector $P_{\zeta} \in \zcal_{\zeta}$ is $||P_{\zeta}||_G$,
the norm of $P_{\zeta}$ as an element of $H^0(\CP^1, \ocal(N))$.
This is  the Fubini-Study Hermitian metric determined by the inner
product.

 Let us recall the basic
definitions and formulae in the case of the standard inner product
on $\C^{d + 1}$ and $\CP^d$. Let $Z \in \C^{d + 1}$ and let
$||Z||^2 = \sum_{j = 1}^{d + 1} |Z_j|^2. $ In the open dense chart
$Z_0 \not= 0$, and
 in affine coordinates $w_j =
\frac{Z_j}{Z_0}$, the Fubini-Study  volume form is given  by,
$$dVol_{I} = \frac{\prod_i dw_i \wedge d\bar{w}_i}{(1 + ||w||^2)^{d + 1}}. $$
For our purposes, it is more useful to lift this form to
$\C^{d+1}$ under the natural projection, $\pi: \C^{d + 1} - \{0\}
\to \CP^d$.  A straightforward calculation shows that
\begin{equation} \pi^* dVol_{I} = ||Z_0||^2 \frac{
\prod_{j = 1}^{d } dZ_j \wedge d \bar{Z}_j}{||Z||^{2 (d + 1)}},
\end{equation} in the sense that
$$\frac{dZ_0 \wedge d\bar{Z}_0}{||Z_0||^2} \wedge \frac{\prod_{i = 1}^d  dw_i \wedge d\bar{w}_i}{(1 + ||w||^2)^{d +
1}} = \frac{ \prod_{j = 0}^{d } dZ_j \wedge d \bar{Z}_j}{||Z||^{2
(d + 1)}}. $$

We need a more general formula where the inner product $||Z||^2$
is replaced by any Hermitian inner product on $\C^{d + 1}$. We
recall that the space of Hermitian inner products on $V$ is the
symmetric space $GL(d + 1, \C)/ U(d + 1)$. If we identify $V =
\C^d$ and fix the standard inner product $(v, w)$, then any other
inner product has the form $G(v, w) = (P v, w)$ where $P$ is a
positive Hermitian matrix. It has the form $P = A^*A$ where $A \in
GL(d + 1, \C)$.

 Suppose, then, that instead of the standard inner norm $||Z||$ on $\C^{d + 1}$
we are given the norm $||A Z||$ where $A \in GL(d + 1, \C)$.  Then
the Fubini-Study metric becomes  $\ddbar \log ||A Z||^2$. Since the
linear transformation defined by  $A$ is holomorphic, the
associated volume form $dV_A$ is simply the pull-back by $A$ of
the previous form,
\begin{eqnarray}\label{VOLA}
 \pi^* dVol_{A} &=& A^* \frac{(\ddbar \log ||Z||^2)^{d
+ 1}}{\frac{dZ_0 \wedge d\bar{Z}_0}{||Z_0||^2}}  =  \frac{||(A
Z)_0||^2 }{||A Z||^{2 (d + 1)}} A^* \left( \frac{ \prod_{j = 0}^{d
} dZ_j \wedge d \bar{Z}_j}{dZ_0 \wedge d\bar{Z}_0}\right) \\
& = &| \det A|^2 |(A Z)_0|^2 \cdot \left( \frac{\partial}{\partial
Z_0} \wedge \frac{\partial}{\partial \bar{Z}_0} \bot A^* (dZ_0
\wedge d\bar{Z}_0)  \right)^{-1}  \left( \frac{ \prod_{j = 1}^{d }
dZ_j \wedge d \bar{Z}_j}{{||A Z||^{2 (d + 1)}}}\right).
\nonumber
 \end{eqnarray}

We now prove the first part of Proposition \ref{FSVOLZETA2intro}.

\subsection{\label{DETG0} Proof of \eqref{eq-030209b} in
Proposition \ref{FSVOLZETA2intro}}

To prove  \eqref{eq-030209b}, we use (\ref{VOLA}) and   change
variables to zeros coordinates.

We first consider the change of variables in local coordinates on $\CP^1$. We fix the usual affine
chart $U \subset \C$ and let $z$ be the local coordinate. We then have a corresponding local coordinate
system $(\zeta_1, \dots, \zeta_N)$ on $(\CP^1)^N$ which is defined in the chart $(\C)^N$.

We have defined the joint probability current (\ref{JPCURRENT}) as an $(N, N)$ form on configuration
space $(\CP^1)^N$. It pulls back under the $S_N$ cover $(\CP^1)^N \to (\CP^1)^{(N)}$ and we wish to express it
in the local coordinate system $(\zeta_1, \dots, \zeta_N)$ to obtain the formula
in Proposition \ref{FSVOLZETA2intro}. We then write down its density
with respect to the local Lebesgue volume form $d^2 \zeta_1 \cdots d^2 \zeta_N$ of the chart.

To prove the Proposition, we start with the    Newton-Vieta's formula:
\begin{equation} \label{VIETA}  \prod_{j = 1}^N (z - \zeta_j) =
 \sum_{k = 0}^N (-1)^k e_{N - k} (\zeta_1, \dots, \zeta_N)\;\; z^k.
\end{equation}
Here, the  elementary symmetric functions are defined by $$e_j =
\sum_{1 \leq p_1 < \cdots < p_j \leq N} z_{p_1} \cdots z_{p_j}.$$

As mentioned above, the  formula (\ref{VIETA}) defines a map
$(\CP^1)^{(N)} \to \pcal_N$,  which is a section of  the line
bundle $\zcal_N$ (\ref{ZCALN}). It is the section  taking its
values in the polynomials $\sum_{i=0}^N a_i z^i$ for which $a_N =
1$. Since $e_0(\zeta) \equiv 1$,  the linear coordinates are
affine coordinates in the chart $c_0 = 1$, where $c_j$ are
coordinates with respect to the basis $\{z^j\}$.
 We then  change variables from the Lebesgue volume form
$da_1 \wedge d \bar{a}_1 \wedge \cdots \wedge da_{N} \wedge
d\bar{a}_N$ in the affine chart to a volume form in the
coordinates $(\zeta_1, \dots, \zeta_N)$. It is well-known (see
e.g. \cite{LP}) that this change of variables has Jacobian
$|\Delta(\zeta)|^2$ where as above,  $ \Delta(\zeta_1, \dots,
\zeta_N) = \prod_{1 \leq j < k \leq N} (\zeta_k - \zeta_j)$ is the
Vandermonde determinant.

We now express the Fubini-Study  probability measure on $\PP
H^0(\CP^1, \ocal(N))$ in the coordinates $\zeta_j$. The first
problem we face is that the right side of (\ref{VIETA}) expresses
the polynomial on the left side in coordinates with respect to the
basis $\{z^j\}_{j = 0}^N$, which is usually  not an  orthonormal
basis with respect to the inner product (\ref{INNERPRODUCT}). We
need to make the additional change of variables from coordinates
$E_j$  with respect to an orthonormal basis $\{\psi_j\}$ for our
inner product $G_N(h, \nu)$, $ \prod_{j = 1}^N (z - \zeta_j) =
\sum_{\ell = 0}^d  \ecal_{N - \ell} \psi_{\ell}$  to coordinates
$Z_j = (-1)^{N-j} e_{N - j}$ with respect to the monomial basis
$\{z^j\}$.  With no loss of generality, we  assume that the
orthogonal polynomials $\{\psi_j\}$ are enumerated according to
degree, so that $\psi_N$ is the unique polyomial in the basis with
a $z^N$ term. The change of basis matrix $\acal_N(h, \nu) Z =
\ecal$ is given by,
\begin{equation} \label{ACALH} \left(\acal_N^{jk } \right)_{j, k = 0}^N = \left( \langle
z^j, \psi_k \rangle_{G_N(h, \nu)} \right)_{j,k = 0}^N.
\end{equation}

Next we observe  that   \begin{equation} \label{ANZZBAR}
\frac{\partial}{\partial Z_0} \wedge \frac{\partial}{\partial
\bar{Z}_0} \bot \acal_N(h, \nu)^* dZ_0 \wedge d\bar{Z}_0  =
||\acal_N^{00}||^2.  \end{equation} Indeed, $\acal_N^* dZ_0 = \sum_j
\acal_N^{0 j} dZ_j $ and the desired expression is the coefficient
of $d Z_0 \wedge d \bar{Z}_0$ in $ d (\acal_N^*Z)_0 \wedge d
\overline{(\acal_N^*Z)_0}. $ We further observe that
$|\acal_N^{00}|^2$ is a constant independent of $\zeta$. By our
ordering, $\psi_N = k_N z^N + k_{N -1} z^{N - 1} \cdots$ for some
$k_N \not= 0$. Since \begin{equation} \label{kNSTUFF} \prod_j (z -
\zeta_j) = \sum \ecal_{N - j} \psi_j = z^N + e_1(\zeta) z^{N - 1}
+ \cdots, \end{equation} it follows that
\begin{equation} \acal_N^{00} = k_N^{-1}, \;\; \mbox{and that}\;\; 
||(\acal_N(h, \nu) Z)_0||^2 = k_N^{-2}. \end{equation}
Hence, (\ref{ANZZBAR}) equals $ k_N^{-2}, $ and the factors
$|(\acal_N(h, \nu) Z)_0|^2\cdot k_N^2$ cancel.

Combining this evaluation with  (\ref{VOLA}), we see that the pull
back of the Fubini-Study volume form with respect to $G_N(h, \nu)$
to $\C^{N + 1}$ is given by
\begin{equation} \label{COEFF} | \det \acal_N(h, \nu)  |^2
 \left( \frac{ \prod_{j =
1}^{N } dZ_j \wedge d \bar{Z}_j}{{||\acal_N(h, \nu)  Z||^{2 (N +
1)}}}\right).
\end{equation}

We now change variables to zeros coordinates. As mentioned above,
$ \prod_{j = 1}^{d } dZ_j \wedge d \bar{Z}_j = |\Delta(\zeta)|^2
\prod_j d^2 \zeta_j$.  The denominator in (\ref{COEFF}) equals the
sum of the squares of the components of $\acal_N(h, \nu) Z$, which
is $L^2$ norm-squared of $ \prod_{j = 1}^N (z - \zeta_j)$ with
respect to $G_N(h, \nu)$, i.e.
$$||\acal_N(h, \nu)  Z||^{2 (N +
1)} = \left(\int_{\CP^1} \prod_{j = 1}^N  |(z - \zeta_j)|^2 e^{- N
\phi}  d\nu(z) \right)^{N+1}. $$

Further,
$$\begin{array}{lll}| (\acal_N(h, \nu) Z)_0|^2 = |\ecal_0(\zeta)|^2 & = & |\langle \prod_{j =
1}^N (z - \zeta_j), \psi_N \rangle|^2 \\ && \\
& = & \left| \int_{\C}   \prod_{j = 1}^N (z - \zeta_j))
\overline{\psi_N(z)} e^{- N
\phi(z)} d\nu(z) \right|^2 \\ && \\
& = & e^{2 N \int \phi d\mu_{\zeta}} \int_{\C} \psi_N(z) e^{N
\int_M G_h(z,w) d\mu_{\zeta} } e^{- N \phi(z)} d\nu(z)
\end{array}$$

This completes the proof of \eqref{eq-030209b}.

 \qed

We refer to the coefficient of $d^2 \zeta_1 \cdots d^2 \zeta_N$ in
\eqref{eq-030209b} as the joint probability density (JPD) of
zeros:
\begin{equation}\label{JPD}  D_N(\zeta_1, \dots, \zeta_N)= |
    \det  \acal_N(h, \nu) |^2 \frac{|\Delta(\zeta_1, \dots,
\zeta_N)|^2 }{\left(\int_D \prod_{j = 1}^N  |(z - \zeta_j)|^2 e^{-
N \phi}  d\nu(z) \right)^{N+1}}. \end{equation}

\begin{rem} \label{LOCAL}
The elementary symmetric functions $e_j(\zeta)$  of $\zeta =
(\zeta_1, \dots, \zeta_N)$ are natural coordinates in  $\C^{(N)}$,
and a natural holomorphic volume form is given by
\begin{equation} \Omega_{C^{(N)}} = d e_1 \wedge \cdots \wedge de_N\end{equation}
while the corresponding $(N, N)$ form is
\begin{equation} \Omega_{C^{(N)}} \wedge \overline{\Omega}_{C^{(N)}} = d e_1 \wedge d \bar{e_1}
 \wedge \cdots \wedge de_N \wedge d \bar{e_N} = |\Delta(\zeta_1, \dots,
\zeta_N)|^2 d^2 \zeta_1 \cdots d^2 \zeta_N. \end{equation}

\end{rem}

\subsection{\label{INTRINSICJPC} Intrinsic formula for the joint probability current}

 The Fubini-Study form  has an intrinsic geometric interpretation as the
curvature form (\ref{curvature}) for the Hermitian line bundle
$\zcal_N \to (\CP^1)^{(N)}$ equipped with its metric $G(h^N, \nu)$. This is of
independent
geometric interest
and we pause to consider it.

 A  local frame  for $\zcal_N$
 (henceforth we drop the $N$ for notation simplicity)  is a
non-vanishing holomorphic selection of a polynomial $P_{\zeta}$
from the line $\C P_{\zeta}$ of polynomials (or more generally,
holomorphic sections of $\ocal(N) \to \CP^1$) with divisor
$\zeta$. The standard choice is to trivialize $\zcal$ over
$(\C)^N$ using the section  $P_{\zeta}(z) = \prod_{j = 1}^N (z -
\zeta_j) e^N(z)$ where $e(z)$ is the standard  affine frame of
$\ocal(1) \to \CP^1$ over $\C$. In this article, the inner product
$G = G_N(h, \nu)$ is defined by (\ref{INNERPRODUCTa}). It follows
that the curvature $(1,1)$ form of $\zcal$ is given by
\begin{equation} \label{OMEGAZCAL} \omega_{\zcal} = \frac{i}{2} \ddbar \log ||
P_{\zeta}||_{G(h, \nu)}, \end{equation} where $\ddbar$ is the
operator on $(\CP^1)^{(N)}$. Thus, \begin{equation}
\label{KPCONFIG} \Phi_N(\zeta) : = \log || P_{\zeta}||_{G(h, \nu)}
\end{equation} is the \kahler potential for the \kahler form of configuration
space, and the volume form is given by
\begin{equation} \label{CONFIGJPD} dV_{FS, G_N(h, \nu)} = \left(\frac{i}{2} \ddbar \log ||
P_{\zeta}||_{G_N(h, \nu)}\right)^N, \end{equation} the $(N, N)$
form defined as the top exterior power of (\ref{OMEGAZCAL}). What
\eqref{eq-030209b} asserts is thus equivalent to
\begin{prop} \label{FSVOLZETA2FS} We have,
$$\left(\frac{i}{2} \ddbar \Phi_N \right)^N = |\det  \acal_N(h, \nu)|^2 |\Delta(\zeta)|^2 e^{- (N +
1)\Phi_N(\zeta)} \Pi_{j = 1}^N d^2 \zeta_j. $$
\end{prop}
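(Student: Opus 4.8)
The plan is to observe that Proposition~\ref{FSVOLZETA2FS} is a coordinate-free restatement of the formula \eqref{eq-030209b} of Proposition~\ref{FSVOLZETA2intro} already proved in \S\ref{DETG0}; the only work is to match the two sides, both of which are computations of the Fubini--Study volume form $dV_{FS,G_N(h,\nu)}$ on $\PP H^0(\CP^1,\ocal(N))\simeq(\CP^1)^{(N)}$ expressed in the zeros coordinates $\zeta_1,\dots,\zeta_N$ of the affine chart $(\C)^N$.

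For the left-hand side, I would trivialize $\zcal_N$ over $(\C)^N$ by the standard holomorphic section $P_\zeta(z)=\prod_{j=1}^N(z-\zeta_j)\,e^N(z)$. By \eqref{OMEGAZCAL} the curvature of $\zcal_N$ with the metric induced by $G_N(h,\nu)$ is $\omega_{\zcal}=\frac{i}{2}\ddbar\Phi_N$, with $\Phi_N(\zeta)=\log\|P_\zeta\|_{G_N(h,\nu)}$ as in \eqref{KPCONFIG}; hence by \eqref{CONFIGJPD} its top exterior power $(\frac{i}{2}\ddbar\Phi_N)^N$ is $dV_{FS,G_N(h,\nu)}$ written in the $\zeta_j$. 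For the right-hand side, I would quote the computation of \S\ref{DETG0}: formula \eqref{VOLA}, specialized as in \eqref{COEFF}, gives the lift of $dV_{FS,G_N(h,\nu)}$ to $\C^{N+1}$ in the monomial-coefficient coordinates $Z_j=(-1)^{N-j}e_{N-j}(\zeta)$ as $|\det\acal_N(h,\nu)|^2\,\prod_{j=1}^N dZ_j\wedge d\bar Z_j\,\big/\,\|\acal_N(h,\nu)Z\|^{2(N+1)}$. Newton--Vieta \eqref{VIETA} identifies the $Z_j$ with the divisor $\zeta$, with Jacobian $|\Delta(\zeta)|^2$ (Remark~\ref{LOCAL}), so $\prod_j dZ_j\wedge d\bar Z_j=|\Delta(\zeta)|^2\prod_j d^2\zeta_j$; and the denominator is $\|\acal_N(h,\nu)Z\|^{2(N+1)}=\|P_\zeta\|_{G_N(h,\nu)}^{2(N+1)}=e^{(N+1)\Phi_N(\zeta)}$ by \eqref{KPCONFIG}. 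Substituting and comparing the two expressions for $dV_{FS,G_N(h,\nu)}$ gives the claimed identity.

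The step I expect to be the main obstacle --- really the only place requiring attention --- is the passage through \eqref{VOLA}: the lift of the Fubini--Study form carries the additional factors $|(\acal_N(h,\nu)Z)_0|^2$ and $\big(\tfrac{\partial}{\partial Z_0}\wedge\tfrac{\partial}{\partial\bar Z_0}\bot\,\acal_N(h,\nu)^*(dZ_0\wedge d\bar Z_0)\big)^{-1}$, and one must verify, as in \eqref{ANZZBAR}, that with the leading-coefficient-one normalization of $P_\zeta$ these reduce to the $\zeta$-independent scalars $k_N^{-2}$ and $k_N^2$ (where $\psi_N=k_Nz^N+\cdots$), so that they cancel and no stray function of $\zeta$ survives. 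This is exactly why $P_\zeta=\prod(z-\zeta_j)e^N$ is the correct trivialization to use. Keeping the two indexing conventions aligned --- the highest monomial coefficient normalized to $1$ versus the ordering $Z_0=(-1)^Ne_N(\zeta)$ --- is the only other potential source of error; once both are pinned down, the identity follows from the change of variables already carried out in \S\ref{DETG0}.
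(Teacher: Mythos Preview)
Your proposal is correct and takes essentially the same approach as the paper: the paper does not give a separate proof of Proposition~\ref{FSVOLZETA2FS} but simply states that ``what \eqref{eq-030209b} asserts is thus equivalent to'' it, and your argument spells out exactly this equivalence, including the key cancellation of the $|(\acal_N Z)_0|^2$ and $|\acal_N^{00}|^{-2}$ factors from \eqref{ANZZBAR}--\eqref{kNSTUFF}. (One harmless caveat: your identity $\|P_\zeta\|^{2(N+1)}=e^{(N+1)\Phi_N}$ is consistent with the Proposition's statement but requires reading $\Phi_N=\log\|P_\zeta\|^2$ rather than $\log\|P_\zeta\|$ as in \eqref{KPCONFIG}; this is a normalization slip in the paper, not in your reasoning.)
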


This Proposition clarifies in what sense the right hand side is a
well-defined volume form on $(\CP^1)^{(N)}$. Namely, it
corresponds to the choice of the \kahler potential $\Phi_N$, i.e.
the expression of the Hermitian metric $G$ on $\zcal$  in the
local frame $P_{\zeta}$.

\section{\label{GREEN} Green's functions and  the joint
probability current: completion of the
proof of Proposition \ref{FSVOLZETA2intro}}

As discussed in the introduction, it is very helpful to express
the joint probability current and rate function in terms of
global objects on $\CP^1$. In the statement of Theorem \ref{g0},
we expressed $I^{h, K}$ in terms of the Green's function $G_h$. In
this section, we give background on the definition and properties
of Green's function that are needed in the proof of Theorem
\ref{g0}.  The main result  is Proposition \ref{JPDINV}, in which
we express the  joint probability current in terms of Green's
functions, and thus complete the proof of Proposition \ref{FSVOLZETA2intro}.

\subsection{\label{HGF} Green's function for $\omega_h$}

The Green's function $G_h(z,w) $ is defined in (\ref{GHDEF}). We now verify that $G_h$ is well-defined,
that it is smooth outside of the diagonal in $\CP^1 \times \CP^1$ and that its only singularity is a logarithmic
singularity on the diagonal. We sometimes write  $g_z(w) = G(z,w)$ to emphasize that the derivatives in
(\ref{GHDEF}) are in the $w$ variable.
When $\omega_h$ is a \kahler metric, $g_z(w)$ is a special case of the notion of Green's
 current for the divisor $\{z\}$.  For background we refer to \cite{He}, although it only discusses the
 case where $\omega_h$ is a \kahler form. We also refer to \cite{ABMNV} for background on global analysis on
 Riemann surfaces.

When we express the Green's function in the charts $U_1 \times
U_1,$ resp. $U_2 \times U_2$ of $\CP^1 \times \CP^1$, we subscript
$G_h$ accordingly. We also drop the subscript $h$ for simplicity
of notation when the metric is understood.

\begin{prop}  There exists a unique function $G_h(z,w) \in L^1(\CP^1 \times \CP^1)$ solving
the system of equations (\ref{GHDEF}). When $z \not= \infty$, in
the local affine chart $\C$ it is given by (\ref{GHFORMintro}).
Under the holomorphic map $z \to \frac{1}{z}$, we have
$$G_1(\frac{1}{z}, \frac{1}{w})  = G_2(z,w).$$
\end{prop}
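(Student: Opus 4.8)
The plan is to verify the three assertions about $G_h$ in turn: existence/uniqueness as an $L^1$ current solving \eqref{GHDEF}, the local formula \eqref{GHFORMintro} in the affine chart, and the transformation law under $z\mapsto 1/z$.

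\textbf{Existence and uniqueness.} For existence, I would start from the $\ddbar$ Lemma for currents on $\CP^1$ (quoted in \S\ref{BACKGROUND}): since $\delta_z - \omega_h$ is a $d$-closed $(1,1)$ current of total mass $1 - \int_{\CP^1}\omega_h = 0$ (recall $\int_{\CP^1}\omega_h = c_1(\ocal(1))\cdot[\CP^1] = 1$... actually here $h$ is a metric on $\ocal(1)$, so $\int\omega_h = 1 = \delta_z$'s mass), it is exact, so there is a current $u_z$ with $dd^c_w u_z = \delta_z - \omega_h$. The regularity theory for $\ddbar$ (Prop.\ 1.4 of \cite{GZ}, quoted above) gives $u_z \in L^1(\CP^1)$; elliptic regularity for the Laplacian off the diagonal, plus the known local form of the fundamental solution \eqref{FUNDSOL}, shows $u_z$ is smooth away from $z$ with a $2\log|w-z|$ singularity at $w=z$. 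One then symmetrizes: set $G_h(z,w)$ to be $u_z(w)$ adjusted by a function of $z$ alone so that (iii) $\int_{\CP^1} G_h(z,\cdot)\,\omega_h = 0$ holds; symmetry (ii) follows from the standard Green's-function symmetry argument (integrate $g_z\,dd^c g_{z'} - g_{z'}\,dd^c g_z$ over $\CP^1$, use Stokes, and normalization (iii) to kill the $\omega_h$ cross terms). Uniqueness: the difference of two solutions is $dd^c$-harmonic on $\CP^1$, hence (maximum principle / the only global pluriharmonic functions on a compact manifold are constants) constant in each variable, and normalization (iii) forces the constant to be $0$.

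\textbf{Local formula.} With $h = e^{-\phi}$ and $\omega_h = dd^c\phi$ in the affine frame $e$ over $\C$, I would simply check that the right-hand side of \eqref{GHFORMintro}, namely $F(z,w) = 2\log|z-w| - \phi(z) - \phi(w) + E(h)$, solves the system. Applying $dd^c_w$ and using \eqref{FUNDSOL} gives $dd^c_w F = \delta_z - dd^c\phi = \delta_z - \omega_h$ — but one must be careful that this identity, valid on the chart $\C$, extends correctly across $\infty$; this is where the Robin-constant bookkeeping in $E(h)$ and the frame-change formula $\phi_2(w) = \phi_1(1/w) + \log|w|^2$ enter. Symmetry (ii) is manifest. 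For (iii) I would show the particular additive constant $E(h)$ of \eqref{EH} is exactly what makes $\int_{\CP^1} F(z,\cdot)\,\omega_h = 0$; this is a computation invoking the definition of the Robin constant $\rho_\phi(\infty)$ (referenced forward to \S\ref{IP}) and I would cite Lemma \ref{GHFORM} rather than redo it. By uniqueness, $F = G_h$.

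\textbf{Transformation law.} Let $\sigma(z) = 1/z$. I claim $\widetilde G(z,w) := G_1(1/z,1/w)$ solves the defining system \eqref{GHDEF} in the chart $U_2$, whence $\widetilde G = G_2$ by uniqueness. The point is that $\delta_z - \omega_h$ is an intrinsic $(1,1)$ current on $\CP^1$ — independent of chart — so pulling back by the biholomorphism $\sigma$ preserves equation (i); $\sigma$ also preserves the $\omega_h$-integral in (iii) since $\sigma^*\omega_h = \omega_h$ as a global form, and symmetry (ii) is obviously $\sigma$-invariant. So the whole system is $\sigma$-covariant and the formula follows.

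\textbf{Main obstacle.} The genuinely delicate point is matching the additive constant: showing that the globally-normalized $G_h$ has precisely the local expression \eqref{GHFORMintro} with $E(h)$ as in \eqref{EH} requires careful handling of the behavior at $\infty$, i.e. relating $\int_{\CP^1}\phi\,\omega_h$ and the Robin constant $\rho_\phi(\infty)$ through the frame change. Everything else is a routine application of the $\ddbar$ Lemma, elliptic regularity, and the uniqueness argument; I would lean on the forward references to Lemma \ref{GHFORM} and \S\ref{IP} for the constant computation rather than carrying it out here.
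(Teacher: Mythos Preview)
Your approach is sound in outline but differs from the paper's, and there is one genuine error.

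\textbf{Different construction.} The paper does not use the abstract $\ddbar$ lemma to produce $G_h$. Instead it writes down an explicit global object: the section $s_z(w)$ of $\pi_1^*\ocal(1)\boxtimes\pi_2^*\ocal(1)$ given in homogeneous coordinates by $w_1z_0-z_1w_0$ (in the affine frame this is just $w-z$), and sets $G_h(z,w)=\log\|s_z(w)\|^2_{h_z\boxtimes h_w}-E(h)$. Properties (i) and (ii) are then immediate from Poincar\'e--Lelong and the manifest symmetry of $s_z(w)$, and the paper computes the $\omega_h$-integral directly (via the Robin-constant formulae of \S\ref{IP}) to fix $E(h)$ so that (iii) holds. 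Your route via the $\ddbar$ lemma plus a symmetrization argument is valid and more ``soft'', but the paper's explicit construction has the advantage that the local formula \eqref{GHFORMintro} and the transformation law drop out of the same calculation rather than requiring separate verification.

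\textbf{The error.} Your argument for the transformation law is wrong as stated: you assert that $\sigma^*\omega_h=\omega_h$ as a global form, where $\sigma(z)=1/z$. This is false for a general smooth metric $h$; the involution $\sigma$ is a biholomorphism of $\CP^1$ but has no reason to be an isometry of $h$. (It happens to be for Fubini--Study, which may be what misled you.) In fact no invariance is needed. The subscripts $G_1,G_2$ denote the coordinate expressions of the \emph{single} global function $G_h$ in the two affine charts, and $z\mapsto 1/z$ is the transition map between those charts; so once you have established that $G_h$ is a well-defined global function, the identity $G_1(1/z,1/w)=G_2(z,w)$ is a tautology about how functions transform under change of coordinates. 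The paper verifies this concretely by the two-line computation $2\log|\tfrac1z-\tfrac1w|-\phi_1(\tfrac1z)-\phi_1(\tfrac1w)=2\log|z-w|-\phi_2(z)-\phi_2(w)$, using $\phi_2(w)=\phi_1(1/w)+\log|w|^2$. Either replace your $\sigma$-covariance paragraph with this computation, or simply observe that the identity is immediate from globality.
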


\begin{proof}

 Given any $z \in \CP^1$, there exists a section $s_z \in H^0(\CP^1, \ocal(1))$ which vanishes
at $z$. There exists a distinguished section (denoted ${\bf 1}_z(w)$ in \cite{ABMNV})
 which has the Taylor expansion $w - z$ in the standard affine frame and which corresponds to the meromorphic
 function $w - z$.   When $z = \infty$,
$s_{\infty}(w) $ corresponds to the meromorphic function $1$. As a
homogeneous polynomial of degree one in each variable on  $\C^2
\times \C^2$ it is given by $w_1 z_0 - z_1 w_0$.  We view the
two-variable section $s_w(z)$ as a section of $\pi_1^* \ocal(1)
\boxtimes \pi_2^* \ocal(1) \to \CP^1 \times \CP^1$ and equip the
line bundle with the product Hermitian metric $h_z \boxtimes h_w$
(here and in what follows, $\boxtimes$ denotes the exterior tensor
product on $\CP^1 \times \CP^1$). We then claim that (with $E(h)$
defined in (\ref{EH})),
\begin{equation} \label{GREENSECTION} G_{h}(z,w) = \log || s_z(w) ||^2_{h_z \boxtimes h_w} - E(h) \end{equation}
satisfies (i)- (iii) of (\ref{GHDEF}) for all $z$. Both (i) and
(ii) are clear from the formula and from (\ref{PL}).

To prove (iii) and the identity claimed in the Proposition, it is
convenient to use the local affine frames $e_j $ of $\ocal(1) \to
\CP^1$ over the affine charts $U_j$ (see \S \ref{BACKGROUND} for
notation).

\begin{lem} \label{GHFORM}
    There exists a constant $E(h)$ so that,
    in the affine chart $U_j$ ($j = 1, 2)$ and
    all
     $z \in \C$,
\begin{equation} \label{GHFORMa}  G_{j}(z,w) = 2 \log
|z - w| - \phi_j(z) - \phi_j(w) +  E(h),
\end{equation}
and $\int_{\C} G_j(z,w) dd^c \phi_j = 0$.
\end{lem}

Indeed,  in $U_1$ we put $z_0 = w_0 = 1$ and $z_1 = z, w_1 = w$,
and then
\begin{equation} \label{ADDa} \log ||s_z(w)||^2_{h_z \boxtimes h_w} = 2 \log |z - w| -  \phi_1(z) -
 \phi_1(w). \end{equation}
 In $U_2$ we put $z_1 = w_1 = 1$ and $z_0 = z, w_0 = w$ and obtain
 the same expression with $\phi_2$ replacing $\phi_1$. On the
 overlap, the stated identity follows from the fact that
 $$2 \log |\frac{1}{z} -  \frac{1}{w}|  - \phi_1(\frac{1}{z}) -
 \phi_1(\frac{1}{w}) = 2 \log |z - w| - \phi_1(\frac{1}{z}) -
 \phi_1(\frac{1}{w}) - 2 \log |z| - 2 \log |w|, $$
 and the fact that $\phi_2(w) = \phi_1(\frac{1}{w}) +  \log |w|^2$
 (see \S \ref{BACKGROUND}).

 To complete the proof, we need to show that
$  \int_{\CP^1}  \log || z - w||^2_{h_z \boxtimes h_w} \omega_h $
is a constant in $z$. In fact we claim that when $z,w \in U_1$,
then
\begin{equation} \label{LOGINTc}  \int_{\CP^1}  \log || z - w||^2_{h_z \boxtimes h_w} \omega_h   =
-  \int \phi \omega_h -4 \pi  \rho_{\phi}(\infty).
\end{equation}

The calculation of this integral can be done by the integration by
parts formulae in \S \ref{IP}. We use  (\ref{ADDa}) to break up
the integrand into three terms. The second integrates to $-
 \phi(z) \int_{\CP^1} \omega_h = - \phi(z),$
while the third integrates to $-  \int \phi \omega_h = - \int \phi
dd^c \phi$. The first (logarithmic) term is of the form
(\ref{LOGROBIN}):
\begin{equation} \label{INTEGRALa} \int_{\C}  2 \log |z - w|
dd^c \phi_1 = \phi_1(z) -  4 \pi \rho_{\phi_1}(\infty).
\end{equation}
In the full sum, the $\phi_1(z)$ terms cancel, leaving the stated
expression. The same integral holds with $\phi_2$ replaced by
$\phi_1$ if $z, w \in U_2$ by the identity in the Proposition.
This proves the integral formula in all cases.

\end{proof}

As an example of the calculation, the Fubini-Study Green's function is given in the chart $U_1 \times U_1$ by
$G_{FS} (z,w) = 2 \log [z,w]^2 - C,$ where $[z,w] = \frac{|z - w|
}{\sqrt{1 + |z|^2} \sqrt{1 + |w|^2}} $. The
constant $C$ is determined by the condition (iii). To study its behavior when $z = \infty$
we change coordinates $\sigma: z \to \frac{1}{z}, w \to \frac{1}{w}$ and study the behavior at $0$. The
distance $[z,w]$ and Green's function  are invariant under the isometry $\sigma$, so we obtain the same expression after the change
of coordinates.  In particular, in these coordinates, $G_{FS}(\infty, u) = 2 \log |u| -  \log (1 + |u|^2) = \phi(\frac{1}{u})$,
where $\phi_{FS}(w) = \log (1 + |w|^2)$.

\begin{rem}  We note that a local \kahler potential $\phi$ (or a
global relative \kahler potential) is only unique up to an
additive constant. One may normalize $\phi$ by the condition
$\int_{\CP^1} \phi \omega_h = 0$. However, in the above formula we
have not done so. We observe that the  Green's function is (as it
must be) invariant under addition of a constant to $\phi$.

\end{rem}

\subsection{\label{GF1} Green's potential of a measure}

We now return to the Green's potential (\ref{Green's potential})
and Green's energy (\ref{GEN}) of the introduction. Given a real
$(1,1)$ form  $\omega$ on $\CP^1$, we define
 \begin{equation} \label{SHH} SH(\CP^1, \omega): = \{u \in L^1(\CP^1, \R
 \cup \{- \infty\}): dd^c u + \omega   \geq 0\}. \end{equation}
For any closed $(1,1)$ form, the  $\ddbar$ Lemma implies that the
map
\begin{equation} \label{PSIMAP} \psi \to \omega_{\psi} := \omega + dd^c \psi \in \mcal(\CP^1) \end{equation} is surjective
and has only constants in its kernel, i.e.
\begin{equation}\label{SHM}  SH(\CP^1, \omega) \simeq \mcal(\CP^1) \oplus \R.
\end{equation}
The Green's potential (\ref{Green's potential}) of a measure
defines a global inverse to (\ref{PSIMAP}) and is uniquely
characterized as the solution of
\begin{equation} \left\{ \begin{array}{l}
dd^d U^{\mu}_{\omega} = \mu - \omega; \\ \\
 \int_{\CP^1} U^{\mu}_{\omega} \omega = 0.
\end{array} \right. \end{equation}
Any smooth integral $(1,1)$ form $\omega \in H^2(\CP^1, \Z)$ is
the curvature $(1,1)$ form of a smooth Hermitian metric $h$ (see
\S \ref{BACKGROUND}), and we subscript the potential by $h$ rather
than $\omega$. Thus,
\begin{equation} \label{DGCAL} dd^c U^{\mu}_{h} (z) = \mu -
\omega_h.
\end{equation}

We illustrate Green's potentials in the important case where
 $\mu = \mu_{\zeta}$. In Lemma \ref{GREENNORM}, we
essentially wrote the $\omega_h$-subharmonic function $\frac{1}{N}
\log ||s_\zeta(z)||_{h^N} $ with  $s_\zeta = \prod_{j = 1}^N (z -
\zeta_j) e^N(z)$ as a Green's potential. To tie the discussions
together, we note that the special case $\omega = \omega_h$  of
Lemma \ref{GREENNORM} can be reformulated in terms of Green's
potentials as follows:
\begin{lem} \label{NORMSUPHI}  We have,

\begin{itemize}

\item $ \frac{1}{N} \log ||s_\zeta(z)||_{h^N} - \frac{1}{N}
\int_{\CP^1} \log ||s_\zeta||_{h^N}^2 \omega_h =
U_{h}^{\mu_{\zeta}}(z). $

\item $   \int \log ||s_{\zeta} (w)||_{h^N}^2  \omega_h =\int_{\C}
\log ||s_{\zeta} (w)||_{h^N}^2  dd^c \phi =  N (\int \phi
d\mu_{\zeta} - E(h)). $

\item
 Hence
$$ ||s_\zeta(z)||_{h^N}^{\frac{1}{N}} e^{ - \frac{1}{N}
\int_{\CP^1} \log ||s_\zeta||_{h^N}^2 \omega_h} =
e^{U_{h}^{\mu_{\zeta}}}. $$

\end{itemize}
\end{lem}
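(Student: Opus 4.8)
The plan is to unwind the three bullets in order, using the Poincar\'e--Lelong formula (\ref{PL}), the defining properties (\ref{GHDEF}) of the Green's function, and the integration-by-parts identity (\ref{INTEGRALa}) already established in the proof of Lemma \ref{GHFORM}. First I would prove the \emph{second} bullet, since it is purely computational and feeds the other two. Writing $s_\zeta = \prod_{j=1}^N(z-\zeta_j)\,e^N(z)$ in the affine frame, one has $\|s_\zeta(w)\|_{h^N}^2 = \prod_j |w-\zeta_j|^2 e^{-N\phi(w)}$, so that $\log\|s_\zeta(w)\|_{h^N}^2 = 2\sum_j \log|w-\zeta_j| - N\phi(w)$. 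Since $\omega_h = dd^c\phi$ on $U_1$ (and $\kappa$-type boundary issues at $\infty$ are handled exactly as in Lemma \ref{GHFORM} via the identity in the Proposition), integrating against $\omega_h$ and applying (\ref{INTEGRALa}) to each term $\int_\C 2\log|w-\zeta_j|\,dd^c\phi = \phi(\zeta_j) - 4\pi\rho_\phi(\infty)$, together with $\int\phi\,\omega_h$ for the last term, gives $\int \log\|s_\zeta\|_{h^N}^2\,\omega_h = \sum_j\phi(\zeta_j) - 4\pi N\rho_\phi(\infty) - N\int\phi\,\omega_h = N\big(\int\phi\,d\mu_\zeta - E(h)\big)$ after recalling $E(h) = \int\phi\,\omega_h + \rho_\phi(\infty)$ from (\ref{EH}) — I would double-check the normalization constants $4\pi$ here, as that is the one place an error could hide. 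The equality with $\int_\C \log\|s_\zeta\|_{h^N}^2\,dd^c\phi$ is immediate since $\omega_h$ and $dd^c\phi$ agree as currents via Lemma \ref{INVAR}.

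Next I would establish the \emph{first} bullet. By Poincar\'e--Lelong (\ref{PL}), $\frac{2}{N}dd^c\log\|s_\zeta\|_{h^N} = \mu_\zeta - \omega_h$, i.e. $dd^c\big(\frac{1}{N}\log\|s_\zeta\|_{h^N}\big) = \mu_\zeta - \omega_h$. On the other hand, $U_h^{\mu_\zeta}$ is by (\ref{DGCAL}) the unique solution of $dd^c U_h^{\mu_\zeta} = \mu_\zeta - \omega_h$ with $\int_{\CP^1} U_h^{\mu_\zeta}\omega_h = 0$. Hence $\frac{1}{N}\log\|s_\zeta\|_{h^N} - U_h^{\mu_\zeta}$ is $dd^c$-closed, so constant on $\CP^1$; the constant is pinned down by integrating against $\omega_h$ and using $\int U_h^{\mu_\zeta}\omega_h = 0$, which forces the constant to equal $\frac{1}{N}\int_{\CP^1}\log\|s_\zeta\|_{h^N}\,\omega_h = \frac{1}{2N}\int_{\CP^1}\log\|s_\zeta\|_{h^N}^2\,\omega_h$. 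This is exactly the asserted identity. Finally, the \emph{third} bullet is just the exponential of the first: exponentiating $\frac{1}{N}\log\|s_\zeta(z)\|_{h^N} = U_h^{\mu_\zeta}(z) + \frac{1}{N}\int_{\CP^1}\log\|s_\zeta\|_{h^N}^2\,\omega_h$ (using $\frac{1}{N}\log\|s_\zeta\|_{h^N} = \log\|s_\zeta\|_{h^N}^{1/N}$) and moving the constant to the left side gives $\|s_\zeta(z)\|_{h^N}^{1/N} e^{-\frac{1}{N}\int_{\CP^1}\log\|s_\zeta\|_{h^N}^2\omega_h} = e^{U_h^{\mu_\zeta}(z)}$.

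The only genuine subtlety — the "hard part" such as it is — is the bookkeeping at $z = \infty$: $s_\zeta$ as a section of $\ocal(N)$ is honestly defined on all of $\CP^1$, but the frame expression $\prod_j|w-\zeta_j|^2 e^{-N\phi(w)}$ is only valid on $U_1$, and one must invoke the transformation law $\phi_2(w) = \phi_1(1/w) + \log|w|^2$ from \S\ref{BACKGROUND} (equivalently Lemma \ref{INVAR} and the identity $G_1(1/z,1/w) = G_2(z,w)$) to see that all the integrals against $\omega_h$ are well-defined and that (\ref{INTEGRALa}) applies globally. Since $\log\|s_\zeta\|_{h^N}$ is $\omega_h$-subharmonic and in $L^1(\CP^1)$ (it has only a logarithmic singularity at each $\zeta_j$), all the integrations by parts and the "$dd^c$-closed implies constant" step are legitimate at the level of currents, so no extra regularity argument is needed beyond what is already in the excerpt.
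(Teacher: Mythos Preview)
Your proposal is correct and close in spirit to the paper's proof, though organized a bit differently. For the first bullet the paper simply writes out $U_h^{\mu_\zeta}(z)=\int G_h(z,w)\,d\mu_\zeta(w)$, substitutes $d\mu_\zeta=\frac{1}{N}\,dd^c\log\|s_\zeta\|_{h^N}^2+\omega_h$ from Poincar\'e--Lelong, kills the $\omega_h$-term with property (iii) of (\ref{GHDEF}), and integrates $dd^c$ by parts using property (i). You instead invoke the uniqueness characterization of $U_h^{\mu}$ (the unique $dd^c$-primitive of $\mu-\omega_h$ with $\omega_h$-mean zero) and match constants; this is the same content packaged contrapositively. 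The paper's written proof in fact only displays the first bullet, leaving the second to the appendix computation (\ref{INTEGRALa})/(\ref{LOGSINTa}) and the third as an immediate exponentiation, so your more explicit treatment of bullet two is a welcome addition rather than a deviation.

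One small slip to fix: from $\frac{2}{N}\,dd^c\log\|s_\zeta\|_{h^N}=\mu_\zeta-\omega_h$ you should conclude $dd^c\bigl(\frac{1}{N}\log\|s_\zeta\|_{h^N}^{2}\bigr)=\mu_\zeta-\omega_h$, not $dd^c\bigl(\frac{1}{N}\log\|s_\zeta\|_{h^N}\bigr)=\mu_\zeta-\omega_h$; the missing square is inherited from a typo in the statement of the first bullet (the paper's own derivation lands on $\frac{1}{N}\log\|s_\zeta\|_{h^N}^2$ in both terms). Your caution about the $4\pi$ in the $\rho_\phi(\infty)$ term is well placed for the same reason --- the paper's normalizations around (\ref{EH}), (\ref{LOGINTc}) and (\ref{INTEGRALa}) are not entirely internally consistent, but this is a bookkeeping issue in the source rather than in your argument.
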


\begin{proof}  Since
$d\mu_{\zeta} = dd^c \frac{1}{N}  \log ||s_\zeta(z)||_{h^N}^2 +
\omega_h, $
$$\begin{array}{lll}    U_{h}^{\mu_{\zeta}}(z) &: =&
    \int_{\CP^1} G_{h}(z,w) d\mu_{\zeta}(w)\\ && \\ & = &
\int_{\CP^1} G_h(z,w)  (\frac{1}{N} dd^c \log ||s_\zeta(w)||_{h^N}^2 +
\omega_h) \\ && \\ & = &  \int_{\CP^1} G_h(z,w)  \frac{1}{N} dd^c
\log ||s_\zeta(w)||_{h^N}^2 \\ && \\ & = & \frac{1}{N} \log
||s_\zeta||_{h^N}^2(z) - \frac{1}{N} \int_{\CP^1} \log ||s_\zeta||_{h^N}^2(z)
\omega_h.
\end{array}$$
\end{proof}

\subsection{Regularity of Green's functions}

For use in the proof of the large deviation principle, we need the
following regularity result on the  Green's function. In what
follows, $D=\{(z,z): z\in \CP^1\}$.
\begin{prop}
    \label{prop-grub}
    $G_h(z,w) \in C^{\infty}(\CP^1 \times \CP^1 \backslash
D)$, and  in any local chart, near the diagonal it possesses the
singularity expansion,
\begin{equation} \label{HADAMARD} G_h(z,w) = 2 \log |z - w| +
\rho(z) + O(|z - w|)
\end{equation}  where $\rho(z)$ is a smooth function on $\CP^1$ known as the
Robin constant. In particular, $G_h(z, \cdot) \in L^1(\CP^1,
\omega_h)$ for any $z$, and there exists a constant $C_G < \infty$
so that $$\sup_{(z, w) \in \CP^1 \times \CP^1} G(z,w) \leq C_G.$$
\end{prop}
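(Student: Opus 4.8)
The plan is to prove Proposition \ref{prop-grub} by using the explicit section formula \eqref{GREENSECTION} from the previous result, namely $G_h(z,w) = \log\|s_z(w)\|^2_{h_z\boxtimes h_w} - E(h)$, where $s_z(w)$ is the distinguished holomorphic section of $\pi_1^*\ocal(1)\boxtimes\pi_2^*\ocal(1)\to\CP^1\times\CP^1$ vanishing exactly on the diagonal $D$. First I would observe that $s_z(w)$ is a global holomorphic section, given in homogeneous coordinates by the bilinear form $w_1z_0 - z_1w_0$, and that its zero divisor is precisely $D$ with multiplicity one; consequently $\log\|s_z(w)\|^2_{h_z\boxtimes h_w}$ is smooth away from $D$ because the product Hermitian metric $h_z\boxtimes h_w$ is smooth and $s_z(w)$ is nonvanishing there. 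This immediately gives $G_h\in C^\infty(\CP^1\times\CP^1\setminus D)$.

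Next I would establish the singularity expansion \eqref{HADAMARD} near $D$. Working in a local affine chart, by \eqref{ADDa} we have $\log\|s_z(w)\|^2_{h_z\boxtimes h_w} = 2\log|z-w| - \phi(z) - \phi(w)$ with $\phi$ smooth, so
\begin{equation}
G_h(z,w) = 2\log|z-w| - \phi(z) - \phi(w) + E(h).
\end{equation}
Taylor-expanding $\phi(w)$ around $w=z$ gives $-\phi(z)-\phi(w) = -2\phi(z) + O(|z-w|)$, so the expansion holds with Robin constant $\rho(z) = -2\phi(z) + E(h)$ (and, near $\infty$, with $\phi_2$ in place of $\phi_1$, which agrees on overlaps by the relation $\phi_2(w)=\phi_1(1/w)+\log|w|^2$); this defines $\rho$ as a smooth function on all of $\CP^1$ once one checks the two local expressions patch — which follows from the invariance $G_1(1/z,1/w)=G_2(z,w)$ already proved. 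Since $\log|z-w|\in L^1_{\mathrm{loc}}$ in one complex dimension, integrability $G_h(z,\cdot)\in L^1(\CP^1,\omega_h)$ for each fixed $z$ is then immediate from smoothness off the diagonal plus the locally integrable logarithmic singularity.

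Finally I would prove the uniform upper bound $\sup_{\CP^1\times\CP^1}G_h\le C_G$. The point is that $-\log|z-w|$ is bounded above on the compact set $\CP^1\times\CP^1$ away from the diagonal, and near the diagonal $2\log|z-w|\to-\infty$, so the singular term only helps. More precisely: $G_h(z,w)$ is continuous on $\CP^1\times\CP^1\setminus D$ with values in $\R$, and as $(z,w)\to D$ it tends to $-\infty$; hence $G_h$ extends to an upper semicontinuous function on the compact space $\CP^1\times\CP^1$ (with value $-\infty$ on $D$), and an upper semicontinuous function on a compact set attains its supremum, which is therefore finite. Alternatively, one can just note from \eqref{GHFORMa} that $G_h \le 2\log|z-w| + \sup(-\phi_j(z)-\phi_j(w)) + E(h)$, with the first term bounded above by $2\log(\operatorname{diam})$ in each chart and the rest bounded by smoothness and compactness. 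The only mild technical point — the main obstacle, such as it is — is to handle the chart changes and the point $\infty$ carefully so that the boundedness and the smoothness of $\rho$ are genuinely global rather than chart-dependent; this is dispatched by invoking the already-proved transformation law $G_1(1/z,1/w)=G_2(z,w)$ together with Lemma \ref{INVAR}-style covering of $\CP^1\times\CP^1$ by the four product charts $U_i\times U_j$.
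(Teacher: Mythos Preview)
Your proof is correct and takes a more elementary route than the paper. The paper argues in two stages: first, when $\omega_h$ is K\"ahler, it invokes the eigenfunction expansion $G_\omega(z,w)=\sum_{j\ge1}\lambda_j^{-1}\phi_j(z)\phi_j(w)$ for the Laplace Green's function and the Hadamard--Riesz parametrix to obtain smoothness off the diagonal and the logarithmic expansion; then, for a general smooth $h$, it introduces an auxiliary K\"ahler form $\omega$ and proves the comparison identity
\[
G_h(z,w) - G_\omega(z,w) = U_h^\omega(z) + U_h^\omega(w) - \int U_h^\omega\,\omega,
\]
observes that the right side is smooth by elliptic regularity of the relative potential, and transfers the conclusions from $G_\omega$ to $G_h$. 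Your approach bypasses all of this by reading everything directly off the explicit local formula $G_h(z,w)=2\log|z-w|-\phi(z)-\phi(w)+E(h)$ already established in Lemma~\ref{GHFORM}, which is certainly cleaner for the proposition at hand. The tradeoff is that the paper's comparison identity \eqref{GOMGH} is reused later (Lemma~\ref{ENERGYFORMS}, via \eqref{CURIOUS}) in the proof of convexity of $-\ecal_h$ when $\omega_h$ is not positive, so the paper's detour through the K\"ahler case is not wasted work.

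One small correction: the Robin function $\rho$ does \emph{not} patch to a global smooth function on $\CP^1$. The term $2\log|z-w|$ is chart-dependent; under $z\mapsto1/z$, $w\mapsto1/w$ it acquires $-2\log|z|-2\log|w|=-4\log|z|+O(|z-w|)$ near the diagonal, and correspondingly your $\rho_1(z)=-2\phi_1(z)+E(h)$ and $\rho_2(1/z)=-2\phi_2(1/z)+E(h)$ differ by $4\log|z|$ on the overlap. The phrase ``$\rho(z)$ is a smooth function on $\CP^1$'' in the proposition should be read as: in each affine chart the local Robin function is smooth there, and the charts cover $\CP^1$. This does not affect the rest of your argument, since the $L^1$ and upper-bound claims only use the local form of the singularity together with compactness.
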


\begin{proof}

 When $\omega$ is a \kahler metric, we may form its Laplacian $\Delta_{\omega}$
 and then the Green's function $G_{\omega} (z,w)$ is
 the kernel of $\Delta_{\omega}^{-1}$ on the orthogonal complement
 of the constant functions.
Thus, in the compact case,  $G_{\omega}$ is defined  by two
conditions: \begin{enumerate}

 \item  $\Delta_{\omega} G_{\omega} (z,w) = \delta_z(w) - \frac{1}{A},$ where $A = \int_{\CP^1} \omega$. That is, $G_{\omega}(z,w)$ is a (singular)
 $\omega$-subharmonic function. In our case $A = 1$.

 \item
$\int_{\CP^1} G_{\omega} (z,w) \omega = 0$.

\end{enumerate}

We denote by  $\{\phi_j\}_{j = 0}^{\infty}$  an orthonormal basis
of eigenfunctions of $\Delta_{\omega}$ in $L^2(\CP^1, \omega)$,
with $\phi_0 = \frac{1}{\sqrt{A}}$ and with $  \Delta \phi_j =
\lambda_j \phi_j$ with $0 = \lambda_0 > \lambda_1 \geq \lambda_2
\downarrow - \infty$. Then $G_{\omega}$ has the eigenfunction
expansion,
\begin{equation} \label{EFNEXP} G_{\omega}(z,w) = \sum_{j = 1}^{\infty}
\frac{\phi_j(z) \phi_j(w)}{\lambda_j}. \end{equation} The
singularity expansion near the diagonal is then   a standard fact
which follows from the Hadamard-Riesz parametrix method (see
\cite{HoIII}, Section 17.4).

We now consider general smooth  $(1,1)$ form $\omega_h$. When
$\omega_h$ fails to be \kahler, we introduce a \kahler metric
$\omega$ in the same cohomology class as $\omega_h$. Since
$\int_{\CP^1} \omega = \int_{\CP^1} \omega_h$,  the $\ddbar$ Lemma
implies that there exists a relative \kahler potential $\phi_{h
g}$ such that $ \omega_h - \omega= dd^c \phi_{h g}. $  By
definition (\ref{Green's potential}), the relative potentials are
given by
\begin{equation} \label{DDCU} dd^c U^{\omega}_h = \omega - \omega_h,
\;\;\; dd^c U^{\omega_h}_{\omega} = \omega_h - \omega.
\end{equation}
It follows that  $U^{\omega}_h = - U^{\omega_h}_{\omega} + A_{gh}$
for a constant $A_{gh}$, and from  $\int U^{\omega_h}_{\omega}
\omega = 0$ we have
\begin{equation} \label{UGH} U^{\omega}_h = -
U^{\omega_h}_{\omega} + \int U^{\omega_h}_{\omega} \omega_h.
\end{equation}  By integrating both sides
against $\omega_h$ we also have $\int U^{\omega}_h \omega = \int
U_{\omega}^{\omega_h} \omega_h. $

 We then
claim that
\begin{equation} \label{GOMGH} G_h(z,w) -  G_{\omega}(z,w) =  U^{\omega}_h (z) + U^{\omega}_h (w)
- \int U^{\omega}_h \omega.
\end{equation}
Since  the  relative potential $U^{\omega}_h$ is  a solution of
the elliptic equation (\ref{DDCU}),  and the left side is
$C^{\infty}$, it follows that $U^{\omega}_h \in C^{\infty}$. Hence
(\ref{GOMGH}) implies the regularity result for any smooth $h$.

To conclude the proof, we need to prove the identity
(\ref{GOMGH}). We observe that the  $dd^c$ derivatives of both
sides of (\ref{GOMGH}) in either $z$ or $w$ agree, both  equaling
$\omega - \omega_h$. Hence, there exists a unique constant
$C_{gh}$ such that
\begin{equation} \label{GOMGHa} G_h(z,w) -  G_{\omega}(z,w) =  U^{\omega}_h (z) + U^{\omega}_h (w) + C_{gh}.
\end{equation}
To determine $C_{gh}$ we integrate both sides of (\ref{GOMGHa})
against $\omega_h(z) \boxtimes \omega(w)$ and use that $\int
G_{\omega} \omega = 0 = \int G_h \omega_h.$ Hence, $$C_{gh} = - (
\int U^{\omega}_h  \omega_h + \int U^{\omega}_h  \omega ) = - \int
U^{\omega}_h  \omega , $$ since $\int U^{\omega}_h  \omega_h = 0$.
This implies
 (\ref{GOMGH}).

\end{proof}

\begin{cor} \label{UUB} With $C_G$ as in Proposition
    \ref{prop-grub},
    for any $\mu \in \mcal(\CP^1)$, $\sup_z U^{\mu}_h
\leq C_G$ \end{cor}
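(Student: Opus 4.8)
Since the statement asks only for the uniform bound $\sup_z U^\mu_h \le C_G$ for every $\mu \in \mcal(\CP^1)$, the argument is short. The plan is to unwind the definition of the Green's potential $U^\mu_h(z) = \int_{\CP^1} G_h(z,w)\,d\mu(w)$ and apply the uniform upper bound $G_h(z,w) \le C_G$ established in Proposition \ref{prop-grub}.

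First I would note that by Proposition \ref{prop-grub} the function $G_h(z,\cdot)$ lies in $L^1(\CP^1,\omega_h)$ and is bounded above by $C_G$ everywhere on $\CP^1\times\CP^1$; in particular, for each fixed $z$ the integrand $w\mapsto G_h(z,w)$ is a measurable function bounded above by the constant $C_G$, so $U^\mu_h(z)$ is well defined in $[-\infty, C_G]$ for every probability measure $\mu$. Then, since $\mu$ is a probability measure,
\[
U^\mu_h(z) = \int_{\CP^1} G_h(z,w)\,d\mu(w) \le \int_{\CP^1} C_G\,d\mu(w) = C_G,
\]
and this holds for every $z \in \CP^1$, giving $\sup_z U^\mu_h \le C_G$ uniformly in $\mu$. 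That is the entire proof.

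There is essentially no obstacle here: the only point requiring care is that $U^\mu_h(z)$ could a priori equal $-\infty$ for some $z$ (when $\mu$ has an atom at a point where $G_h(z,\cdot) = -\infty$, i.e.\ $w = z$), but this does not affect the upper bound, which is all that is claimed. The genuine content — the two-sided finiteness $|\sup_K U^\mu_h| < \infty$ alluded to after \eqref{GEN} — is a separate matter requiring the Bernstein--Markov and regularity hypotheses and is handled elsewhere (Lemma \ref{lem-ue}, Proposition \ref{LSCJG}); it is not needed for this corollary.
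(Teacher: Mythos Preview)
Your proof is correct and is essentially identical to the paper's own argument: both simply bound the integrand $G_h(z,w)\le C_G$ and use that $\mu$ is a probability measure. The additional remarks you make about well-definedness and the separate issue of lower bounds are accurate but not needed here.
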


\begin{proof}

This follows from the fact that $U^{\mu}_h(z) = \int_{\CP^1}
G_h(z,w) d\mu(w) \leq C_G, $ as $\int d\mu = 1$.
\end{proof}

\subsection{\label{GE} Green's  energy}
From \eqref{GEN},
we have for the Green's energy with respect to $\omega_h$
\begin{equation} \label{GEDEFN}  \ecal_{h}(\mu) =
    \int_{\CP^1} U^{\mu}_{h}(z) d\mu(z)
=\int_{\CP^1} U^{\mu}_{h} (z) (dd^c U^{\mu}_{h} +
\omega_h)
=\int_{\CP^1} U^{\mu}_{h} (z) dd^c U^{\mu}_{h} ,
\end{equation} where we used (\ref{DGCAL}) in the second equality and the fact that
$\int U^{\mu}_{h} \omega_h = 0$ in the  last equation.

 In the next result, we outline a  proof of the  convexity of the energy
 functional for general
smooth Hermitian metrics. It is used in the proof of the convexity
of the
rate function in  Lemma \ref{CONVEX}. Convexity of the
energy  is well-known in weighted potential theory: It is
  proved in Lemma 1.8 of \cite{ST} that $- \Sigma(\mu) \geq 0$ in
  the case where
  case where $\mu = \mu_1 - \mu_2$ is a signed Borel measure with
  compact support, where $\mu(\C) = 0$ and each of $\mu_1, \mu_2$
  satisfies $- \Sigma(\mu_j) < \infty$. A different proof is given
  in \cite{BG}, Property 2.1(4) and another in Proposition 5.5 of
  \cite{BB}. We give a somewhat different proof in our setting of
  $\CP^1$.

We define the energy form  on $\mcal(\CP^1)$ by
\begin{equation} \label{ENERGYNORM} \langle \mu, \nu \rangle_{\omega} := \int_{\CP^1} G_{\omega}(z,w) d\mu(z) d\nu(w) =  \int_{\CP^1}
U^{\mu}_{\omega} d\nu = \int_{\CP^1} U^{\nu}_{\omega} d\mu.
\end{equation}
As in \cite{C}, we denote the probability measures of finite
energy $||\mu||^2_{\omega} < \infty$  by $\ecal^+(\CP^1) \subset
\mcal(\CP^1)$.

 \begin{prop}\label{CONVEXITY} For any smooth Hermitian metric on $\ocal(1)$,  $- \ecal_h$ is a convex
     functional on $\mcal(\CP^1)$.

\end{prop}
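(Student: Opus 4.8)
The plan is to reduce the convexity of $-\ecal_h$ to the positive-definiteness of the energy pairing $\langle\mu,\mu\rangle_\omega$ on signed measures of total mass zero, which is the classical statement that the logarithmic (Green's) energy is a negative-definite kernel. First I would recall from \eqref{GEDEFN} that $\ecal_h(\mu)=\int_{\CP^1}U^\mu_h\,dd^c U^\mu_h = -\|dU^\mu_h\|^2_{L^2}$ up to a positive constant (integrating by parts, using $dd^c=\frac{i}{2\pi}\ddbar$), so that $-\ecal_h(\mu)$ is, morally, a squared Dirichlet norm of the potential. The key algebraic point is that the map $\mu\mapsto U^\mu_h$ is affine: for $\mu_0,\mu_1\in\mcal(\CP^1)$ and $\mu_t=(1-t)\mu_0+t\mu_1$ one has $U^{\mu_t}_h=(1-t)U^{\mu_0}_h+tU^{\mu_1}_h$, because $dd^c U^{\mu_t}_h=\mu_t-\omega_h$ and $\int U^{\mu_t}_h\omega_h=0$ pin down the potential uniquely (this is exactly the characterization in \eqref{DGCAL} together with the normalization).

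Given this, the computation is routine: writing $\sigma=\mu_1-\mu_0$, a signed measure of mass zero, one expands
\begin{equation}
\ecal_h(\mu_t)=\ecal_h(\mu_0)+2t\langle\mu_0,\sigma\rangle_{\omega_h}+t^2\langle\sigma,\sigma\rangle_{\omega_h},
\end{equation}
so that $-\ecal_h(\mu_t)$ is a quadratic polynomial in $t$ with leading coefficient $-\langle\sigma,\sigma\rangle_{\omega_h}$. Convexity of $-\ecal_h$ is therefore equivalent to the inequality $\langle\sigma,\sigma\rangle_{\omega_h}\le 0$ for every mass-zero signed measure $\sigma$ arising as a difference of two probability measures (and hence, by scaling, for all mass-zero signed measures of finite energy). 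So the whole proposition comes down to establishing this single energy inequality on $\CP^1$.

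To prove $\langle\sigma,\sigma\rangle_{\omega_h}\le 0$, I would use the representation $\langle\sigma,\sigma\rangle_{\omega_h}=\int_{\CP^1}U^\sigma_{\omega_h}\,dd^c U^\sigma_{\omega_h}=-\frac{1}{2\pi}\int_{\CP^1}\partial U^\sigma_{\omega_h}\wedge\bar\partial U^\sigma_{\omega_h}$ after integration by parts (the $\omega_h$ terms drop out because $\int U^\sigma_{\omega_h}\,\omega_h=\sigma(\CP^1)\cdot(\text{const})=0$ and $\sigma$ has mass zero, so $U^\sigma_{\omega_h}$ is genuinely the potential of a mass-zero current independent of the choice of reference form). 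Since $\partial U^\sigma\wedge\bar\partial U^\sigma = \frac{i}{2}|\partial U^\sigma|^2\,(\text{volume form})$ up to positive constants, the integral is manifestly $\le 0$. The one point requiring care — and the step I expect to be the main obstacle — is the justification of the integration by parts when $U^\sigma_{\omega_h}$ is merely $L^1$ with a logarithmic singularity rather than smooth; this is handled by a standard regularization/truncation argument (approximate $\sigma$ by smooth densities, or excise $\ep$-balls around the atoms/singularities and control the boundary terms, which vanish because $U^\sigma$ has only logarithmic growth while the boundary length is $O(\ep)$), together with the finiteness of energy guaranteed for the relevant measures by Lemma \ref{lem-ue} and Proposition \ref{LSCJG} referenced earlier. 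Once the inequality $\langle\sigma,\sigma\rangle_{\omega_h}\le 0$ is in hand, convexity of $-\ecal_h$ on all of $\mcal(\CP^1)$ follows immediately from the quadratic expansion above (with the convention that $-\ecal_h(\mu)=+\infty$ when the energy is not finite, which is consistent with lower semicontinuity and does not affect the convexity inequality).
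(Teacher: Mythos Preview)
Your argument is correct and follows the same overall architecture as the paper's: both reduce convexity of $-\ecal_h$ to the inequality $\langle\sigma,\sigma\rangle_{\omega_h}\le 0$ for mass-zero signed measures $\sigma=\mu_1-\mu_0$, via exactly the quadratic expansion you write down. The difference lies in how that inequality is established. The paper first proves it for a genuine K\"ahler reference form $\omega$ by the eigenfunction expansion $\langle\sigma,\sigma\rangle_\omega=\sum_{j\ge1}\lambda_j^{-1}|\sigma(\phi_j)|^2\le0$ (Lemma~\ref{NONPOS}), and then for a possibly non-positive $\omega_h$ observes (Lemma~\ref{ENERGYFORMS}, see also \eqref{CURIOUS}) that $\langle\cdot,\cdot\rangle_{\omega_h}$ and $\langle\cdot,\cdot\rangle_\omega$ differ only by terms linear in each argument, which vanish on mass-zero measures and in any case do not affect convexity. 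You instead go straight to the Dirichlet representation $\langle\sigma,\sigma\rangle_{\omega_h}=\int U^\sigma_{\omega_h}\,dd^c U^\sigma_{\omega_h}=-\int dU^\sigma_{\omega_h}\wedge d^c U^\sigma_{\omega_h}\le0$, which works uniformly in $h$ and bypasses both the spectral theory and the auxiliary K\"ahler form. Your route is more elementary and self-contained; the paper's route has the advantage that the regularization/justification of the integration by parts you flag as the delicate step is absorbed into the well-posedness of the eigenfunction series. One small correction: the vanishing of $\int U^\sigma_{\omega_h}\,\omega_h$ comes directly from the normalization of the Green's potential (each $U^{\mu_i}_h$ has zero $\omega_h$-mean), not from $\sigma(\CP^1)=0$; the mass-zero condition is what makes $dd^c U^\sigma_{\omega_h}=\sigma$ without an $\omega_h$ correction.
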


\begin{proof}

We first prove convexity when $\omega$ is a \kahler metric.
  \begin{lem} \label{NONPOS}
When $\omega$ is a \kahler metric, the energy form $\langle \mu,
\nu \rangle_{\omega}$ is negative semi-definite   on signed
measures of finite energy. The unique measures of energy zero are
multiples of $\omega$.
\end{lem}
\begin{proof}

 From the eigenfunction expansion (\ref{EFNEXP}), it follows
that
 \begin{equation} \label{GRINNPROD} \langle\mu, \nu\rangle_{\omega}
     = \int_{\CP^1\times \CP^1} G_{\omega}(z,w)
 d\mu(z) d\nu(z) =  \sum_{j
 = 1}^{\infty} \frac{\mu(\phi_j) \nu (\phi_j)}{\lambda_j}.
 \end{equation}
It is clear that for any signed measure $\mu$,  $\langle\mu,
\mu\rangle_{\omega} \leq 0$ with equality if and only if $\mu(\phi_j) =
0$ for all $j = 1, 2, \dots.$ The constant term has been removed
from the sum, so this case of equality is only possible if and
only if  $\mu = C \omega$ for some constant $C$.

\end{proof}

We then let $h$ be a general smooth metric. The following lemma
follows  immediately from the identity (\ref{GOMGH}).

\begin{lem} \label{ENERGYFORMS}Let $h$ be any smooth Hermitian metric, and let
$\omega$ be a \kahler form with $\int_{\CP^1} \omega =
\int_{\CP^1} \omega_h. $ Then, their energy forms are related by
$$\langle \mu, \nu\rangle_{\omega_h} = \langle \mu,
\nu\rangle_{\omega} + \nu(\CP^1) \int U_{\omega}^{\omega_h} d \mu
+ \mu(\CP^1) \int U_{\omega}^{\omega_h} d\nu.
$$

\end{lem}

It follows that on $\mcal(\CP^1)$, $- \ecal_h$ is a convex
function.

\end{proof}

\subsection{\label{GFL2} Green's function and $L^2$ norms}

\begin{lem} \label{GREENNORM}
    Let $G_h$ be the Green's function relative to  $\omega_h$.  Then,
$$(i)\;\; e^{ \int_{\CP^1} G_{h}(z,w) dd^c \log
||s(w)||_{h^N}^2 } = ||s||_{h^N}^2(z) e^{ - \int_{\CP^1} \log
||s||_{h^N}^2(z) \omega_h}, $$ and
$$(ii)\; \begin{array}{lll} \int_{\CP^1 \times \CP^1 \backslash D} G_{h}(z,w) dd^c \log
||s(z)||_{h^N}^2  \boxtimes dd^c \log ||s(w)||_{h^N}^2  & = &
\int_{\CP^1 \times \CP^1 \backslash D} G_{h}(z,w) Z_s \boxtimes
Z_s.
\end{array} $$
\end{lem}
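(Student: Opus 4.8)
Here is my plan for proving the two parts of Lemma \ref{GREENNORM}.

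\medskip

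\emph{Proof plan.} Both identities reduce to combining the Poincar\'e--Lelong formula \eqref{PL} with the defining properties \eqref{GHDEF} of $G_h$, the only genuine difficulty being the bookkeeping of logarithmic singularities in the currents one pairs. Throughout, set $u=u_s:=\log\|s\|_{h^N}^2\in L^1(\CP^1)$; by \eqref{PL}, $dd^c u=N\,(Z_s-\omega_h)$ as currents on $\CP^1$, where $NZ_s=\sum_j\delta_{\zeta_j}$ is the zero divisor of $s$. I will prove (i) from the self-adjointness of the $dd^c$-pairing against the kernel $G_h(z,\cdot)$, and (ii) by inserting this expression for $dd^c u$ into both factors and expanding bilinearly.

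For (i), fix $z$ outside the finite zero set of $s$; then $u$ is smooth near $z$, while by Proposition \ref{prop-grub} (see \eqref{HADAMARD}) $G_h(z,\cdot)$ is smooth away from $z$ and has there a single logarithmic pole, disjoint from the poles of $u$. The integration-by-parts identities of \S\ref{IP} then yield
\[
\int_{\CP^1}G_h(z,w)\,dd^c_w u(w)\;=\;\int_{\CP^1}u(w)\,dd^c_w G_h(z,w)\;=\;\int_{\CP^1}u(w)\bigl(\delta_z(w)-\omega_h(w)\bigr)\;=\;u(z)-\int_{\CP^1}u\,\omega_h\,,
\]
the middle equality being condition (i) of \eqref{GHDEF}. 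Exponentiating and recalling $u=\log\|s\|_{h^N}^2$ is exactly the identity (i) for such $z$; since $\|s(z)\|_{h^N}^2$ and the left side of (i) are both continuous in $z$, valued in $[0,\infty)$, and agree off the zeros of $s$, (i) holds on all of $\CP^1$. (One can also verify (i) directly by plugging the local formula \eqref{GHFORMintro} for $G_h$ into $\sum_j G_h(z,\zeta_j)$ and using the evaluation of $\int_{\CP^1}\log\|s\|_{h^N}^2\,\omega_h$ supplied by Lemma \ref{NORMSUPHI}.)

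For (ii), substitute $dd^c_z u(z)=N(Z_s-\omega_h)(z)$ and $dd^c_w u(w)=N(Z_s-\omega_h)(w)$ into the left side and expand the exterior product: it becomes a linear combination of the four integrals $\int_{\CP^1\times\CP^1\setminus D}G_h\,(A\boxtimes B)$ with $A,B\in\{Z_s,\omega_h\}$. Whenever $A=\omega_h$ (resp. $B=\omega_h$), integrate the $z$- (resp. $w$-) variable first: by conditions (ii)--(iii) of \eqref{GHDEF} one has $\int_{\CP^1}G_h(z,w)\,\omega_h=0$ in either variable, so that integral vanishes; moreover, because $\omega_h$ is a smooth form, such a product $A\boxtimes B$ carries no mass on the diagonal $D$, so discarding $D$ is immaterial in those three terms. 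Hence only the $A=B=Z_s$ term survives, and --- with the normalization $dd^c\log\|s\|_{h^N}^2=N(Z_s-\omega_h)$ understood --- this is the right-hand side of (ii); the surviving integral is finite because on $\CP^1\times\CP^1\setminus D$ the logarithmic singularity of $G_h$ is never met by the atoms of $Z_s\boxtimes Z_s$, all of which sit over $D$.

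The step demanding the most care is the manipulation of the current pairings where the factors are not smooth --- the self-adjoint integration by parts used in (i), and the ``integrate the smooth slot first'' interchange used in (ii). Both are instances of the integration-by-parts formulas of \S\ref{IP}, proved in the standard way: excise small geodesic discs about the singular points, apply Stokes/Green on the complement, and let the radii shrink to $0$, the boundary contributions being $O(r\log r)\to0$. The quantitative input this needs is precisely the Hadamard-type expansion \eqref{HADAMARD} and the upper bound $G_h\le C_G$ of Proposition \ref{prop-grub}.
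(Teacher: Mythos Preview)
Your proposal is correct and follows essentially the same approach as the paper: for (i) you move $dd^c$ onto $G_h(z,\cdot)$ and invoke \eqref{GHDEF}(i), and for (ii) you substitute $dd^c\log\|s\|_{h^N}^2=N(Z_s-\omega_h)$ in both slots, expand, and kill the three mixed terms using $\int G_h\,\omega_h=0$. You are more explicit than the paper about separating the logarithmic singularities of $u$ and of $G_h(z,\cdot)$ in (i), and about why removing the diagonal is harmless in the $\omega_h$-terms of (ii); one small quibble is that your appeal to \S\ref{IP} is slightly misplaced, since that section treats integrals over $\C$ with a residue at infinity, whereas here the point is simply that $\CP^1$ is compact so the disc-excision argument you describe produces no boundary term.
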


\begin{proof}

    For the first formula, we  take
    the logarithm of both sides, and  integrate $dd^c$ by parts in
the integral $ \int_{\CP^1} G_{h}(z,w) dd^c \log ||s(w)||_{h^N}^2
$. It is possible since both factors are global currents on
$\CP^1$.   The resulting integral equals $\log ||s(z)||_{h^N}^2 -
\int_{\CP^1} \log ||s(z)||_{h^N}^2 \omega_h. $

For (ii) we write $\frac{1}{N} dd^c \log ||s(z)||_{h^N}^2 = Z_s -
\omega_h$. Then

$$\begin{array}{lll} (ii) & = &
\int_{\CP^1 \times \CP^1 \backslash D} G_{h}(z,w) (Z_s -
\omega_h) \boxtimes (Z_s - \omega_h) \\ && \\
& =& \int_{\CP^1 \times \CP^1 \backslash D} G_{h}(z,w) Z_s
\boxtimes Z_s  \\ && \\
&& - 2 \int_{\CP^1 \times \CP^1 \backslash D} G_{h}(z,w) Z_s
\boxtimes  \omega_h + \int_{\CP^1 \times \CP^1 \backslash D}
G_{h}(z,w)  \omega_h \boxtimes  \omega_h\\ && \\ & =& \int_{\CP^1
\times \CP^1 \backslash D} G_{h}(z,w) Z_s \boxtimes Z_s ,
\end{array}$$
since $\int G_h(z,w) \omega_h = 0$ when integrating in either $z$
or $w$. By Proposition \ref{prop-grub}, $G_h \in L^1(\CP^1,
\omega_h)$;  the integral over $\CP^1 \times \CP^1 \backslash D$
in the last terms is the same as over $\CP^1 \times \CP^1$.
\end{proof}

\begin{cor} \label{GREENENERGY} We have:
$$\begin{array}{lll} e^{\int_{\CP^1 \times \CP^1 \backslash D} G_{h}(z,w)
    dd^c \log
||s_{\zeta}(z)||_{h^N}^2  \boxtimes dd^c \log
||s_{\zeta}(w)||_{h^N}^2 } & = & e^{\sum_{i \not= j} G_h(\zeta_i,
\zeta_j)}
\end{array}$$
\end{cor}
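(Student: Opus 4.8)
The plan is to read off the left-hand exponent from the Poincar\'e-Lelong formula together with Lemma~\ref{GREENNORM}(ii), and then to evaluate the resulting pairing of $G_h$ against a finite sum of point masses. First I would record the divisor structure of $s_\zeta=\prod_{j=1}^N(z-\zeta_j)e^N(z)$: applying (\ref{PL}) gives $dd^c\log\|s_\zeta(\cdot)\|_{h^N}^2=\sum_{j=1}^N\delta_{\zeta_j}-N\omega_h$, that is, this current is the (unnormalized) zero divisor of $s_\zeta$ modulo the smooth term $-N\omega_h$.

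Substituting this into the $\boxtimes$-product on the left of the asserted identity and expanding, every term carrying at least one factor of $\omega_h$ pairs against $G_h$ to give zero, since $\int_{\CP^1}G_h(z,w)\,\omega_h(w)=0$ for each fixed $z$ by condition (iii) of (\ref{GHDEF}) (and symmetrically in $w$), while $G_h(z,\cdot)\in L^1(\CP^1,\omega_h)$ by Proposition~\ref{prop-grub}, so the excision of the diagonal $D$ is immaterial for those terms. This is exactly the computation already carried out in Lemma~\ref{GREENNORM}(ii), which I would invoke directly; it reduces the left-hand exponent to
$$\int_{\CP^1\times\CP^1\backslash D}G_h(z,w)\Big(\sum_{i=1}^N\delta_{\zeta_i}\Big)\boxtimes\Big(\sum_{j=1}^N\delta_{\zeta_j}\Big)=\int_{\CP^1\times\CP^1\backslash D}G_h(z,w)\sum_{i,j}\delta_{(\zeta_i,\zeta_j)}\,.$$
For a configuration of distinct points $\zeta_1,\dots,\zeta_N$ --- the only case that matters, since the complementary configurations form a Lebesgue-null set --- the support of $\sum_{i,j}\delta_{(\zeta_i,\zeta_j)}$ meets $D=\{(z,z)\}$ precisely in the points $(\zeta_i,\zeta_i)$, so restricting to $\CP^1\times\CP^1\backslash D$ deletes exactly the terms with $i=j$. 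By Proposition~\ref{prop-grub}, $G_h$ is smooth, hence continuous and finite, off $D$, so the surviving pairing is literal evaluation: the left-hand exponent equals $\sum_{i\ne j}G_h(\zeta_i,\zeta_j)$, and exponentiating both sides yields the corollary.

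The one point that deserves care is that the integral on the left is genuinely improper along the diagonal, where $G_h(z,w)=2\log|z-w|+\rho(z)+O(|z-w|)$ by Proposition~\ref{prop-grub}, and that the product current $\big(\sum_i\delta_{\zeta_i}\big)\boxtimes\big(\sum_j\delta_{\zeta_j}\big)$ itself charges $D$; both are handled by the excision of $D$, and the legitimacy of that excision --- that the integration by parts underlying Lemma~\ref{GREENNORM}(ii) leaves no diagonal boundary contribution --- is precisely what that lemma, together with the $L^1$ regularity of $G_h$, supplies. Everything beyond invoking Lemma~\ref{GREENNORM}(ii) is then bookkeeping: expanding the $\boxtimes$-product, discarding the $\omega_h$ terms, and summing over the point masses.
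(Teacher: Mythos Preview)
Your proposal is correct and follows essentially the same route as the paper. The paper states this as an immediate corollary of Lemma~\ref{GREENNORM}(ii) without further argument; your write-up simply unpacks what that lemma already says --- the $\omega_h$ cross terms vanish by condition (iii) of (\ref{GHDEF}), leaving $\int_{\CP^1\times\CP^1\backslash D}G_h\,Z_s\boxtimes Z_s$ --- and then evaluates that pairing on the point masses, which is the only step the paper leaves to the reader.
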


\subsection{\label{JPCGF} Completion of proof of Proposition \ref{FSVOLZETA2intro}}

We now  complete the proof of Proposition \ref{FSVOLZETA2intro},
which was started in \S \ref{DETG0}. The purpose of this section
is to convert the local expression \eqref{eq-030209b} (see also
\eqref{eq-030209d}) for the joint probability current into a
global invariant expression. We prove:

\begin{lem} \label{JPDINV}
    Let $h = e^{- \phi}$ be a
    smooth Hermitian metric on $\ocal(1)$, and let
    $\omega_h, G_h$ be as above.  Let
    $s_{\zeta}(z) = \prod_{j = 1}^N (z - \zeta_j) e^N$.
    Then, the  joint probability current is given by:
$$ \begin{array}{lll}  \frac{|\det \acal_N(h, \nu) |^2 |\Delta(\zeta_1, \dots,
\zeta_N)|^2\prod_{j = 1}^N d^2 \zeta_j }{\left(\int_{\CP^1}
\prod_{j = 1}^N |(z - \zeta_j)|^2 e^{-  N \phi} d\nu(z)
\right)^{N+1}} & = &   \frac{\exp \left(\frac{1}{2} \sum_{i \not=
j} G_{h}(\zeta_i, \zeta_j) \right) }{\left(\int_{\CP^1} e^{
\int_{\CP^1} G_{h}(z,w) d\mu_{\zeta}(w)} d\nu(z) \right)^{N+1}}
\left(\prod_{j = 1}^N e^{- 2  \phi(\zeta_j)} d^2
\zeta_j \right) \\ && \\
& \times & |\det \acal_N(h, \nu) |^2  e^{(-\frac{1}{2}N (N - 1) +N
(N + 1)) E(h)} ,
\end{array}$$
where $E(h)$ is defined in (\ref{EH}) and $\acal_N$ is defined in
(\ref{ACALH}).  Moreover, $\prod_{j = 1}^N e^{- 2  \phi(\zeta_j)}
d^2 \zeta_j$ extends to a global smooth $(N, N)$ form
$\kappa_N$ on
$(\CP^1)^N$.
\end{lem}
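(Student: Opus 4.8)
The plan is to start from the already-established local formula \eqref{eq-030209b} of Proposition \ref{FSVOLZETA2intro} and transform its ingredients, one factor at a time, into the global Green's-function expressions using the results of \S\ref{HGF}--\S\ref{GFL2}. First I would rewrite the Vandermonde factor: since $|\Delta(\zeta)|^2 = \exp\big(\sum_{i\neq j}\log|\zeta_i-\zeta_j|\big)$, and by Lemma \ref{GHFORM} (equation \eqref{GHFORMa}) $2\log|\zeta_i-\zeta_j| = G_h(\zeta_i,\zeta_j) + \phi(\zeta_i)+\phi(\zeta_j) - E(h)$, summing over the $N(N-1)$ ordered pairs $i\neq j$ gives
$$ |\Delta(\zeta)|^2 = \exp\Big(\tfrac12\sum_{i\neq j}G_h(\zeta_i,\zeta_j)\Big)\,\exp\Big((N-1)\sum_{j=1}^N\phi(\zeta_j)\Big)\,e^{-\frac12 N(N-1)E(h)}. $$
This produces the claimed $\exp(\tfrac12\sum_{i\neq j}G_h(\zeta_i,\zeta_j))$ numerator, the power $e^{-\frac12 N(N-1)E(h)}$ of the constant, and a spurious factor $\prod_j e^{(N-1)\phi(\zeta_j)}$ that will have to be absorbed.

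Next I would treat the denominator. By Lemma \ref{GREENNORM}(i) applied to $s = s_\zeta$, or equivalently by the identity \eqref{LOGMUZ} together with Lemma \ref{NORMSUPHI}, we have $\int_{\CP^1}\prod_j|z-\zeta_j|^2 e^{-N\phi(z)}\,d\nu(z) = \int_{\CP^1}\|s_\zeta(z)\|_{h^N}^2\,d\nu(z) = e^{\,N\int_{\CP^1}\log\|s_\zeta\|_{h^N}^2\,\omega_h}\int_{\CP^1}e^{\int_{\CP^1}G_h(z,w)\,d\mu_\zeta(w)}\,d\nu(z)$, and by the second bullet of Lemma \ref{NORMSUPHI} the exponent $N\int\log\|s_\zeta\|_{h^N}^2\,\omega_h = N\cdot N(\int\phi\,d\mu_\zeta - E(h)) = N\sum_j\phi(\zeta_j) - N^2 E(h)$. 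Raising to the $(N+1)$-st power, the denominator becomes $\big(\int e^{\int G_h\,d\mu_\zeta}\,d\nu\big)^{N+1}$ times $\exp\big(N(N+1)\sum_j\phi(\zeta_j)\big)$ times $e^{-N(N+1)N E(h)}$ --- wait, that is not quite the constant claimed, so I would be careful: the constant $e^{+N(N+1)E(h)}$ in the statement comes from moving the $e^{-N^2 E(h)}$ exponent (raised to $N+1$) through; I would track the bookkeeping so that combining numerator and denominator contributions yields exactly the exponent $-\frac12 N(N-1) + N(N+1)$ on $E(h)$ as stated. The point here is routine but exacting exponent arithmetic.

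The key cancellation is of the $\phi(\zeta_j)$ powers. Collecting: the Vandermonde contributes $\prod_j e^{(N-1)\phi(\zeta_j)}$ to the numerator; the denominator, after the rewrite, contributes $\prod_j e^{N(N+1)\phi(\zeta_j)}$ which moves to the numerator as $\prod_j e^{-N(N+1)\phi(\zeta_j)}$; and the original measure carries $d^2\zeta_j$ with no $\phi$. Putting the target right-hand side's $\prod_j e^{-2\phi(\zeta_j)}\,d^2\zeta_j$ next to this, the residual $\phi$-powers must match; I would verify that $(N-1) - N(N+1) = -N^2 - 1$ balances against what the left side already supplies after the $|\det\acal_N|^2$ factors are carried along unchanged on both sides --- since that determinant is independent of $\zeta$ it simply passes through. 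I expect the main obstacle to be exactly this exponent accounting: getting every power of $e^{\phi(\zeta_j)}$, every power of $N$, and every power of $e^{E(h)}$ to land correctly, since there are four separate sources ($\Delta$, the $(N+1)$-st power of the integral, the $\prod e^{-2\phi(\zeta_j)}$ target, and the definition \eqref{GHFORMa} of $G_h$) each contributing $\phi$- and $E(h)$-terms with different multiplicities. Finally, for the last sentence, $\prod_{j=1}^N e^{-2\phi(\zeta_j)}\,d^2\zeta_j$ extends to a global smooth $(N,N)$ form $\kappa_N$ on $(\CP^1)^N$ simply because it is the exterior tensor product $\kappa\boxtimes\cdots\boxtimes\kappa$ ($N$ factors) of the form $\kappa$ of Lemma \ref{INVAR}, which was already shown there to extend globally and smoothly to $\CP^1$; invariance under each coordinate inversion $\sigma(\zeta_j)=1/\zeta_j$ follows factorwise from that lemma, so no further work is needed.
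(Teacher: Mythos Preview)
Your plan is essentially the same as the paper's proof: rewrite $|\Delta(\zeta)|^2$ via the local formula \eqref{GHFORMa} for $G_h$, rewrite the denominator via Lemma~\ref{GREENNORM}(i) and Lemma~\ref{NORMSUPHI}, collect the $\phi(\zeta_j)$- and $E(h)$-exponents, and quote Lemma~\ref{INVAR} factorwise for the global extension of $\kappa_N$. The only issue is the bookkeeping slip you yourself flagged: in your denominator identity you placed the factor of $N$ on the wrong exponent. Lemma~\ref{GREENNORM}(i) together with \eqref{LOGMUZ} gives
\[
\int_{\CP^1}\|s_\zeta\|_{h^N}^2\,d\nu \;=\; e^{\,\int\log\|s_\zeta\|_{h^N}^2\,\omega_h}\int_{\CP^1} e^{\,N\!\int G_h(z,w)\,d\mu_\zeta(w)}\,d\nu(z),
\]
with the $N$ inside the Green's integral and \emph{no} extra $N$ on the other exponent (and indeed the displayed denominator in the lemma statement is missing that $N$, which is likely what misled you; compare with \eqref{eq-030209d}). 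With this correction, Lemma~\ref{NORMSUPHI} gives $\int\log\|s_\zeta\|_{h^N}^2\,\omega_h = \sum_j\phi(\zeta_j)-NE(h)$, so raising to the $(N{+}1)$st power produces $\exp\big((N{+}1)\sum_j\phi(\zeta_j)\big)\,e^{-N(N+1)E(h)}$; combined with the Vandermonde contribution $(N{-}1)\sum_j\phi(\zeta_j)$ you get exactly $(N{-}1)-(N{+}1)=-2$ for the $\phi$-exponent and $-\tfrac12 N(N{-}1)+N(N{+}1)$ for the $E(h)$-exponent, as claimed.
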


\begin{proof} We first claim that
\begin{equation}\label{DELTASQUARE}  |\Delta(\zeta)|^2  =  \exp \left( \sum_{i < j} G_{h}(\zeta_i,
\zeta_j) \right) \exp\left(   (N - 1) \sum_j \phi(\zeta_j) -
 \frac{1}{2} (N - 1) N  E(h) \right).
\end{equation}

Indeed, by   Lemma \ref{GHFORM},
$$  2 \log |z - w| = G_{h}(z,w) +  \phi(z) + \phi(w) -  E(h).  $$
We note that $\log |\Delta(\zeta)|^2 = 2 \sum_{i < j} \log
|\zeta_i - \zeta_j|$ and that
\begin{equation}\begin{array}{lll} 2  \sum_{i < j} \log |\zeta_i -
\zeta_j| & = &  \sum_{i < j} G_{h}( \zeta_i,  \zeta_j) +
 \sum_{i
< j} ( \phi(\zeta_i) + \phi(\zeta_j)) -   E(h)) \\
&& \\
& = &  \sum_{i < j}G_{h}( \zeta_i,  \zeta_j)  +  (N - 1)
\sum_j \phi(\zeta_j) -  \frac{1}{2} (N - 1) N  E(h)\\&& \\
& = &  \sum_{i < j}G_{h}( \zeta_i,  \zeta_j)  +  N (N - 1) \int
\phi d\mu_{\zeta} - \frac{1}{2} (N - 1) N  E(h).
\end{array}
\end{equation}

We then convert the denominator into the Green's function
expression by the identities \begin{equation} \label{GRID}
\begin{array}{lll}\int_{\CP^1} \prod_{j = 1}^N |(z - \zeta_j)|^2
e^{-  N \phi} d\nu(z) & = & \int_{\CP^1} ||s_{\zeta}(z)||^2_{h^N}
d\nu(z)\\ && \\
& = & \left( \int_{\CP^1} e^{\int_{\CP^1} G_{h}(z,w) dd^c \log
||s_{\zeta}(w)||_{h^N}^2} d\nu \right) e^{  \int_{\CP^1} \log
||s_{\zeta}||_{h^N}^2(z) \omega_h} \\ && \\
& = & \left( \int_{\CP^1} e^{N \int_{\CP^1} G_{h}(z,w)
d\mu_{\zeta}(w)} d\nu \right) e^{  \int_{\CP^1} \log
||s_{\zeta}||_{h^N}^2(z) \omega_h}
\end{array} \end{equation} by   Lemmas \ref{LOGMUZ} and \ref{GREENNORM} (i).
  Further, by Lemma \ref{NORMSUPHI},
$$  \int \log
||s_{\zeta} (w)||_{h^N}^2  \omega_h = N (\int \phi d\mu_{\zeta} -
E(h)). $$

We now raise the denominator (\ref{GRID}) to the power $- (N + 1)$
and multiply by (\ref{DELTASQUARE}) to obtain the Green's
expression
$$\frac{\exp \left( \sum_{i < j}
G_{h}(\zeta_i, \zeta_j) \right) }{\left(\int_{\CP^1} e^{
\int_{\CP^1} G_{h}(z,w) d\mu_{\zeta}(w) } d\nu(z) \right)^{N+1}}
$$ multiplied by the exponential of
$$ \begin{array}{l}  (N - 1) N \int \phi d\mu_{\zeta}-
 \frac{1}{2} (N - 1) N  E(h)
 -  N (N + 1)  (\int \phi d\mu_{\zeta} - E(h)).
\end{array}$$
We  note the   cancellation in the $N^2$ term of $\int \phi
d\mu_{\zeta}$, leaving $- 2 N \int \phi d\mu_{\zeta} = -2 \sum_j
\phi(\zeta_j)$ This gives the stated result. The last statement
follows from Lemma \ref{INVAR}.

\end{proof}

\subsection{The approximate rate function $I_N$}

Lemma \ref{JPDINV} expresses the joint probability  $\vec
K_n^N(\zeta_1, \dots, \zeta_N)$ as a geometric $(N + 1, N + 1)$
form on configuration space. In order to extract a rate function,
we further express it as a functional of the measures
$\mu_{\zeta}$. We introduce the following functionals.

\begin{defin}  \label{DEFIN}  Let $\zeta \in (\CP^1)^{(N)}$ and let $\mu_{\zeta}$ be as in (\ref{ZN}).
Let $D=\{(z,z): z\in \CP^1\}$ be the diagonal. Put:
$$ \left\{ \begin{array}{l} \ecal^h_N(\mu_{\zeta}) = \int_{\CP^1 \times \CP^1 \backslash D}
G_{h}(z,w) d\mu_{\zeta}(z) d\mu_{\zeta}(w), \\ \\
 J_N^{h, \nu} (\mu_{\zeta}) = \log ||e^{U_h^{\mu_{\zeta}}}||_{L^N(\nu)}
 \end{array}
\right.$$
\end{defin}

\begin{lem} \label{APPROXRATE} We have
$$\vec K_n^N(\zeta_1, \dots, \zeta_N) = \frac{1}{\hat{Z}_N(h)} e^{- N^2 \left( -\frac{1}{2}
\ecal^h_N(\mu_{\zeta}) + \frac{N+1}{N}  J_N^{h, \nu} (\mu_{\zeta})\right)}
\kappa_N
$$
\end{lem}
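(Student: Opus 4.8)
The plan is to take Lemma \ref{JPDINV}, which already expresses $\vec K_n^N$ as a product of a Green's-function factor, a $\kappa_N$-type factor, and explicit constants, and simply regroup the exponents so that they organize into the two functionals $\ecal_N^h(\mu_\zeta)$ and $J_N^{h,\nu}(\mu_\zeta)$ of Definition \ref{DEFIN}, times the normalizing constant $\hat Z_N(h)$ from \eqref{NORMALIZINGCONSTANTS}. So this is a bookkeeping computation, not a conceptual step; the work is in tracking powers of $N$ and sign conventions.

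First I would rewrite the numerator $\exp\big(\frac12\sum_{i\neq j}G_h(\zeta_i,\zeta_j)\big)$ appearing in Lemma \ref{JPDINV}. Since $\mu_\zeta=\frac1N\sum_j\delta_{\zeta_j}$, the off-diagonal sum satisfies $\sum_{i\neq j}G_h(\zeta_i,\zeta_j)=N^2\ecal_N^h(\mu_\zeta)$ directly by the definition of $\ecal_N^h$ (the diagonal is excised in both). Hence the factor $\exp\big(\frac12\sum_{i<j}G_h\big)$ — note Lemma \ref{JPDINV} has $\sum_{i<j}$ on the right side, which equals $\frac12\sum_{i\neq j}$ — becomes $\exp\big(\frac12 N^2\ecal_N^h(\mu_\zeta)\big)$, contributing the $+\frac12 N^2\ecal_N^h$ inside the exponent, i.e. $-N^2\cdot(-\frac12\ecal_N^h)$. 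Next, the denominator: by Lemma \ref{NORMSUPHI} (third bullet), $\int_{\CP^1}e^{\int G_h(z,w)d\mu_\zeta(w)}d\nu(z)$ is exactly $\int_{\CP^1}e^{U_h^{\mu_\zeta}(z)}d\nu(z)$ — wait, more precisely one has to be careful: the exponent in Lemma \ref{JPDINV} is $\int_{\CP^1}G_h(z,w)d\mu_\zeta(w)=U_h^{\mu_\zeta}(z)$, so the denominator is $\big(\int e^{U_h^{\mu_\zeta}}d\nu\big)^{N+1}=\big(\|e^{U_h^{\mu_\zeta}}\|_{L^N(\nu)}\big)^{N(N+1)}$, using that $\|\cdot\|_{L^N(\nu)}$ is the $L^N$ norm so the $N$-th power recovers the integral. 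Taking logarithms, the denominator contributes $-N(N+1)J_N^{h,\nu}(\mu_\zeta)=-N^2\cdot\frac{N+1}{N}J_N^{h,\nu}(\mu_\zeta)$ to the exponent. Finally, the remaining factor $\prod_j e^{-2\phi(\zeta_j)}d^2\zeta_j$ is precisely $\kappa_N$ (global by Lemma \ref{INVAR}/the last sentence of Lemma \ref{JPDINV}), and the leftover constants $|\det\acal_N(h,\nu)|^2\,e^{(-\frac12 N(N-1)+N(N+1))E(h)}$ match $\hat Z_N(h)^{-1}$ exactly by the second line of \eqref{NORMALIZINGCONSTANTS}.

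Assembling these, $\vec K_n^N(\zeta_1,\dots,\zeta_N)=\frac{1}{\hat Z_N(h)}\exp\big(\frac12 N^2\ecal_N^h(\mu_\zeta)-N(N+1)J_N^{h,\nu}(\mu_\zeta)\big)\kappa_N$, which is the claimed identity after factoring $-N^2$ out of the exponent. The only mild subtlety — and the step I'd be most careful about — is the consistent treatment of the $E(h)$ constants: the exponent $-\frac12 N(N-1)E(h)$ coming from $|\Delta|^2$ in \eqref{DELTASQUARE} and the $+N(N+1)E(h)$ coming from raising the denominator-correction $e^{\int\log\|s_\zeta\|^2\omega_h}=e^{N(\int\phi d\mu_\zeta-E(h))}$ to the power $-(N+1)$ must be collected into precisely the sign and combination that appears in $\hat Z_N(h)$ as recorded in \eqref{NORMALIZINGCONSTANTS}; since Lemma \ref{JPDINV} has already done this collection, the present lemma just reads it off, but it is worth double-checking that the $\int\phi d\mu_\zeta$ terms have genuinely cancelled (they do, as noted at the end of the proof of Lemma \ref{JPDINV}) so that nothing $\mu_\zeta$-dependent is hiding in the constant $\hat Z_N(h)$.
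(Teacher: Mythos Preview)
Your approach is the same as the paper's: take the expression from Lemma \ref{JPDINV}, recognize the numerator as $\exp(\tfrac12 N^2\ecal_N^h(\mu_\zeta))$, recognize the denominator as an $L^N$ norm raised to a power, and absorb the residual constants into $\hat Z_N(h)$. So structurally you are on target.

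There is, however, a genuine slip in your treatment of the denominator. You write $\big(\int e^{U_h^{\mu_\zeta}}\,d\nu\big)^{N+1}=\|e^{U_h^{\mu_\zeta}}\|_{L^N(\nu)}^{N(N+1)}$, justifying this by ``the $N$-th power recovers the integral.'' That is false: $\|f\|_{L^N(\nu)}^N=\int |f|^N\,d\nu$, not $\int f\,d\nu$. What is actually going on is that the exponent in the denominator of Lemma \ref{JPDINV} should be $N\int G_h(z,w)\,d\mu_\zeta(w)=N\,U_h^{\mu_\zeta}(z)$, not $U_h^{\mu_\zeta}(z)$; this is how it appears in \eqref{eq-030209d}, in \eqref{GRID}, and in the paper's own proof of Lemma \ref{APPROXRATE} (see \eqref{INV2}). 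With that $N$ in place, the correct identity is $\big(\int e^{N U_h^{\mu_\zeta}}\,d\nu\big)^{N+1}=\|e^{U_h^{\mu_\zeta}}\|_{L^N(\nu)}^{N(N+1)}$, and then your bookkeeping (including the $\kappa_N$ and $\hat Z_N(h)$ identifications) goes through exactly as you wrote. So the fix is not conceptual, but you do need to track the missing factor of $N$ rather than paper over it with an incorrect $L^N$-norm identity.
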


\begin{proof} We are simply rewriting
\begin{equation} \label{IN}   \frac{\exp \left(\frac{1}{2} \sum_{i \not= j}
G_{h}(\zeta_i, \zeta_j) \right) }{\left(\int_{\CP^1} e^{
\int_{\CP^1} G_{h}(z,w) d\mu_{\zeta}(w)} d\nu(z) \right)^{N+1}} =
e^{- N^2 I_N(\mu_{\zeta})}, \end{equation}  on the right side of
Lemma \ref{JPDINV} and leaving the other factors as they are.
Then,
\begin{equation} \label{INV2}
    \begin{array}{lll}
    I_N(\mu_{\zeta})
& = &  - \frac{1}{N^2}\sum_{i \not= } \frac{1}{2} G_{h}(\zeta_i,
\zeta_j) + \frac{N+1}{N^2} \log \left(\int_{\CP^1} e^{N
\int_{\CP^1} G_{h}(z,w) d\mu_{\zeta} } d\nu(z) \right)\nonumber \\
&& \quad
\quad \\
 & = &
 - \frac{1}{N^2}  \frac{1}{2} \int_{\CP^1 \times \CP^1
\backslash D} G_{h}(z,w) d\mu_{\zeta}(z) d\mu_{\zeta}(w) +
\frac{N+1}{N^2} \log \left(\int_{\CP^1} e^{N U^{\mu_{\zeta}}_h(z)
} d\nu(z) \right)  \nonumber  \\ && \\ & = & - \frac{1}{N^2}
\left(
 - \frac{1}{2}  \ecal_N^h(\mu_{\zeta}) +  \frac{N(N + 1)}{N^2}   J_N^{h, \nu} (\mu_{\zeta})\right).
\end{array}
\end{equation}
  \end{proof}

\section{\label{WEM} Weighted equilibrium measures }

In this section, we define the notion of weighted equilibrium
measure $\nu_{h, K}$ of a non-polar compact set $K$ with respect
to a Hermitian metric $h$ and prove that it is unique. In fact,
there are two characterizations of $\nu_{h, K}$:
\begin{itemize}
\item[(i)] $\nu_{h,K}$ is the
minimizer of the Green's energy functional among measures
supported on $K$.
\item[(ii)] The potential of $\nu_{h,K}$
is the maximal
$\omega_h$-subharmonic function of $K$.
\end{itemize}
We will need both
characterizations in order to prove that the unique minimizer of
the
function $I^{h, K}$ of (\ref{IGREEN}) is $\nu_{h, K}$.
The problem is that $I^{h, K}$ differs significantly from the
Green's energy on $K$ and it is not obvious that they have the
same minimizer.

In the classical case of weighted potential theory on $\C$, the
equivalence of the two definitions is proved in \cite{ST},
especially in the appendix by T. Bloom. Their framework of
admissible weights on $\C$ does not quite  apply directly to the
present setting of smooth Hermitian metrics on $\ocal(1)$ and
potential theory on $\CP^1$. The second definition (ii) is assumed
in work on potential theory on \kahler manifolds, e.g. as in
\cite{GZ}, Definition 4.1.  Only recently in \cite{BB} have
equilibrium measures been considered in terms of energy
minimization. As a result, there is no simple reference for the
facts we need, although their proofs are often small modifications
of known proofs in the weighted case on $\C$. In that event, we
only sketch the proof and refer the reader to the literature.

\subsection{Equilibrium measures as energy minimizers} We now
justify the first definition (i) by showing  that there exists a
unique energy minimizer (or maximizer, depending on the sign of
the energy functional) among measures supported on a non-polar set
$K$. We further prove  that weighted equilibrium measures $\nu_{h,
K}$ are unique and are supported on $K$ (Proposition \ref{EQMES}).
We recall that $K$ is  a polar set if $\ecal_h(\mu)=-\infty$
for every finite non-zero Borel measure $\mu$
supported in $K$. In particular, a set satisfying
\eqref{REGULAR} is non-polar.

Thus, we fix a  compact non-polar subset $K \subset \CP^1$ and
consider the  restriction of the energy functional
$\ecal_{\omega}: \mcal(K) \to \R$ of probability measures
supported on $K$.

 \begin{prop}\label{EQMES}  If $K \subset \CP^1$ is non-polar, then $\ecal_{\omega}$ is bounded above on $\mcal(K)$. It has
 a unique maximum $\nu_{K,\omega} \in \mcal(K)$.

\end{prop}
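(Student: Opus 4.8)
The plan is to run the classical argument for existence and uniqueness of equilibrium measures (as in \cite{ST}) in the setting of the Green's energy on $\CP^1$, using that $G_h$ is bounded above (Proposition \ref{prop-grub}) and that $-\ecal_h$ is convex (Proposition \ref{CONVEXITY}). First I would establish the upper bound: since $G_h(z,w)\le C_G$ everywhere, $\ecal_h(\mu)=\int_{\CP^1\times\CP^1}G_h(z,w)\,d\mu(z)\,d\mu(w)\le C_G$ for every $\mu\in\mcal(\CP^1)$, so in particular $\sup_{\mu\in\mcal(K)}\ecal_h(\mu)\le C_G<\infty$. Because $K$ is non-polar, there is at least one $\mu_0\in\mcal(K)$ with $\ecal_h(\mu_0)>-\infty$, so the supremum $V:=\sup_{\mu\in\mcal(K)}\ecal_h(\mu)$ is a finite real number.

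Next I would obtain a maximizer by a standard compactness/semicontinuity argument. The set $\mcal(K)$ is compact in the weak-$*$ topology (as $K\subset\CP^1$ is compact). The issue is that $G_h$ has a $-\infty$ logarithmic singularity on the diagonal, so $\ecal_h$ is only upper semicontinuous, not continuous, on $\mcal(K)$: writing $G_h=2\log|z-w|+(\text{smooth})$ locally, the function $(z,w)\mapsto G_h(z,w)$ is the decreasing limit of its bounded continuous truncations $G_h^{(M)}:=\max(G_h,-M)$, and $\mu\otimes\mu\mapsto\int G_h^{(M)}$ is continuous, so $\ecal_h=\inf_M\int G_h^{(M)}\,d\mu\,d\mu$ is an infimum of continuous functionals, hence upper semicontinuous. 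An u.s.c. functional on a compact space attains its supremum, so there exists $\nu_{K,\omega}\in\mcal(K)$ with $\ecal_h(\nu_{K,\omega})=V>-\infty$.

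For uniqueness I would invoke strict convexity of $-\ecal_h$ on measures of finite energy. By Lemma \ref{NONPOS} (via the relation in Lemma \ref{ENERGYFORMS} reducing the general smooth case to the \kahler case), the energy form $\langle\sigma,\sigma\rangle_\omega$ is negative semi-definite on signed measures of finite energy and vanishes only on multiples of $\omega$; hence for two probability measures $\mu_1\ne\mu_2$ the signed measure $\sigma=\mu_1-\mu_2$ has total mass zero and is not a multiple of $\omega$, so $\langle\sigma,\sigma\rangle_\omega<0$ whenever $\sigma$ has finite energy. If $\mu_1,\mu_2$ were both maximizers, then both have finite energy, so $\sigma$ has finite energy, and the parallelogram-type identity
\[
\ecal_h\!\left(\tfrac{\mu_1+\mu_2}{2}\right)=\tfrac12\ecal_h(\mu_1)+\tfrac12\ecal_h(\mu_2)-\tfrac14\langle\mu_1-\mu_2,\mu_1-\mu_2\rangle_{\omega_h}
\]
(which follows by bilinearity after checking that the cross terms are finite) would give $\ecal_h(\frac{\mu_1+\mu_2}{2})>V$, contradicting maximality since $\frac{\mu_1+\mu_2}{2}\in\mcal(K)$. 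Therefore the maximizer is unique.

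The main obstacle is the technical bookkeeping around the diagonal singularity of $G_h$: one must justify that the bilinear identity above makes sense, i.e. that if $\mu_1,\mu_2$ have finite energy then so do all the mixed integrals $\int G_h\,d\mu_i\,d\mu_j$, and that the truncation argument legitimately passes to the limit (monotone convergence handles $\int G_h^{(M)}\to\int G_h$ from above, and finiteness of the diagonal-free energy keeps things from being $-\infty$). I would also note that the potential-theoretic convention—whether one calls $\nu_{K,\omega}$ the "minimizer of $-\ecal_h$" or the "maximizer of $\ecal_h$"—is purely a sign choice and matches the statement in the introduction.
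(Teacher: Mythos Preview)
Your proposal is correct and follows essentially the same approach as the paper: both use the upper bound $G_h\le C_G$ and upper semicontinuity (via truncation) plus compactness of $\mcal(K)$ for existence, and both prove uniqueness by the parallelogram identity combined with the negative semi-definiteness of the energy form on mass-zero signed measures (Lemma \ref{NONPOS}), reducing the general smooth metric case to the K\"ahler case via the identity $\|\mu_1-\mu_2\|_{\omega_h}^2=\|\mu_1-\mu_2\|_\omega^2$ of Lemma \ref{ENERGYFORMS}. The only cosmetic difference is that the paper phrases the parallelogram step as forcing $\|\mu_1-\mu_2\|_\omega^2=0$ while you phrase it as the midpoint having strictly larger energy; these are equivalent.
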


We denote its potential, the {\it weighted equilibrium potential},
by

\begin{equation} \label{EQPOT} U^{\nu_{K,\omega} }_{\omega}(z) =
 \int G_{\omega}(z,w) d\nu_{K, \omega}(w). \end{equation}

\begin{proof} We begin by sketching the proof in the case where
$\omega = \omega_h$ is a \kahler metric.  In this case, the proof
follows the standard lines of \cite{Ran} (Theorem 3.3.2 and
Theorem 3.7.6) or
 \cite{ST}, Theorem 1.3 and particularly Theorem 5.10. Existence follows from the upper
 semi-continuity of $\ecal_{\omega}$, which holds exactly as in
 the local weighted case.

 Uniqueness by the method  of \cite{ST}, Theorem I.1.3 or  Theorem II.5.6
 uses the  non-positivity of the  weighted logarithmic energy norm (Lemma I.1.8 of \cite{ST}) or of
 the Green's energy norm (Theorem II.5.6). This argument applies directly to $\ecal_{\omega}$ when $\omega$ is
 \kahler: one assumes for purposes of contradiction that
there exist two energy maximizers $\mu, \nu$ of mass one. Then it
follows by the argument of Theorem I. 1.3 (b) of \cite{ST} that
$||\mu - \nu||_{\omega}^2 = 0$; so $\mu - \nu = C \omega$. But
integration over $\CP^1$ shows that $C = 0$, proving that $\mu =
\nu$.

We then consider a general smooth Hermitian metric $h$.
Since
$\ecal_h$ is bounded above and $\mcal(K)$ is closed and hence compact,
there exist
measures in $\mcal(K)$ which maximize the energy $\ecal_h$. We now
prove uniqueness:

 \begin{lem}\label{EQMESKapp0}
If $K \subset \CP^1$ is non-polar, and $h$ is any smooth metric,
 then $\ecal_{h}$  has
 a unique maximizer $\nu_{K,h} \in \mcal(K)$.
\end{lem}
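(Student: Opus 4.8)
The plan is to reduce the uniqueness statement for a general smooth Hermitian metric $h$ to the \kahler case already sketched above, using the comparison identity between the two energy forms. Fix a \kahler form $\omega$ on $\CP^1$ with $\int_{\CP^1}\omega=\int_{\CP^1}\omega_h=1$; such an $\omega$ exists since $\ocal(1)$ is positive. By Lemma \ref{ENERGYFORMS}, for any probability measures $\mu,\nu\in\mcal(\CP^1)$ one has
$$\langle\mu,\nu\rangle_{\omega_h}=\langle\mu,\nu\rangle_{\omega}+\int U_{\omega}^{\omega_h}\,d\mu+\int U_{\omega}^{\omega_h}\,d\nu,$$
using $\mu(\CP^1)=\nu(\CP^1)=1$. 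The key point is that the two extra terms are \emph{linear} in $\mu$ (and in $\nu$), so polarizing does not change the quadratic part: for a signed measure $\sigma=\mu_1-\mu_2$ with $\mu_1,\mu_2\in\mcal(K)$ (hence $\sigma(\CP^1)=0$), the linear corrections cancel and $\langle\sigma,\sigma\rangle_{\omega_h}=\langle\sigma,\sigma\rangle_{\omega}$.

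First I would record that $\ecal_h$ is bounded above on $\mcal(K)$ and upper semicontinuous (stated in the text, following from Proposition \ref{prop-grub} together with the standard truncation argument of \cite{ST}), so that, $\mcal(K)$ being weak-$*$ compact, a maximizer $\nu_{K,h}$ exists. Then suppose $\mu$ and $\nu$ are two maximizers in $\mcal(K)$, both of energy equal to the maximal value $m:=\sup_{\mcal(K)}\ecal_h$. The measure $\sigma=\mu-\nu$ is a signed measure supported on $K$ with $\sigma(\CP^1)=0$, and since $\mu,\nu$ are maximizers they have finite energy, so $\sigma$ has finite $\omega$-energy as well. Expanding $\ecal_h\big(\tfrac12(\mu+\nu)\big)$ and using the cancellation above,
$$\ecal_h\Big(\frac{\mu+\nu}{2}\Big)=\frac{\ecal_h(\mu)+\ecal_h(\nu)}{2}-\frac14\langle\sigma,\sigma\rangle_{\omega_h}=m-\frac14\langle\sigma,\sigma\rangle_{\omega}.$$
By Lemma \ref{NONPOS}, $\langle\sigma,\sigma\rangle_{\omega}\le 0$, so the left side is $\ge m$; since $\tfrac12(\mu+\nu)\in\mcal(K)$ and $m$ is the maximum, we get $\langle\sigma,\sigma\rangle_{\omega}=0$. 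The equality case in Lemma \ref{NONPOS} forces $\sigma=C\omega$ for a constant $C$, and integrating over $\CP^1$ gives $C=\sigma(\CP^1)=0$, i.e. $\mu=\nu$. This proves uniqueness of $\nu_{K,h}$.

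The main obstacle I anticipate is not the algebra of the comparison identity but making sure the finite-energy/integrability hypotheses needed to invoke Lemma \ref{NONPOS} (negative semidefiniteness on \emph{signed} measures of finite energy) are genuinely in force: one must know that the maximizers have finite energy and that $U_{\omega}^{\omega_h}$ is $\mu$-integrable (which it is, being continuous by the regularity in Proposition \ref{prop-grub} applied to the smooth relative potential). A secondary subtlety is the case where $\ecal_h$ fails to be strictly concave along some direction not captured by $\sigma$; but since the only null directions of $\langle\cdot,\cdot\rangle_{\omega}$ are multiples of $\omega$, and those are excluded among differences of probability measures, strict concavity on $\mcal(K)$ modulo this trivial direction is exactly what the argument uses. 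No new potential-theoretic input beyond the \kahler case and Lemma \ref{ENERGYFORMS} is required.
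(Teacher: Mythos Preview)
Your proof is correct and follows essentially the same approach as the paper: both derive the identity $\langle\mu-\nu,\mu-\nu\rangle_{\omega_h}=\langle\mu-\nu,\mu-\nu\rangle_{\omega}$ for $\mu,\nu\in\mcal(K)$ (the paper computes it directly as equation \eqref{CURIOUS}, you deduce it from Lemma~\ref{ENERGYFORMS}), then use the parallelogram identity together with Lemma~\ref{NONPOS} to force $\mu-\nu=C\omega$ and hence $\mu=\nu$. The only cosmetic difference is that you write out the midpoint expansion $\ecal_h(\tfrac{\mu+\nu}{2})=m-\tfrac14\langle\sigma,\sigma\rangle_\omega$ explicitly, whereas the paper states the parallelogram identity in the form $\|\tfrac12(\mu+\nu)\|_h^2+\|\tfrac12(\mu-\nu)\|_{\omega_h}^2=\tfrac12(\ecal_h(\mu)+\ecal_h(\nu))$.
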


\begin{proof}
We put
\begin{equation} \label{VMAX}V_{\omega_h}(K) = \max
\{\ecal_{h}(\mu): \mu \in \mcal(K)\}<\infty.  \end{equation}
To prove uniqueness, we observe that,   for any signed measure
$\mu - \nu$ given by a difference of two elements of
$\mcal(\CP^1)$, hence satisfying $\int_{\CP^1} d (\mu - \nu) = 0$,
we have
\begin{equation}\label{CURIOUS} ||\mu - \nu||_{\omega_h}^2 = ||\mu - \nu||_{\omega}^2. \end{equation}    Indeed,
\begin{equation} \begin{array}{l}  ||\mu - \nu||_{\omega_h}^2  =  \int_{\CP^1 \times \CP^1} G_h(z,w) d(\mu - \nu)
\otimes d (\mu - \nu) \\ \\  =  \int_{\CP^1 \times \CP^1}
(G_{\omega}(z,w) + U^{\omega}_h (z) + U^{\omega}_h (w) - \int
U^{\omega}_h \omega) ) d(\mu - \nu)
\otimes d (\mu - \nu) \\ \\
=  \int_{\CP^1 \times \CP^1} G_{\omega}(z,w) d(\mu - \nu) \otimes
d (\mu - \nu) = ||\mu - \nu||_{\omega}^2.  \end{array}
\end{equation}

 Hence, the
energy form is negative semi-definite on the subspace of signed
measures  $\mu - \nu$ where $\mu, \nu$ are positive and of the
same mass.

 Suppose that
$\mu, \nu \in \mcal(K)$ and that both are maximizers of $\ecal_h$
on $\mcal(K)$. Then $\int_{\CP^1} d (\mu - \nu) = 0$ and hence
$||\mu - \nu||_{\omega}^2 \leq 0$. Equality holds if and only if
$\mu - \nu = C \omega$ for some $C$, and the fact that $\int d
(\mu - \nu) = 0$ implies $C = 0$. But
\begin{equation} ||\frac{1}{2}(\mu + \nu))||_h^2  +  ||\frac{1}{2}(\mu - \nu)||_{\omega_h}^2 = \frac{1}{2}(
\ecal_{h}(\mu) + \ecal_h(\nu)) = V_h(K). \end{equation} Since
$\ecal_{h}(\sigma) \leq V_{\omega_h}(K)$ for any $\sigma \in
\mcal(\CP^1)$, it follows that $||\mu - \nu||_{\omega}^2 = 0$ and
hence $\mu = \nu.$ This completes the proof of uniqueness.

\end{proof}

This completes the proof of Proposition \ref{EQMES}. \end{proof}

\begin{defin}  The weighted capacity of $K$ with
respect to $\omega$ is defined by
\begin{equation}
    \label{CAPKOMEGA}
     \mbox{Cap}_{\omega}(K) = e^{ \sup
\{\ecal_{\omega}(\mu): \mu \in \mcal(K) \}} = e^{
\ecal_{\omega}(\nu_{K, \omega})}.
\end{equation}
\end{defin}

\subsection{\label{SHEM}Equilibrium measure and subharmonic envelopes}
We now discuss the second characterization (ii) of equilibrium
measures (see ithe beginning of
\S \ref{WEM})  and prove that it is equivalent to the first.

 Given a closed real $(1,1)$
form $\omega$ (not necessarily a \kahler form), and a compact
subset $K \subset \CP^1$, define the global extremal function
$V_{K, \omega}^*$ as the upper semi-continuous regularization of
\begin{equation} \label{VKOMEGA} V_{K, \omega}(z) : = \sup\{u(z): u \in SH(\CP^1,
\omega, K)\}, \end{equation} where  \begin{equation}
\label{SHOMEGAK} SH(\CP^1, \omega, K) : = \{u \in SH(\CP^1,
\omega): u \leq 0 \;\; \mbox{on}\;\; K\}. \end{equation}
(See \eqref{SHH} for the definition of $SH(\CP^1,\omega)$.)

The important properties of $V_{K, \omega}^*$ and $\nu_{K,
\omega}$ are the following, a special case of Theorem 4.2 of
\cite{GZ}:

\begin{theo}\label{GZ} Let $K \subset \CP^1$ be a Borel set. If $K$ is
non-polar, then $V_{K, \omega}^* \in SH(\CP^1, \omega)$ and
satisfies:
\begin{enumerate}

\item $\nu_{K, \omega} = 0$ on $\CP^1 \backslash \overline{K}$.

\item $V_{K, \omega}^* = 0$ on supp $\nu_{K, \omega}$ and in the
interior of $K$;

\item  $\int_{\overline{K}} \nu_{K, \omega} = \int \omega \; (=
1)$.

\end{enumerate}

\end{theo}

The following Proposition relates  $V_{K, \omega}^*$ to  the
potential (\ref{EQPOT}) of the equilibrium measure of Proposition
\ref{EQMES}.

 \begin{prop}\label{WvsU}  Let $K \subset \CP^1$ be a  non-polar  compact subset and
let $\omega$ be a smooth $(1,1)$ form with $\int_{\CP^1} \omega =
1$. Then,
\begin{equation} \label{EQMEASH} \nu_{K, \omega} = dd^c V_{K,
\omega}^* + \omega. \end{equation} Moreover,
 \begin{equation} \label{UOMEGAKVOMEGAKa} U^{\nu_{K, \omega}}_{\omega} =
V_{K, \omega}^* - \int_{\CP^1} V_{K, \omega}^* \omega.
\end{equation}

\end{prop}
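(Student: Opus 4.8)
The plan is to establish Proposition~\ref{WvsU} by connecting the variational characterization of $\nu_{K,\omega}$ as an energy maximizer (Proposition~\ref{EQMES}) with the pluripotential-theoretic characterization of $V_{K,\omega}^*$ via Theorem~\ref{GZ}. The strategy is to show that the measure $\mu_0 := dd^c V_{K,\omega}^* + \omega$ is a probability measure supported on $K$, to compute its Green's potential in terms of $V_{K,\omega}^*$, and then to use the strict concavity of the energy functional to identify $\mu_0$ with the unique maximizer $\nu_{K,\omega}$.

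\emph{First} I would verify that $\mu_0 \in \mcal(K)$: by Theorem~\ref{GZ}, $V_{K,\omega}^* \in SH(\CP^1,\omega)$, so $\mu_0 \geq 0$; integrating the identity $dd^c V_{K,\omega}^* + \omega$ over $\CP^1$ gives total mass $\int_{\CP^1}\omega = 1$ since $\int_{\CP^1} dd^c V_{K,\omega}^* = 0$ by Stokes; and $\mu_0$ is supported on $\overline K$ because $V_{K,\omega}^*$ is $\omega$-harmonic (i.e. $dd^c V_{K,\omega}^* + \omega = 0$) off $\overline K$ — on $\CP^1\setminus\overline K$ the extremal function is the maximal $\omega$-subharmonic function, hence its Monge–Amp\`ere measure (here just $dd^c V_{K,\omega}^*+\omega$) vanishes there. \emph{Next}, since $dd^c(V_{K,\omega}^* - \int V_{K,\omega}^*\omega) = \mu_0 - \omega$ and $\int_{\CP^1}(V_{K,\omega}^* - \int V_{K,\omega}^*\omega)\,\omega = 0$, the uniqueness clause in the defining property of Green's potentials (\S\ref{GF1}, the characterization following \eqref{SHM}) forces
$$U^{\mu_0}_\omega = V_{K,\omega}^* - \int_{\CP^1} V_{K,\omega}^*\,\omega,$$
which is exactly \eqref{UOMEGAKVOMEGAKa} once we know $\mu_0 = \nu_{K,\omega}$.

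\emph{To identify} $\mu_0$ with $\nu_{K,\omega}$, I would run the standard Frostman-type argument. Using Theorem~\ref{GZ}(2), $V_{K,\omega}^* = 0$ on $\supp\mu_0$ (which equals $\supp\nu_{K,\omega}$ by construction), so $U^{\mu_0}_\omega \equiv -\int V_{K,\omega}^*\omega =: -c$ is constant on $\supp\mu_0$; and $U^{\mu_0}_\omega = V_{K,\omega}^* - c \leq -c$ quasi-everywhere on $K$ since $V_{K,\omega}^* \leq 0$ q.e. on $K$ (the upper-semicontinuous regularization only raises values on a polar set, and $K$ is non-polar so this is harmless after integrating against any $\mu\in\mcal(K)$ of finite energy). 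Then for any $\mu\in\mcal(K)$ with $\ecal_\omega(\mu) > -\infty$, writing $\mu - \mu_0$ as a signed measure of total mass zero and using the negative semi-definiteness of the energy form on such measures (Lemma~\ref{NONPOS} in the \kahler case, and \eqref{CURIOUS} in general),
$$0 \geq \langle \mu - \mu_0, \mu - \mu_0\rangle_\omega = \ecal_\omega(\mu) - 2\int U^{\mu_0}_\omega\,d\mu + \ecal_\omega(\mu_0) \geq \ecal_\omega(\mu) - 2(-c) + \ecal_\omega(\mu_0),$$
where the last inequality uses $\int U^{\mu_0}_\omega\,d\mu \leq -c$; but $\ecal_\omega(\mu_0) = \int U^{\mu_0}_\omega\,d\mu_0 = -c$, so this collapses to $\ecal_\omega(\mu) \leq -c = \ecal_\omega(\mu_0)$. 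Hence $\mu_0$ maximizes the energy on $\mcal(K)$, and by the uniqueness in Proposition~\ref{EQMES}, $\mu_0 = \nu_{K,\omega}$, completing both \eqref{EQMEASH} and \eqref{UOMEGAKVOMEGAKa}.

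\emph{The main obstacle} I anticipate is the careful handling of the "quasi-everywhere" versus "everywhere" distinction in the inequality $V_{K,\omega}^* \leq 0$ on $K$ and, relatedly, justifying that $\int U^{\mu_0}_\omega\,d\mu \leq -c$ for \emph{every} competitor $\mu\in\mcal(K)$ — one must know that the exceptional set where $V_{K,\omega}^* > 0$ is polar (standard for the usc regularization of a family of $\omega$-subharmonic functions) and that polar sets are $\mu$-null for finite-energy $\mu$, the latter being a basic fact of Green's potential theory on $\CP^1$ that I would either cite from \cite{GZ} or deduce from the fact that $\ecal_\omega(\mu) = -\infty$ whenever $\mu$ charges a polar set. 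A secondary technical point is confirming that $\supp\mu_0$ coincides with $\supp\nu_{K,\omega}$ a priori; this is circular as stated, so I would instead phrase the Frostman argument purely in terms of $\mu_0$ and $V_{K,\omega}^*$ (using $V_{K,\omega}^* = 0$ on $\supp\mu_0$ directly from Theorem~\ref{GZ}(2), which refers to $\nu_{K,\omega}$ but whose proof in \cite{GZ} actually establishes it for the Monge–Amp\`ere measure of $V_{K,\omega}^*$), thereby avoiding the circularity.
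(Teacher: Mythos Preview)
Your proposal is correct and follows essentially the same Frostman-type strategy the paper sketches, only run in the reverse direction: the paper starts from the energy maximizer $\nu_{K,\omega}$ of Proposition~\ref{EQMES} and identifies its potential with $V_{K,\omega}^*$ (up to the normalizing constant) via the Frostman inequalities and the principle of domination from \cite{ST}, whereas you start from $\mu_0 := dd^c V_{K,\omega}^* + \omega$ and show it maximizes the energy on $\mcal(K)$, invoking uniqueness. Your replacement of the domination principle by the direct inequality $0 \geq \langle \mu-\mu_0,\mu-\mu_0\rangle_\omega$ (Lemma~\ref{NONPOS} together with \eqref{CURIOUS}) is a clean and legitimate variant, and you have correctly flagged the two genuine subtleties (polar exceptional sets for $V_{K,\omega}^*\leq 0$ on $K$, and the apparent circularity in reading Theorem~\ref{GZ}(2), which is indeed resolved because \cite{GZ} proves that statement for the Monge--Amp\`ere measure of $V_{K,\omega}^*$).
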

In particular, with $F_{K, \omega}=\int_{\CP^1} V^*_{K,\omega} \omega$,
we have that
$U_\omega^{\nu_{K,\omega}}=-F_{K, \omega}$ on the support of
$\nu_{K,\omega}$.

 \begin{proof} We only sketch the proof, which is barely different
 from the case of admissible potential theory on $\C$ \cite{ST}.

  The  second statement (\ref{UOMEGAKVOMEGAKa}) implies the first. It shows that both
  $U^{\nu_{K,\omega} }_{\omega}$ and $V_{K,
 \omega}^*$
 belong to $SH(\CP^1, \omega)$ and  are  potentials for
 $\nu_{K, \omega}$, i.e.
 \begin{equation}\label{DDCFORM}  dd^c U^{\nu_{K,\omega} }_{\omega}  = \nu_{K,
 \omega} - \omega =  dd^c V_{K, \omega}^*. \end{equation}
The potentials must differ by a constant, which is determined by
 integrating with respect to $\omega$ and using that $\int
 U^{\mu}_{\omega} \omega = 0$ for any $\mu$. The proof is
 essentially the same as in the classical unweighted case (see
 Lemma 2.4 of Appendix B.2 of \cite{ST}).

It therefore suffices to prove  (\ref{UOMEGAKVOMEGAKa}).
 The proof in the case of weighted potential theory on $\C$ is given in   Theorem I.4.1 of \cite{ST}, as sharpened in
 Appendix B, Lemma 2.4 of \cite{ST}. The main ingredients are the so-called principle of domination (see \cite{ST}, I.3),
 and the Frostman type theorem that $U^{\nu_{K, \omega}}_{\omega} \geq F_{\omega}$ q.e. on $K$ and
 $U^{\nu_{K, \omega}}_{\omega} \leq F_{K, \omega}$ on
supp $\nu_{K, \omega}$, hence
 $U^{\nu_{K, \omega}}_{\omega} =  F_{K, \omega}$ on
supp $\nu_{K, \omega}$ (see \cite{ST}, Theorem I.1.3 (d)-(f)).
\end{proof}
We may view the energy function as a function on $SH(\CP^1,
\omega)$ rather than on $\mcal(\CP^1)$:
\begin{equation}  \ecal_{\omega}(\psi) : =
 \int_{\CP^1} \psi  (dd^c \psi + \omega). \end{equation}
This definition is slightly more general than the preceding one
since $\int U^{\mu}_{\omega} \omega = 0$ but  $\psi$
 is not assumed to be so normalized.

\begin{rem} We note that $V_{K, \omega}^* \in SH(\CP^1, \omega, K)$ and that
$U^{\nu_{K, \omega}}_{\omega} \in SH(\CP^1, \omega, K)$. In
classical weighted potential theory, both $U^{\nu_{K,
\omega}}_{\omega}$ and $V_{K, \omega}^* $ are defined slightly
differently, and their difference $F_{K,\omega}$
is known as the {\it Robin
constant}.
\end{rem}
\begin{cor}\label{MAXENERGY} We have
\begin{enumerate}
\item $U^{\nu_{K, \omega}}_{\omega}$ maximizes $\ecal_{\omega}$
among all elements of $SH(\CP^1, \omega,K)$.

\item $V^*_{K, \omega}$ maximizes $\ecal_{\omega}$ among all
elements of $SH(\CP^1, \omega, K)$.

\end{enumerate}
\end{cor}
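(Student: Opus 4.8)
The plan is to deduce both assertions from a single structural fact: $V^*_{K,\omega}$ is the pointwise largest element of the convex class $SH(\CP^1,\omega,K)$, and $\ecal_\omega$ is monotone with respect to the pointwise order. By the definition \eqref{VKOMEGA}, every competitor $\psi\in SH(\CP^1,\omega,K)$ satisfies $\psi\le V_{K,\omega}\le V^*_{K,\omega}$, and by the Remark preceding the statement $V^*_{K,\omega}$ itself lies in $SH(\CP^1,\omega,K)$. Granting monotonicity, statement (2) is immediate with maximizer $V^*_{K,\omega}$. For statement (1) I would invoke Proposition \ref{WvsU}, which gives $U^{\nu_{K,\omega}}_\omega=V^*_{K,\omega}-F_{K,\omega}$ with $F_{K,\omega}=\int_{\CP^1}V^*_{K,\omega}\,\omega$: since $U^{\nu_{K,\omega}}_\omega$ is the normalized potential ($\int U^{\nu_{K,\omega}}_\omega\,\omega=0$) carrying the same curvature $\nu_{K,\omega}$, the same comparison identifies it as the maximizer among the normalized representatives, the two extremal functions differing only by the additive constant $F_{K,\omega}$.

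The heart of the argument is the monotonicity of $\ecal_\omega$, which I would establish by a first-variation computation along the segment $\psi_t=(1-t)V^*_{K,\omega}+t\psi$, $t\in[0,1]$, joining $V^*_{K,\omega}$ to an arbitrary $\psi$. Writing $\eta=\psi-V^*_{K,\omega}\le 0$ and $\mu_t=dd^c\psi_t+\omega\in\mcal(\CP^1)$, the first variation reduces (after the integration by parts that is legitimate because both factors are global currents, and up to the normalization fixing the additive constant) to the integral of the increment $\eta$ against the \emph{positive} probability measure $\mu_t$, namely $\int_{\CP^1}\eta\,d\mu_t$; this is the Aubin--Mabuchi monotonicity of the energy. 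Since $\eta\le 0$ and $\mu_t\ge 0$, the derivative is $\le 0$ for every $t$, whence $\ecal_\omega(\psi)\le\ecal_\omega(V^*_{K,\omega})$. Theorem \ref{GZ} and Proposition \ref{WvsU} enter to make the boundary terms transparent: the curvature of $V^*_{K,\omega}$ is $\nu_{K,\omega}$, supported on $K$, and $V^*_{K,\omega}=0$ on $\supp\nu_{K,\omega}$, so the obstacle (contact-set) inequality $\int_{\CP^1}(\psi-V^*_{K,\omega})\,d\nu_{K,\omega}=\int\psi\,d\nu_{K,\omega}\le 0$ holds because $\psi\le 0$ on $K\supseteq\supp\nu_{K,\omega}$.

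Concavity of $\ecal_\omega$ (equivalently, convexity of $-\ecal_h$ from Proposition \ref{CONVEXITY}) gives an equivalent packaging: a concave functional attains its maximum over the convex set $SH(\CP^1,\omega,K)$ exactly where the one-sided directional derivative into the set is non-positive, and the computation above verifies this first-order condition at $V^*_{K,\omega}$. I would use whichever form is cleaner; the monotonicity phrasing has the advantage of exhibiting the maximizer directly, while the concavity phrasing makes uniqueness (up to the constant $F_{K,\omega}$) transparent via the negative semidefiniteness of the energy form on mass-zero signed measures established in Lemma \ref{NONPOS}.

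The main obstacle is precisely the monotonicity step for \emph{all} competitors, in particular those whose curvature measure $dd^c\psi+\omega$ is not supported on $K$. Restricting to $K$-supported competitors would reduce (2) to the energy maximization of Proposition \ref{EQMES} over $\mcal(K)$, but that is a strict subclass: a general $\psi\in SH(\CP^1,\omega,K)$ may place curvature off $K$, and it is exactly the envelope inequality $\psi\le V^*_{K,\omega}$ together with the positivity and unit mass of $\mu_t=dd^c\psi_t+\omega$ that controls these off-$K$ contributions. The delicate points to verify rigorously are the legitimacy of differentiating under the integral along $\psi_t$ and the integrability needed to interpret $\int_{\CP^1}\eta\,d\mu_t$ when $\eta$ carries logarithmic singularities; both are handled by the a priori bounds $G_h\le C_G$ and $\sup_z U^\mu_h\le C_G$ from Proposition \ref{prop-grub} and Corollary \ref{UUB}, together with the finiteness of $\ecal_h(\mu)$ for every $\mu\in\mcal(\CP^1)$.
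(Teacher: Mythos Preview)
The paper states this as a Corollary without proof, so there is no paper argument to compare against; it is presumably meant to be read as an immediate consequence of Proposition~\ref{EQMES} and Proposition~\ref{WvsU}. Your strategy---use the envelope inequality $\psi\le V^*_{K,\omega}$ together with a first-variation/concavity argument---is the natural one, but the key step contains a genuine gap.

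The functional $\ecal_\omega(\psi)=\int_{\CP^1}\psi\,(dd^c\psi+\omega)$ is \emph{not} the Aubin--Mabuchi (Aubin--Yau) energy in dimension one; the latter is $E(\psi)=\tfrac12\int\psi\,(\omega_\psi+\omega)$, whose first variation is indeed $\int\dot\psi\,\omega_\psi$. For $\ecal_\omega$ one computes instead, along $\psi_t=V^*_{K,\omega}+t\eta$ with $\eta=\psi-V^*_{K,\omega}\le 0$,
\[
\frac{d}{dt}\ecal_\omega(\psi_t)\;=\;2\int_{\CP^1}\eta\,d\mu_t\;-\;\int_{\CP^1}\eta\,\omega,
\qquad \mu_t=dd^c\psi_t+\omega,
\]
using one integration by parts. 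The term you record, $\int\eta\,d\mu_t$, is only part of the answer; the extra piece $-\int\eta\,\omega$ is $\ge 0$ whenever $\omega\ge 0$ (since $\eta\le 0$), so the derivative is a sum of a nonpositive and a nonnegative term and the monotonicity you invoke does not follow. In particular, your contact-set inequality $\int\psi\,d\nu_{K,\omega}\le 0$ is correct but does not by itself control the first variation at $t=0$, which equals $2\int\psi\,d\nu_{K,\omega}-\int(\psi-V^*_{K,\omega})\,\omega$.

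There is also a structural issue you should confront head-on rather than fold into the phrase ``up to the normalization fixing the additive constant'': since $\ecal_\omega(\psi+c)=\ecal_\omega(\psi)+c$ and $V^*_{K,\omega}=U^{\nu_{K,\omega}}_\omega+F_{K,\omega}$ with $F_{K,\omega}=\int V^*_{K,\omega}\,\omega$, one has $\ecal_\omega(V^*_{K,\omega})=\ecal_\omega(U^{\nu_{K,\omega}}_\omega)+F_{K,\omega}$. Unless $F_{K,\omega}=0$, at most one of (1) and (2) can literally be the maximizer over the \emph{same} class $SH(\CP^1,\omega,K)$; the two statements should be understood as maximization over different normalizations (general $\psi$ versus $\psi$ with $\int\psi\,\omega=0$), and your write-up should make that explicit rather than derive (1) from (2) by translating by a constant.
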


\subsection{\label{IRREG} Thin points, regular points and capacity}

A set $E$ is said to be {\it thin} at $x_0$ iff either of the
following occur:

\begin{itemize}

\item $x_0$ is not a limit point of $E$;

\item  there exists a disc $D_{\epsilon}(x_0)$ and $\epsilon
> 0$ and a potential $U^{\mu}$ so that $U^{\mu}(x_0) > - \infty$,
so that $U^{\mu}(x) \leq U^{\mu}(x_0) - \eta$ for all $x \in E
\cap D_{\epsilon}(x_0) \backslash \{x_0\}$.

\end{itemize}
We refer to \cite{Lan}, Definition Ch. V \S 3 (5.3.1) or \cite{D}.
A point $x_0$ is called irregular for $E$ if $E$ is thin an $x_0$.
Thus, our assumption on $K$ is that it is non-thin at all of its
points. A subset of a thin set at $x_0$ is also thin at $x_0$ and
the union of two thin sets at $x_0$ is thin there.

We further recall:
\begin{lem} \label{POS} (see \cite{ST}, Corollary 6.11,
    or the Corollary to Theorem 3.7 of \cite{Lan}, Ch. III \S 2)
    If $S \subset \C$ is compact and of positive capacity, then
there exists a positive, finite  measure $\nu$, with support
included in $S$,
so that $U^{\nu} \in C(\CP^1)$. \end{lem}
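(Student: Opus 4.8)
The plan is to reduce the assertion to the continuity of a potential on its own support, and then to invoke the \emph{continuity principle} (the Evans--Vasilesco theorem): if $\nu$ is a finite measure with compact support and the restriction of its Green's potential $U^{\nu}=U^{\nu}_{h}$ to $\supp\nu$ is continuous there, then $U^{\nu}$ is continuous on all of $\CP^1$. (This is a local fact about logarithmic potentials which transfers to Green's potentials, since $G_h$ differs from $2\log|z-w|$ by a smooth function; cf. Proposition \ref{prop-grub}.) Thus it suffices to produce one nonzero finite measure $\nu$ with $\supp\nu\subseteq S$ whose potential is continuous on $\supp\nu$.

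To build such a $\nu$, I would start from the equilibrium measure. Since $S$ has positive capacity it is non-polar, so by Proposition \ref{EQMES} the Green equilibrium measure $\mu:=\nu_{S,\omega_h}$ exists, is supported in $S$, and has finite energy, $\ecal_h(\mu)>-\infty$; as a measure of finite energy assigns zero mass to every polar set (otherwise its restriction to such a set would already have energy $-\infty$), we get $U^{\mu}_h>-\infty$ $\mu$-almost everywhere. Write $U^{\mu}_h$ as the decreasing pointwise limit of the continuous truncated potentials $z\mapsto\int\max\{G_h(z,w),-n\}\,d\mu(w)$; the limit is finite $\mu$-a.e., so by Egorov's theorem together with inner regularity of $\mu$ there is a compact $F\subseteq S$ with $\mu(F)>0$ on which the convergence is uniform, whence $U^{\mu}_h$ is finite and continuous on $F$. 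Put $\nu:=\mu|_F$, so that $\supp\nu\subseteq F\subseteq S$ and $\nu(\CP^1)=\mu(F)>0$. On $F$ one has the decomposition $U^{\mu}_h=U^{\nu}_h+U^{\mu|_{S\setminus F}}_h$ with all three terms finite (all potentials are bounded above by Corollary \ref{UUB}, and finiteness of $U^\mu_h$ on $F$ forces finiteness of the two summands there). Hence for $x_0\in\supp\nu$ and $x_n\to x_0$ in $\supp\nu\subseteq F$ one has $U^{\mu}_h(x_n)\to U^{\mu}_h(x_0)$ by continuity on $F$, while $\limsup_n U^{\mu|_{S\setminus F}}_h(x_n)\le U^{\mu|_{S\setminus F}}_h(x_0)$ by upper semicontinuity of potentials; subtracting yields $\liminf_n U^{\nu}_h(x_n)\ge U^{\nu}_h(x_0)$, and combined with $\limsup_n U^{\nu}_h(x_n)\le U^{\nu}_h(x_0)$ (upper semicontinuity again) this gives continuity of $U^{\nu}_h$ on $\supp\nu$. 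The continuity principle then upgrades this to $U^{\nu}\in C(\CP^1)$.

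The step I expect to be the main obstacle is the continuity principle itself: it is the one ingredient that is not soft, relying on a genuine limiting argument for potentials near their support rather than on formal manipulations; everything else is bookkeeping with semicontinuity, Egorov's theorem, and the basic properties of finite-energy measures recalled in \S\ref{WEM}. I also note a shortcut in the case that $S$ happens to be regular in the sense of \S\ref{IRREG}: then the equilibrium potential $U^{\nu_{S,\omega_h}}_h$ is constant on $\supp\nu_{S,\omega_h}$ (see the discussion after Proposition \ref{WvsU}), hence trivially continuous there, and one may take $\nu=\nu_{S,\omega_h}$ directly. For the general statement I would refer to \cite{ST}, Corollary~6.11, and \cite{Lan}, Ch.~III \S~2, rather than reproducing the details of the continuity principle.
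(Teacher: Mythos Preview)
Your argument is correct: the Egorov construction produces a compact $F\subseteq S$ of positive $\mu$-mass on which $U^{\mu}_h$ is continuous, the decomposition $U^{\mu}_h=U^{\nu}_h+U^{\mu|_{S\setminus F}}_h$ together with upper semicontinuity of both summands forces $U^{\nu}_h$ to be continuous on $\supp\nu$, and the Evans--Vasilesco continuity principle finishes the job.

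As for comparison with the paper: the paper does not actually prove this lemma. It simply cites \cite{ST}, Corollary~6.11 and \cite{Lan}, Ch.~III \S 2, adding only the one-line remark that ``the idea is that $S$ contains a regular subset of positive capacity.'' That remark points to a slightly different packaging of the same circle of ideas: one first shows that any compact set of positive capacity contains a compact \emph{regular} subset $S'$ of positive capacity, and then takes $\nu=\nu_{S',\omega_h}$, whose potential is continuous because regular sets have continuous equilibrium potentials (exactly your ``shortcut'' at the end). Your Egorov argument is in fact the engine behind the existence of such a regular subset, so the two routes are essentially the same; you have simply written out what the references contain, which is more than the paper itself does.
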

 The idea is
that $S$ contains a regular subset of positive capacity.

\section{\label{Rate} Rate function and equilibrium measure}

We continue to  fix a pair $(h, \nu)$ where $h$ is a smooth
Hermitian metric on $\ocal(1)$ and where $\nu$ is a measure
satisfying (\ref{BM}). The purpose of this section is to prove
that the rate function
(\ref{eq-ofer1}) of the LDP of Theorem
\ref{g0} is a {\it good rate function} and also to prove that its
unique minimizer is the equilibrium measure for $(h, K)$. That is,
we prove:
\begin{prop} \label{GOODRATEF} The function $I^{h, K}$ of
(\ref{IGREEN}) has the following properties:

\begin{enumerate}
\item It is a lower-semicontinuous functional. \item It is convex.
\item Its unique minimizer is the equilibrium measure $\nu_{h,
K}$. \item Its minimum value equals $\frac{1}{2}  \log
\mbox{Cap}_{\omega_h} (K).$
\end{enumerate}

\end{prop}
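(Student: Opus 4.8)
The plan is to obtain the soft properties (1)--(2) from results already established and then reduce (3)--(4) to completing the square in the Green's energy form. Write $I^{h,K}=-\frac12\ecal_h+J^{h,K}$ with $J^{h,K}(\mu)=\sup_K U^{\mu}_h$; recall from Lemma \ref{lem-ue} and Proposition \ref{LSCJG} that both summands are finite on all of $\mcal(\CP^1)$, so $I^{h,K}$ takes values in $(-\infty,+\infty]$. For (1): since $G_h$ is upper semicontinuous and bounded above (Proposition \ref{prop-grub}), the functional $\mu\mapsto\ecal_h(\mu)$ is upper semicontinuous for the weak-$*$ topology (write $G_h$ as a decreasing limit of continuous kernels and use monotone convergence), hence $-\frac12\ecal_h$ is lower semicontinuous; $J^{h,K}$ is continuous by Proposition \ref{LSCJG}, and a sum of a lower semicontinuous and a continuous function is lower semicontinuous. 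For (2): $-\ecal_h$ is convex by Proposition \ref{CONVEXITY}, and for each fixed $z\in K$ the map $\mu\mapsto U^{\mu}_h(z)=\int_{\CP^1} G_h(z,\cdot)\,d\mu$ is affine, so $J^{h,K}$ is a supremum (over $z\in K$) of affine functions of $\mu$ and hence convex; adding the two gives convexity of $I^{h,K}$. (Non-negativity of $\tilde I^{h,K}$, making it a rate function, then follows once (4) is proved.)

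For (3)--(4), set $\nu:=\nu_{h,K}$, which by Proposition \ref{EQMES} is the unique maximizer of $\ecal_h$ on $\mcal(K)$, i.e. $\nu=\nu_{K,\omega_h}$. I first evaluate $I^{h,K}(\nu)$. By Proposition \ref{WvsU}, $U^{\nu}_h=V^*_{K,\omega_h}-F_{K,\omega_h}$ on all of $\CP^1$, and under the regularity hypothesis $(\ref{REGULAR})$ one has $V^*_{K,\omega_h}\equiv 0$ on $K$: it vanishes on the interior of $K$ and on $\supp\nu$ by Theorem \ref{GZ}, is $\le 0$ on $K$, and vanishes at every point of $\partial K$ since all of these are regular. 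Hence $\sup_K U^{\nu}_h=-F_{K,\omega_h}$, and since $\supp\nu\subset K$ also $\ecal_h(\nu)=\int U^{\nu}_h\,d\nu=-F_{K,\omega_h}$, so
\[
I^{h,K}(\nu)=-\frac12\ecal_h(\nu)+\sup_K U^{\nu}_h=-\frac12\ecal_h(\nu)+\ecal_h(\nu)=\frac12\ecal_h(\nu)=\frac12\log\mbox{Cap}_{\omega_h}(K),
\]
the last equality being the definition $(\ref{CAPKOMEGA})$.

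Now the lower bound. It suffices to treat $\mu$ with $\ecal_h(\mu)>-\infty$, since otherwise $I^{h,K}(\mu)=+\infty$. For such $\mu$, because $\nu\in\mcal(K)$ and $U^{\nu}_h$ is bounded,
\[
\sup_K U^{\mu}_h\ \ge\ \int_K U^{\mu}_h\,d\nu\ =\ \int_{\CP^1} U^{\nu}_h\,d\mu\ =\ \langle\mu,\nu\rangle_{\omega_h},
\]
so, completing the square in the (bilinear, and finite on finite-energy pairs) energy form,
\[
I^{h,K}(\mu)\ \ge\ -\frac12\langle\mu,\mu\rangle_{\omega_h}+\langle\mu,\nu\rangle_{\omega_h}\ =\ \frac12\ecal_h(\nu)-\frac12\|\mu-\nu\|^2_{\omega_h}.
\]
Since $\mu-\nu$ has total mass $0$, by $(\ref{CURIOUS})$ and Lemma \ref{NONPOS} (applied to a \kahler form cohomologous to $\omega_h$) we get $\|\mu-\nu\|^2_{\omega_h}\le 0$, so $I^{h,K}(\mu)\ge\frac12\ecal_h(\nu)=\frac12\log\mbox{Cap}_{\omega_h}(K)$. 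This proves (4) and that $\nu$ is a minimizer. For uniqueness, if $\mu$ is any minimizer then equality throughout forces $\|\mu-\nu\|^2_{\omega_h}=0$, hence $\mu-\nu=C\omega_h$ by Lemma \ref{NONPOS}, and integrating over $\CP^1$ gives $C=0$, i.e. $\mu=\nu$; this is (3).

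The step I expect to be the main obstacle is the clean evaluation $\sup_K U^{\nu_{h,K}}_h=-F_{K,\omega_h}$: one must verify that the regularity hypothesis $(\ref{REGULAR})$ genuinely removes the (polar) exceptional set on which $V^*_{K,\omega_h}$ could be strictly positive on $K$, and one must track finiteness carefully (boundedness of $U^{\nu_{h,K}}_h$, Cauchy--Schwarz for the energy form, $\nu$-integrability of $U^{\mu}_h$) so that the bilinear manipulations above are legitimate rather than merely formal.
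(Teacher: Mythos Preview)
Your argument is correct. Parts (1)--(2) match the paper's approach essentially verbatim (Lemma \ref{lem-ue}, Lemma \ref{LSCJG}, Proposition \ref{CONVEXITY}, and the observation that $J^{h,K}$ is a sup of affine functionals). The evaluation $I^{h,K}(\nu_{h,K})=\tfrac12\ecal_h(\nu_{h,K})=\tfrac12\log\mbox{Cap}_{\omega_h}(K)$ via Proposition \ref{WvsU} and Theorem \ref{GZ} is also the paper's computation.

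For the lower bound and uniqueness in (3)--(4), however, you take a genuinely different route. The paper (Lemma \ref{GLOBALMAX}) uses the \emph{envelope} characterization: from $U^{\mu}_h-\sup_K U^{\mu}_h\le V^*_{K,h}$ it bounds $-2I^{h,K}(\mu)$ above by $F_{K,\omega_h}$ through a chain of $dd^c$-integrations by parts, applying the barrier inequality twice. You instead use the \emph{averaging} inequality $\sup_K U^{\mu}_h\ge\int U^{\mu}_h\,d\nu_{h,K}=\langle\mu,\nu_{h,K}\rangle_{\omega_h}$ and then complete the square in the bilinear energy form, reducing everything to $\|\mu-\nu_{h,K}\|^2_{\omega_h}\le 0$ via (\ref{CURIOUS}) and Lemma \ref{NONPOS}. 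Your approach is shorter and has the advantage that uniqueness falls out immediately from the equality case of Lemma \ref{NONPOS}, whereas the paper's chain of inequalities does not by itself isolate the equality case so cleanly (the paper leans implicitly on strict convexity). The paper's approach, on the other hand, makes the role of the extremal function $V^*_{K,h}$ as the ``Lagrange multiplier enforcing the constraint'' to $\mcal(K)$ more transparent. Your caveat at the end is well placed: the boundedness of $U^{\nu_{h,K}}_h$ (needed for Fubini and for $\langle\mu,\nu_{h,K}\rangle_{\omega_h}$ to be finite) does hold here, since $V^*_{K,\omega_h}$ is a bounded $\omega_h$-psh function on the compact $\CP^1$, but it is worth stating explicitly.
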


We begin with the following elementary consequence of
Proposition \ref{prop-grub}.
\begin{lem}
    \label{lem-ue}
    For each $z\in \CP^1$, the function
    $\mu\to U_h^\mu(z)$ is an upper  semi-continuous
    function from $\mcal(\CP^1)$ to $\R\cup \{-\infty\}$. Further,
    so is the function $\mu\to\ecal_h(\mu)$.
\end{lem}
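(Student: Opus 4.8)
The plan is to derive both statements from the elementary principle that integration of a \emph{bounded-above upper semi-continuous} function against a probability measure is upper semi-continuous for the weak-$*$ topology. The only input from the rest of the paper is Proposition \ref{prop-grub}: for each fixed $z\in\CP^1$ the function $w\mapsto G_h(z,w)$ is $C^\infty$ on $\CP^1\setminus\{z\}$, satisfies $G_h(z,w)=2\log|z-w|+\rho(z)+O(|z-w|)$ near the diagonal and so tends to $-\infty$ as $w\to z$ (hence, with the consistent convention $G_h(z,z):=-\infty$, it is upper semi-continuous on all of $\CP^1$), and is bounded above by the constant $C_G$. The same applies to $G_h$ as a function on the compact product $\CP^1\times\CP^1$: upper semi-continuous, $\equiv-\infty$ only on the diagonal, and bounded above by $C_G$.

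The key soft lemma I would record (with a one-line proof) is: if $X$ is a compact metric space and $f\colon X\to[-\infty,M]$ is upper semi-continuous and not identically $-\infty$, then $\mu\mapsto\int_X f\,d\mu$ is upper semi-continuous on $\mcal(X)$. Indeed, the sup-convolutions $f_k(x)=\sup_{y\in X}\bigl(f(y)-k\,d(x,y)\bigr)$ are $k$-Lipschitz, hence continuous, finite-valued (using compactness of $X$ and $f\not\equiv-\infty$), bounded above by $M$, and decrease pointwise to $f$ by upper semi-continuity of $f$; then $\int_X f\,d\mu=\inf_k\int_X f_k\,d\mu$ by monotone convergence (applied to $M-f_k\uparrow M-f$), and an infimum of weak-$*$ continuous functions is upper semi-continuous. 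Applying this with $X=\CP^1$ and $f=G_h(z,\cdot)$ shows that $\mu\mapsto U_h^\mu(z)=\int_{\CP^1}G_h(z,w)\,d\mu(w)$ is upper semi-continuous, with values in $[-\infty,C_G]$.

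For $\ecal_h$, I would first note that $\mu\mapsto\mu\otimes\mu$ is continuous from $\mcal(\CP^1)$ to $\mcal(\CP^1\times\CP^1)$: on a product test function $g\otimes h$ one has $\int g\otimes h\,d(\mu\otimes\mu)=\bigl(\int g\,d\mu\bigr)\bigl(\int h\,d\mu\bigr)$, and finite sums of such products are uniformly dense in $C(\CP^1\times\CP^1)$ by Stone--Weierstrass. Since $\ecal_h(\mu)=\int_{\CP^1\times\CP^1}G_h\,d(\mu\otimes\mu)$ is the composition of this continuous map with the upper semi-continuous functional $\sigma\mapsto\int_{\CP^1\times\CP^1}G_h\,d\sigma$ on $\mcal(\CP^1\times\CP^1)$ produced by the lemma above, and a composition of an upper semi-continuous function with a continuous one is upper semi-continuous, the second assertion follows. (Removing the diagonal in the definition of $\ecal_h$ is immaterial here: $G_h\le C_G$ makes the integral well defined in $[-\infty,C_G]$, and $\mu\otimes\mu$ charges the diagonal only when $\mu$ has atoms, in which case $\ecal_h(\mu)=-\infty$ anyway.)

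I do not expect a genuine obstacle: the content is entirely in Proposition \ref{prop-grub} (the logarithmic singularity plus the global upper bound $C_G$) together with weak-$*$ compactness. The only points needing a little care are the verification that the sup-convolutions $f_k$ really decrease to $f$ on a compact metric space (this is exactly where both upper semi-continuity and boundedness above of $f$ are used) and the uniform bookkeeping of the value $-\infty$ so that the monotone convergence step and all limits are legitimate.
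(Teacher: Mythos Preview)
Your argument is correct and follows essentially the same idea as the paper: write the integrand as a decreasing limit of continuous functions and take the infimum. The paper's version is slightly more direct---it uses the truncation $G_h^M:=G_h\vee(-M)$, which is continuous on $\CP^1\times\CP^1$ by Proposition~\ref{prop-grub}, in place of your sup-convolutions, and handles $\ecal_h$ by the same truncation rather than via the map $\mu\mapsto\mu\otimes\mu$ and Stone--Weierstrass.
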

\begin{proof}
    Fix $M\in \R$ and define $G_h^M(z,w)=G_h(z,w)\vee (-M)$.
    By Proposition \ref{prop-grub}, $G_h^M$ is continuous on
    $\CP^1\times \CP^1$. Set
    \begin{equation}
        \label{eq-180209a}
        U^{\mu,M}_h(z)=\int_{\CP^1}G_h^M(z,w)d\mu(w)\,,\quad
    \ecal_h^M(\mu)=\int_{\CP^1\times\CP^1} G_h^M(z,w)d\mu(z)d\mu(w)\,.
\end{equation}
    For fixed $z$, it follows that $\mu\to U^{\mu,M}_h(z)$ and
    $\mu\to \ecal_h^M(\mu)$ are continuous on $\mcal(\CP^1)$.
    Since $U^{\mu}_h(z)=\inf_M U^{\mu,M}_h(z)$ and
    $\ecal_h(\mu)=\inf_M \ecal_h^M(\mu)$, the claimed
    upper semi-continuity follows.
\end{proof}

We next have the following.
\begin{lem} \label{LSCJG}
\begin{enumerate}
    \item The function $J^{h, K}(\mu) =  \sup_{z\in K} U^{\mu}_h(z)$
        is upper
semi-continuous. \item Assume that all points of $K$ are regular.
Then $J^{h, K}(\mu) $ is also lower semi-continuous.

\end{enumerate}
\end{lem}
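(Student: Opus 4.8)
The plan is to realize $J^{h,K}$ as a monotone limit of weakly continuous functions on the compact metrizable space $\mcal(\CP^1)$: as an \emph{infimum} of continuous functions for part (1), and as a \emph{supremum} of continuous functions for part (2).

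For (1) I would use the truncations $G_h^M=G_h\vee(-M)$, which are continuous on $\CP^1\times\CP^1$ (Proposition \ref{prop-grub}; already noted in the proof of Lemma \ref{lem-ue}), together with the potentials $U^{\mu,M}_h(z)=\int_{\CP^1}G_h^M(z,w)\,d\mu(w)$. A routine estimate, splitting $|U^{\mu_n,M}_h(z_n)-U^{\mu,M}_h(z)|$ and using uniform continuity of $G_h^M$ on the compact product together with weak convergence of the $\mu_n$, shows $(\mu,z)\mapsto U^{\mu,M}_h(z)$ is jointly continuous; hence $J^M(\mu):=\sup_{z\in K}U^{\mu,M}_h(z)$ is continuous on $\mcal(\CP^1)$ since $K$ is compact. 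As $G_h^M\downarrow G_h$, the functions $U^{\mu,M}_h(z)$ decrease to $U^{\mu}_h(z)$ and $J^M$ decreases, so $J^{h,K}\le\inf_M J^M$ is trivial. For the reverse inequality, pick $z_M\in K$ with $U^{\mu,M}_h(z_M)=J^M(\mu)$, pass to a subsequence with $z_M\to z_*\in K$, and use $J^M(\mu)\le U^{\mu,M_0}_h(z_M)$ for $M\ge M_0$; letting $M\to\infty$ and invoking continuity of $U^{\mu,M_0}_h$ gives $\inf_M J^M(\mu)\le U^{\mu,M_0}_h(z_*)$ for every $M_0$, hence $\le U^{\mu}_h(z_*)\le J^{h,K}(\mu)$. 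Thus $J^{h,K}=\inf_M J^M$ is upper semicontinuous.

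For (2) the key is the identity
\[
J^{h,K}(\mu)=\sup_{\rho\in\mcal_c(K)}\langle\mu,\rho\rangle_{\omega_h},
\]
where $\langle\mu,\rho\rangle_{\omega_h}=\int U^{\mu}_h\,d\rho=\int U^{\rho}_h\,d\mu$ is the energy pairing of (\ref{ENERGYNORM}) and $\mcal_c(K)$ denotes the probability measures on $K$ with $U^{\rho}_h\in C(\CP^1)$. Granting this, $J^{h,K}$ is a supremum of weakly continuous functions $\mu\mapsto\langle\mu,\rho\rangle_{\omega_h}$ (continuity because $U^{\rho}_h\in C(\CP^1)$), hence lower semicontinuous. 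The inequality $\sup_{\rho\in\mcal_c(K)}\langle\mu,\rho\rangle_{\omega_h}\le J^{h,K}(\mu)$ is immediate from $\langle\mu,\rho\rangle_{\omega_h}=\int_K U^{\mu}_h\,d\rho\le\sup_K U^{\mu}_h$. Before the reverse inequality I would record that $m:=J^{h,K}(\mu)$ is finite: $m\le C_G$ by Corollary \ref{UUB}, while for any $\sigma\in\mcal_c(K)$ (which exists, $K$ being non-polar, by Lemma \ref{POS}) we have $m\ge\langle\mu,\sigma\rangle_{\omega_h}=\int U^{\sigma}_h\,d\mu\ge\min_{\CP^1}U^{\sigma}_h>-\infty$; moreover $m$ is attained, $m=U^{\mu}_h(z_0)$ for some $z_0\in K$, since $U^{\mu}_h$ is upper semicontinuous (Lemma \ref{lem-ue}) on the compact set $K$.

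The heart of (2) — and the step I expect to be the main obstacle, being the only place regularity (\ref{REGULAR}) is genuinely used — is to produce, for each $\delta>0$, a measure in $\mcal_c(K)$ whose energy pairing against $\mu$ is at least $m-\delta$. Work in an affine chart containing $z_0$, where $u:=U^{\mu}_h+\phi$ is, up to an additive constant, twice the logarithmic potential of $\mu$, hence subharmonic, with $u(z_0)>-\infty$. Fix $\delta>0$ and choose $\epsilon>0$ so small that $|\phi-\phi(z_0)|<\delta/2$ on $D(z_0,\epsilon)$. Then $\{z:u(z)\le u(z_0)-\delta/2\}$ is thin at $z_0$ directly by the definition of thinness (with $u$ as the potential and gap $\delta/2$), and it contains $\{z\in K\cap D(z_0,\epsilon):U^{\mu}_h(z)<m-\delta\}$. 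Since $z_0$ is regular, $K$, and hence $K\cap D(z_0,\epsilon)$, is non-thin at $z_0$; as a union of two sets thin at $z_0$ is thin at $z_0$, the complementary piece $K_\delta:=\{z\in K\cap\overline{D(z_0,\epsilon)}:U^{\mu}_h(z)\ge m-\delta\}$ is non-thin at $z_0$, hence non-polar. It is compact by upper semicontinuity of $U^{\mu}_h$ and contains $z_0$. By Lemma \ref{POS} there is a probability measure $\tau_\delta\in\mcal(K_\delta)\subset\mcal_c(K)$, and $\langle\mu,\tau_\delta\rangle_{\omega_h}=\int U^{\mu}_h\,d\tau_\delta\ge m-\delta$. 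Letting $\delta\downarrow0$ establishes the identity, and with it the lower semicontinuity of $J^{h,K}$.
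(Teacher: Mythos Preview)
Your proof is correct and follows essentially the same route as the paper. For (1) both you and the paper use the truncations $G_h^M$ to realize $J^{h,K}$ as a decreasing limit of the continuous functions $\sup_K U_h^{\cdot,M}$; for (2) both arguments locate the maximizer $z_0$, use thinness plus the regularity of $K$ at $z_0$ to show the near-maximal set $\{z\in K:U^\mu_h(z)\ge m-\delta\}$ has positive capacity, and then invoke Lemma~\ref{POS} and the Fubini swap $\int U^\mu_h\,d\rho=\int U^\rho_h\,d\mu$---the only difference is that you package this as the duality formula $J^{h,K}(\mu)=\sup_{\rho\in\mcal_c(K)}\langle\mu,\rho\rangle_{\omega_h}$ (which is a pleasant byproduct, yielding convexity of $J^{h,K}$ for free), whereas the paper runs the same estimates directly along a weakly convergent sequence $\mu_n\to\mu$.
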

\begin{proof}

\noindent{\bf (i) Upper semi-continuity}

\medskip

    We begin by proving the upper semi-continuity.  Let
    $\mu_n\to \mu^*$ weakly in $\mcal(\CP^1)$. Fix $M\in \R$  and recall
    that $G_h^M(\cdot,\cdot)$ is continuous on
    $\CP^1\times \CP^1$. Therefore, the map
    $(z,\mu)\to U_h^{\mu,M}(z)$ is continuous. Because
    $K$ is compact, $\mu\mapsto \sup_{z\in K} U_h^{\mu,M}(z)$ is
    therefore continuous. Thus,
    $$ J^{h,K}(\mu_n)=\sup_{z\in K} U_h^{\mu_n}(z)
\leq
\sup_{z\in K} U_h^{\mu_n,M}(z)\to_{n\to\infty}
\sup_{z\in K}U_h^{\mu^*,M}(z)\,.
$$
Since $U_h^{\mu^*,M}(z)\to_{M\to\infty} U_h^{\mu^*}(z)$
for any $z$ by monotone convergence, we have
$$\sup_{z\in K}U_h^{\mu^*,M}(z)\to_{M\to\infty}
\sup_{z\in K}U_h^{\mu^*}(z)=J^{h,K}(\mu)\,.$$ Combining the last
two displays completes the proof of (1).

\medskip

\noindent{\bf (ii) Lower semi-continuity}

\medskip

 Let  $K$ be a set all of
whose points are regular. Suppose that $\mu_n \to \mu$. We claim
that
$$ \liminf_{n \to \infty} \sup_K U^{\mu_n}_{h}  \geq
\sup_{z\in K} U^{\mu}(z):=a.
$$
(Recall that $a<\infty$.)
For any $\epsilon > 0$ set
$$A_{\epsilon} = \{z \in \CP^1: U_h^{\mu} \geq a - \epsilon\}.  $$
We claim the following
\begin{equation} \label{CLAIM}\;\;
\forall \epsilon > 0,\;\;\mbox{Cap}(A_{\epsilon} \cap K)
> 0. \end{equation}

To prove the claim, let $z^*$ be a point where $U^{\mu}$ attains
its maximum on $K$ (such a point
exists by the upper semicontinuity of $z\mapsto U^\mu(z)$ and the
compactness of $K$).
By assumption, $z^*$ is a regular point. Since
$$U_h^{\mu} <  a - \epsilon ,  \;\; \mbox{on} \; K \backslash
A_{\epsilon}, $$ $K \backslash A_{\epsilon}$ is thin at $z^*$
 (see \cite{Lan} (5.3.2), page 307). Suppose that there exists
$\epsilon_0 > 0$ so that $\mbox{Cap}(A_{\epsilon_0} \cap K) = 0$.
A set of capacity zero is thin at each of its points, see  \cite{Ch},
Corollary
on page 92.
Since
$$K = K \backslash
A_{\epsilon_0} \cup (K \cap A_{\epsilon_0}), $$ and since the union
of two sets thin at $z^*$ is thin at $z^*$,  we see that $K$ is
thin at $z^*$. This contradicts the regularity of $K$ at $z^*$ and
proves the claim.

We now complete the proof of the Proposition. Let ${\bf
1}_{A_{\epsilon} \cap K}$ denote the characteristic function of
$A_{\epsilon} \cap K$. Since $U^{\mu}$ is upper semi-continuous,
$A_{\epsilon} \cap K$ is compact. By (\ref{CLAIM}), it has
positive capacity. It follows by Lemma \ref{POS} there exists a
positive measure $\nu_{\epsilon, \mu, K}$ supported on
$A_{\epsilon} \cap K$  whose potential $U^{\nu_{\epsilon, \mu,
K}}$ is continuous.

We have,
\begin{equation} \begin{array}{lll}  \label{LIMINFORMa}
    \lim_{n \to \infty}
\int_{A_{\epsilon}} U_h^{\mu_n}(z) d \nu_{\epsilon, \mu, K }(z) &
=&  \lim_{n \to \infty} \int U_h^{\nu_{\epsilon, \mu, K}}(z)
d\mu_n(z)\\ && \\&=& \int U_h^{\nu_{\epsilon, \mu, K}}
d\mu(z)\nonumber\\&&\\ &=& \int_{A_{\epsilon}} U_h^{\mu}(z) d
\nu_{\epsilon, \mu, K}(z) \nonumber.
\end{array} \end{equation}
Therefore,
\begin{eqnarray*}
\nu_{\epsilon, \mu, K} (A_{\epsilon} \cap K) \liminf_{n \to
\infty} \sup_K U^{\mu_n}_{h} & \geq &   \liminf_{n \to \infty}
\int_{A_{\epsilon}} U^{\mu_n}_{h}(z) d \nu_{\epsilon, \mu, K}(z)\\
&=& \int_{A_{\epsilon}} U^{\mu}_{h}(z) d\nu_{\epsilon, \mu,
K}(z)
 \geq  (a - \epsilon) \nu_{\epsilon, \mu, K} (A_{\epsilon} \cap K).
 \end{eqnarray*}
Since $\nu_{\epsilon, \mu, K} (A_{\epsilon} \cap K) > 0$ and since
$\epsilon$ is arbitrary, this finishes the proof.
\end{proof}

\begin{rem}
We note that if $d\nu$ is any measure on $K$ whose potential
$U^{\nu}$ is continuous,  then $U^{{\bf 1}_{A_{\epsilon} \cap K}
\nu}$ is automatically  continuous. Indeed, $U^{{\bf
1}_{A_{\epsilon} \cap K} \nu}$ is upper semi-continuous, so we
only need to prove that it is lower semi-continuous. But
$$U^{{\bf 1}_{A_{\epsilon} \cap K} \nu}  = U^{\nu} - U^{\nu - {\bf 1}_{A_{\epsilon} \cap K} \nu}, $$
and the first term on the right is continuous and the second,
being the opposite of a potential, is lower semi-continuous.

\end{rem}

A consequence of Lemma \ref{LSCJG} is that  $J^{h,K}(\cdot)$ is
bounded  on $\mcal(\CP^1)$,   and thus $I^{h,K}(\cdot)$ is
well-defined. Further, we have the following.
\begin{lem}
    \label{prop-080209a}
    The function $\tilde I^{h,K}(\cdot)$ on $\mcal(\CP^1)$
    is a rate function.
\end{lem}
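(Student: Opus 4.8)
~\textbf{Proof plan.} The statement to prove is Lemma~\ref{prop-080209a}: that $\tilde I^{h,K}=I^{h,K}-E_0(h)$ is a rate function, i.e.\ that it is lower semicontinuous and non-negative on $\mcal(\CP^1)$. Since subtracting the constant $E_0(h)=\inf_\mu I^{h,K}(\mu)$ makes non-negativity automatic \emph{once we know the infimum is finite}, the plan has two ingredients: (a) show $I^{h,K}$ is lower semicontinuous, and (b) show $\inf_{\mu\in\mcal(\CP^1)}I^{h,K}(\mu)>-\infty$ so that $E_0(h)$ is a genuine real constant and $\tilde I^{h,K}\geq 0$.

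For (a), recall $I^{h,K}(\mu)=-\tfrac12\ecal_h(\mu)+J^{h,K}(\mu)$ with $J^{h,K}(\mu)=\sup_{z\in K}U^\mu_h(z)$. By Lemma~\ref{lem-ue}, $\mu\mapsto\ecal_h(\mu)$ is upper semicontinuous, hence $\mu\mapsto-\tfrac12\ecal_h(\mu)$ is lower semicontinuous. By Lemma~\ref{LSCJG}(ii), using the regularity assumption~(\ref{REGULAR}) on $K$, $\mu\mapsto J^{h,K}(\mu)$ is lower semicontinuous (in fact continuous, being also upper semicontinuous by part (i)). A sum of two lower semicontinuous functions is lower semicontinuous, so $I^{h,K}$ is lower semicontinuous; subtracting a constant preserves this, so $\tilde I^{h,K}$ is lower semicontinuous.

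For (b), I would argue that $I^{h,K}$ is bounded below on $\mcal(\CP^1)$. The term $J^{h,K}(\mu)=\sup_K U^\mu_h$ is bounded — indeed bounded above by $C_G$ via Corollary~\ref{UUB}, and (using Lemma~\ref{LSCJG} together with the regularity of $K$ and Lemma~\ref{POS}) it is also bounded below uniformly in $\mu$, since picking a fixed measure $\rho$ on $K$ with continuous potential $U^\rho_h$ gives $\sup_K U^\mu_h\geq \int U^\mu_h\,d\rho=\int U^\rho_h\,d\mu\geq \inf_{\CP^1}U^\rho_h>-\infty$. For the energy term, by Proposition~\ref{prop-grub} we have $G_h\leq C_G$ on $\CP^1\times\CP^1$, so $\ecal_h(\mu)=\int G_h\,d\mu\,d\mu\leq C_G$, whence $-\tfrac12\ecal_h(\mu)\geq -\tfrac12 C_G$. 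Combining the two lower bounds shows $I^{h,K}(\mu)\geq -\tfrac12 C_G + \inf_{\CP^1}U^\rho_h$ uniformly in $\mu$, so $E_0(h)>-\infty$ and $\tilde I^{h,K}=I^{h,K}-E_0(h)\geq 0$.

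The main obstacle is really the lower semicontinuity of $J^{h,K}$, but that work has already been done in Lemma~\ref{LSCJG}(ii) and is where the hypothesis~(\ref{REGULAR}) is genuinely used; given that lemma, the present statement is a short assembly. The only subtlety worth checking is that $I^{h,K}$ is not identically $+\infty$ (so that $E_0(h)$ is finite rather than a vacuous infimum over the empty effective domain), which follows because for a nice $\mu$ — e.g.\ $\mu=\omega_h$, or the equilibrium measure $\nu_{h,K}$, which has finite Green energy since $K$ is non-polar — both $\ecal_h(\mu)$ and $\sup_K U^\mu_h$ are finite, as recorded in Lemma~\ref{lem-ue} and Proposition~\ref{LSCJG}.
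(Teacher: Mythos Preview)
Your proof is correct and follows essentially the same approach as the paper: both invoke Lemma~\ref{lem-ue} and Lemma~\ref{LSCJG} to obtain lower semicontinuity and boundedness below of $I^{h,K}$, whence $\tilde I^{h,K}$ is a rate function. The only cosmetic difference is that the paper obtains the lower bound on $J^{h,K}$ directly from its continuity on the compact space $\mcal(\CP^1)$, whereas you supply an explicit bound via the Fubini trick with a measure $\rho$ of continuous potential; both are fine.
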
\begin{proof} By Lemma \ref{lem-ue} and
   Lemma \ref{LSCJG}, the function
    $$ -\frac12 \ecal_h(\cdot)+J^{h,K}(\cdot)$$
is well defined  on $\mcal(\CP^1)$, bounded below, and
lower semi-continuous.
This implies the claim.
\end{proof}

Next we prove convexity of the
rate function. Convexity of the (unweighted) logarithmic energy is well-known
  \cite{ST,BG,BB}.

\begin{lem} \label{CONVEX} $I^{h, K}$ is a strictly  convex  function on $\mcal$ (with possible values $+ \infty$) \end{lem}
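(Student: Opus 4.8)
The plan is to decompose $I^{h,K}=-\frac12\ecal_h+J^{h,K}$ into its two pieces and argue convexity of each separately, with strict convexity coming from the energy term. For the energy term, Proposition \ref{CONVEXITY} already gives that $-\ecal_h$ is convex on $\mcal(\CP^1)$, and the proof of that Proposition via Lemma \ref{NONPOS} and Lemma \ref{ENERGYFORMS} in fact shows more: writing $\mu_t=(1-t)\mu_0+t\mu_1$, a direct expansion using bilinearity of $\langle\cdot,\cdot\rangle_{\omega_h}$ gives
\begin{equation}
-\ecal_h(\mu_t)=(1-t)(-\ecal_h(\mu_0))+t(-\ecal_h(\mu_1))+t(1-t)\|\mu_0-\mu_1\|_{\omega_h}^2,
\end{equation}
and since $\mu_0-\mu_1$ has total mass zero, Lemma \ref{NONPOS} (via the identity \eqref{CURIOUS}, which reduces the $\omega_h$-norm to the $\omega$-norm for a \kahler $\omega$ in the same class) shows $\|\mu_0-\mu_1\|_{\omega_h}^2=\|\mu_0-\mu_1\|_{\omega}^2\le 0$, with equality only if $\mu_0-\mu_1=C\omega$, i.e. only if $\mu_0=\mu_1$. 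Hence $-\frac12\ecal_h$ is convex, and strictly so whenever both endpoints have finite energy (when one has energy $-\infty$, convexity of the affine-plus-nonpositive combination is immediate and the inequality is automatically strict in the relevant sense).

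For the second term, I would show $J^{h,K}(\mu)=\sup_{z\in K}U_h^\mu(z)$ is convex: for each fixed $z$, $\mu\mapsto U_h^\mu(z)=\int G_h(z,w)\,d\mu(w)$ is \emph{linear} in $\mu$, hence convex; and a pointwise supremum of convex functions is convex. So $J^{h,K}$ is convex on $\mcal(\CP^1)$ (it is finite-valued by Lemma \ref{LSCJG} and Corollary \ref{UUB}). Adding a convex function to a strictly convex function yields a strictly convex function, which gives the claim.

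The one point needing care — and the main obstacle — is the bookkeeping around the value $+\infty$: $I^{h,K}$ may equal $+\infty$ (when $\ecal_h(\mu)=-\infty$), so "strictly convex" must be interpreted in the extended-real-valued sense, namely $I^{h,K}(\mu_t)<(1-t)I^{h,K}(\mu_0)+tI^{h,K}(\mu_1)$ for $t\in(0,1)$ whenever $\mu_0\neq\mu_1$ and the right-hand side is finite. I would handle this by first reducing to the case where both $\ecal_h(\mu_0),\ecal_h(\mu_1)$ are finite (otherwise the right side is $+\infty$ and there is nothing to prove), noting that finiteness of the energy at both endpoints forces $\mu_0-\mu_1\in\ecal^+(\CP^1)$ so the quadratic form identity above applies verbatim. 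The remaining steps — linearity of $U_h^\mu(z)$ in $\mu$ and convexity of a sup of convex functions — are routine. I should also double-check that the strict inequality survives adding the convex (possibly merely weakly convex) term $J^{h,K}$, which it does since strict convexity of one summand is preserved under addition of any convex summand.
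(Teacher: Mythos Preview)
Your proposal is correct and follows essentially the same approach as the paper: decompose $I^{h,K}=-\tfrac12\ecal_h+J^{h,K}$, invoke Proposition \ref{CONVEXITY} for the energy term, and observe that $J^{h,K}$ is a supremum of affine functionals of $\mu$, hence convex. Your treatment is in fact more careful than the paper's, which states ``strictly convex'' but only writes out the convexity argument; your bilinear expansion together with Lemma \ref{NONPOS} and \eqref{CURIOUS} supplies the missing strictness, and your handling of the $+\infty$ case is appropriate.
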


  \begin{proof} Convexity of the energy functional is proved in
  Proposition \ref{CONVEXITY}.
To complete the proof, we note that the  `potential term' $J^{h,
K} (\mu)  = \sup_K  U^{\mu}_{h}(z) $ is a maximum of affine
functions of $\mu$, hence is convex.

\end{proof}

\subsection{\label{GLOBALGENUS0} The global minimizer of $I^{h, K}$}

Since $I^{h, K}$ is lower semi-continuous it has a minimum on
$\mcal(\CP^1)$ and also on each closed  ball $B(\sigma, \delta)
\subset \mcal(\CP^1)$. In this section we show that the global
minimum is the equilibrium measure $\nu_{K,h}$ for the data
$(h, \nu)$
 defining $I^{h, K}$.

The equilibrium measure $\nu_{K, h}$ is the unique maximizer
of $\ecal_{h}(\mu) $ on $\mcal(K)$. Our function differs
from this constrained function in not being constrained to
$\mcal(K)$ but rather   possessing the term $ \sup_K
U^{\mu}_{h}$. We need to show that this term  behaves like a
`Lagrange multiplier' enforcing the constraint. Unfortunately, it
is not `smooth' as a function of $\mu$, so we
cannot use
calculus alone to demonstrate this.

%
%
%

\begin{lem}\label{GLOBALMAX} The global minimizer of $I^{h, K}$ is $\nu_{K, h}$. The global minimum
is $ \frac12  \log \mbox{Cap}_{\omega_h} (K)$.  \end{lem}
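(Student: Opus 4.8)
The plan is to show the two sides of the identity by playing the energy characterization of $\nu_{K,h}$ against the potential-theoretic characterization from Proposition~\ref{WvsU}. Write $\nu_* = \nu_{K,h}$ for the equilibrium measure, let $F = F_{K,\omega_h} = \int_{\CP^1} V^*_{K,\omega_h}\,\omega_h$, and recall from Proposition~\ref{WvsU} and the remark following it that $U^{\nu_*}_h = V^*_{K,\omega_h} - F$, that $U^{\nu_*}_h = -F$ on $\supp \nu_*$, and (by Theorem~\ref{GZ}(2), since $V^*_{K,\omega_h}\le 0$ on $K$) that $U^{\nu_*}_h \le -F$ everywhere on $K$, hence $\sup_K U^{\nu_*}_h = -F$. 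Also $\ecal_h(\nu_*) = \int U^{\nu_*}_h\,d\nu_* = -F$ since $U^{\nu_*}_h=-F$ on $\supp\nu_*$. Therefore
$$
I^{h,K}(\nu_*) = -\tfrac12\ecal_h(\nu_*) + \sup_K U^{\nu_*}_h = \tfrac12 F - F = -\tfrac12 F.
$$
On the other hand, by the definition of the Green's capacity, $\ecal_h(\nu_*) = \log\mathrm{Cap}_{\omega_h}(K)$, so $-F = \log\mathrm{Cap}_{\omega_h}(K)$ and $I^{h,K}(\nu_*) = \tfrac12\log\mathrm{Cap}_{\omega_h}(K)$. This identifies the candidate value; it remains to prove that no $\mu\in\mcal(\CP^1)$ does better.

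For the lower bound $I^{h,K}(\mu)\ge -\tfrac12 F$ for all $\mu$, I would argue as follows. Fix $\mu\in\mcal(\CP^1)$ with $I^{h,K}(\mu)<\infty$ and set $a = \sup_K U^\mu_h = J^{h,K}(\mu)$. The key inequality is a ``domination'' estimate: I claim
$$
\ecal_h(\mu) \;=\; \int_{\CP^1} U^\mu_h\,d\mu \;\le\; \int_{\CP^1} U^{\nu_*}_h\,d\mu \;+\; (a+F),
$$
or more precisely I want to compare $\mu$ with $\nu_*$ through the bilinear energy form. Write $\langle\cdot,\cdot\rangle$ for $\langle\cdot,\cdot\rangle_{\omega_h}$. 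Expanding $\langle\mu-\nu_*,\mu-\nu_*\rangle\le 0$ (negative semidefiniteness on zero-mass signed measures, Proposition~\ref{CONVEXITY}/Lemma~\ref{NONPOS} together with \eqref{CURIOUS}) gives
$$
\ecal_h(\mu) \le 2\langle\mu,\nu_*\rangle - \ecal_h(\nu_*) = 2\int U^{\nu_*}_h\,d\mu + F.
$$
Now $U^{\nu_*}_h = V^*_{K,\omega_h} - F \le -F$ on $K$, but off $K$ it need not be $\le -F$; however, the relevant bound is that $U^{\nu_*}_h(z) \le \sup_K U^\mu_h$ is \emph{not} what we want — rather, I should use that $U^{\nu_*}_h$ is the potential of the extremal function and compare directly. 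The cleanest route: by the ``principle of domination'' for $\omega_h$-subharmonic functions (a compact-surface analogue of \cite{ST}, Theorem~II.3.2, which also underlies the proof of Proposition~\ref{WvsU}), since $U^{\nu_*}_h + F = V^*_{K,\omega_h}$ is $\le 0$ on $K \supseteq \supp\nu_*$ while $U^\mu_h \le a$ on $K$, one gets $U^{\nu_*}_h(z) + F \le U^\mu_h(z) - a + \text{(something} \le 0)$ — i.e. $U^{\nu_*}_h - U^\mu_h \le -(a+F)$ on all of $\CP^1$, because the left side is $\omega_h$-subharmonic minus $\omega_h$-subharmonic with matching $dd^c$... this needs care. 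The honest statement is: $U^{\nu_*}_h - a \le U^\mu_h$ would follow from domination applied to the difference, provided $U^{\nu_*}_h - a \le U^\mu_h$ holds $\nu_*$-a.e.; since $U^{\nu_*}_h = -F$ on $\supp\nu_*$ and $U^\mu_h \le a$ is the wrong direction, instead I apply domination the other way. I expect this is exactly the step handled in \S\ref{SLATERG0} via the Frostman-type inequalities already quoted in the proof of Proposition~\ref{WvsU}.

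Granting the domination inequality $U^{\nu_*}_h(z) \le U^\mu_h(z) + (a + F)$ for all $z\in\CP^1$ (equivalently $V^*_{K,\omega_h}(z) \le U^\mu_h(z) + a$, using $U^{\nu_*}_h = V^*_{K,\omega_h}-F$), integrate against $\nu_*$:
$$
\ecal_h(\nu_*) \;=\; \int U^{\nu_*}_h\,d\nu_* \;\le\; \int U^\mu_h\,d\nu_* + (a+F) \;=\; \langle\mu,\nu_*\rangle + a + F.
$$
Combining with $\ecal_h(\mu) \le 2\langle\mu,\nu_*\rangle - \ecal_h(\nu_*)$ and $\ecal_h(\nu_*) = -F$:
$$
\ecal_h(\mu) \le 2\langle\mu,\nu_*\rangle + F \le 2\big(\ecal_h(\nu_*) - a - F\big) + F = -2a - F,
$$
so $-\tfrac12\ecal_h(\mu) + a \ge \tfrac12(2a+F) + a - a = \tfrac12 F + a \ge$ ... let me recompute: $-\tfrac12\ecal_h(\mu) \ge a + \tfrac12 F$, hence $I^{h,K}(\mu) = -\tfrac12\ecal_h(\mu) + a \ge 2a + \tfrac12 F \ge \tfrac12 F$ provided $a \ge 0$ — and indeed $a = \sup_K U^\mu_h \ge \int U^\mu_h\,d\nu_* \ge$ ... actually one checks $a\ge -F \ge 0$? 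Not necessarily $a\ge 0$, but $2a + \tfrac12 F \ge -\tfrac12 F$ iff $a \ge -\tfrac12 F$, and $a = \sup_K U^\mu_h \ge \sup_K U^{\nu_*}_h$ fails in general, so the slack must be absorbed more carefully — the correct bookkeeping gives equality $I^{h,K}(\mu)\ge -\tfrac12 F$ with equality forced (by the equality case of $\langle\mu-\nu_*,\mu-\nu_*\rangle\le 0$) only when $\mu=\nu_*$. I would present the chain of inequalities keeping exact track of the affine terms so that the unique minimizer falls out.

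\medskip

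\noindent\textbf{Main obstacle.} The crux is the domination step — establishing $V^*_{K,\omega_h}(z) \le U^\mu_h(z) + \sup_K U^\mu_h$ (or the equivalent inequality on potentials) for every $\mu\in\mcal(\CP^1)$ of finite energy. This is the $\CP^1$-analogue of the principle of domination in weighted potential theory (\cite{ST}, Ch.~II); the regularity hypothesis \eqref{REGULAR} on $K$ and the facts about $V^*_{K,\omega_h}$ from Theorem~\ref{GZ} and Proposition~\ref{WvsU} are precisely what make it go through (the potential of $\nu_*$ is continuous up to a controlled set, $V^*_{K,\omega_h}=0$ on $\supp\nu_*$, etc.). Everything else — the negative semidefiniteness of the energy form, the value $\ecal_h(\nu_*) = \log\mathrm{Cap}_{\omega_h}(K)$, and the strict convexity giving uniqueness — is already available in the excerpt (Proposition~\ref{CONVEXITY}, \eqref{CAPKOMEGA}, Lemma~\ref{CONVEX}). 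So the proof reduces to quoting or reproving the domination principle in this geometric setting and then running the two-line energy comparison above, with the uniqueness of the minimizer inherited from strict convexity of $I^{h,K}$.
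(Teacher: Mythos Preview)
Your computation of the value $I^{h,K}(\nu_*) = -\tfrac12 F = \tfrac12\log\mathrm{Cap}_{\omega_h}(K)$ is correct. The gap is in the lower bound, and it is a real one: the ``domination'' inequality you assume, $V^*_{K,\omega_h}(z) \le U^\mu_h(z) + a$ (equivalently $U^{\nu_*}_h \le U^\mu_h + a + F$), is in the \emph{wrong direction} and is in fact false in general---take $\mu$ with a point mass at some $p\notin K$ and look at $z=p$. What the definition of the extremal function \eqref{VKOMEGA} gives you is the opposite inequality: since $U^\mu_h - a \in SH(\CP^1,\omega_h,K)$, one has $U^\mu_h - a \le V^*_{K,\omega_h}$ everywhere. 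No principle of domination is needed.

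Once you use the inequality in the correct direction your strategy works cleanly. Integrate $U^\mu_h - a \le V^*_{K,\omega_h}$ against $d\nu_*$ and use Theorem~\ref{GZ}(2) ($V^*_{K,\omega_h}=0$ on $\supp\nu_*$) to get $\langle\mu,\nu_*\rangle_{\omega_h} \le a$. Combine this with the expansion of $\langle\mu-\nu_*,\mu-\nu_*\rangle_{\omega_h}\le 0$ (Proposition~\ref{CONVEXITY} and \eqref{CURIOUS}), namely $\ecal_h(\mu) \le 2\langle\mu,\nu_*\rangle_{\omega_h} + F$, to obtain $\ecal_h(\mu)\le 2a+F$, hence
\[
I^{h,K}(\mu) = -\tfrac12\ecal_h(\mu) + a \ge -\tfrac12(2a+F)+a = -\tfrac12 F.
\]
Equality in the semidefiniteness step forces $\mu=\nu_*$ as in the proof of Lemma~\ref{EQMESKapp0}. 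Your ``careful bookkeeping'' paragraph never gets there because you are chasing the reversed inequality.

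For comparison, the paper's proof does \emph{not} invoke the negative semidefiniteness of the energy form at all. It uses only the pointwise bound $U^\mu_h - a \le V^*_{K,\omega_h}$, applied twice: first integrated against $d\mu$, then---after an integration by parts moving $dd^c$ onto $V^*_{K,\omega_h}$---integrated against $d\nu_*$. Your (corrected) route trades one of those applications for the bilinear energy inequality; both are short once the right direction of the extremal-function bound is in hand.
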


 \begin{proof}
\begin{eqnarray} \label{METRICI2a}
2 I^{h, K} (\mu) & = & -  \ecal_h(\mu) +  2\sup_K U^{\mu}_h
\\   &=& -  \int_{\CP^1} (U_{h}^{\mu} - \sup_K U_{h}^{\mu})
 (dd^c U_{h}^{\mu} + \omega_h)  + \sup_K U_{h}^{\mu}.
 \nonumber
\end{eqnarray}

We claim that $ \int_{\CP^1} (U_{h}^{\mu} - \sup_K U_{h}^{\mu})
(dd^c U_{h}^{\mu} + \omega_h) - \sup_K U_{h}^{\mu}$ is maximized
by
$V^*_{K, h}$. By definition, $U^{\mu}_{h} - \sup_{z \in K}
 U^{\mu}_{h}
 \leq 0$ on $K$.
Hence, $U^{\mu}_{h} - \sup_{z \in K} U^{\mu}_{h} \leq
 V_{K, h}^*$. Since  $dd^c U_{h}^{\mu} + \omega_h$ is
 the
 positive measure $d\mu$,
 \begin{eqnarray*}
- 2 I^{h, K} (\mu) &=&   \int_{\C} (U_{h}^{\mu} - \sup_K
U_{h}^{\mu})
 (dd^c U_{h}^{\mu} + \omega_h)  - \sup_K U_{h}^{\mu}
  \\ && \\ &\leq & \int  V_{K, h}^*
 (dd^c U_{h}^{\mu}   + \omega_h) - \sup_K U_{h}^{\mu} \\
  & = & \int (U_{h}^{\mu}   - \sup_{z \in K}
U^{\mu}_{h} )  dd^c   V_{K, h}^* - \sup_K U_{h}^{\mu}
+ \int_{\CP^1}V_{K, h}^* \omega_h
   \\
  & = & \int (U_{h}^{\mu}   - \sup_{z \in K} U^{\mu}_{h} )
  (dd^c   V_{K, h}^* + \omega_h)  + \int_{\CP^1}V_{K, h}^* \omega_h    \\
  & \leq &
    \int V_{K, h}^*   (dd^c V_{K, h}^* + \omega_h) +
    \int_{\CP^1}V_{K, h}^* \omega_h\\
    &= &
F_{K, \omega_h}  . \end{eqnarray*}
In the third line, we integrated $dd^c$ by parts, and used that constants
integrate to $0$ against $dd^c V^*_{K,h}$.
In the next to last
line,
we again use that $U_h^{\mu} - \sup_{z \in K} U_h^{\mu} \leq
 V_{K, h}^*$. In the  last equality, we used
Proposition \ref{WvsU}.

Since  $$
\int_{\CP^1} (U_{h}^{\nu_{h, K}} - \sup_K U_{h}^{\nu_{h, K}} )
(dd^c U_{h}^{\nu_{h, K}} + \omega_h) - \sup_K U_{h}^{\nu_{h, K}} =
\int V_{K, h}^*   (dd^c V_{K, h}^* + \omega_h) + F_{K, \omega_h},
$$ we see that $I^{h, K}$ is  is minimized  by $\nu_{h, K}$.

One easily checks that all of the inequalities are equalities for
$\nu_{ K,h}$. When $K$ is regular, $V_{K, h}^* = (U_{h}^{\nu_{K, h}}
- \sup_K U_{h}^{\nu_{K, h}})$ is continuous. We can determine the
sup by integrating both sides against $\omega_h$ as in Proposition
\ref{WvsU}:
$$\int V_{K, h}^* \omega_h = - \sup_K U_{h}^{\nu_{K, h}}. $$

We have,
\begin{eqnarray*}
- 2 I^{h, K} (\nu_{K, h}) &=&   \int_{\CP^1} (U_{h}^{\nu_{K, h}} -
\sup_K U_{h}^{\nu_{K, h}})
 (dd^c U_{h}^{\nu_{K, h}} + \omega_h)  - \sup_K U_{h}^{\nu_{K, h}}    \\
  & = &
    \int V_{K, h}^*   (dd^c V_{K, h}^* + \omega_h) +
    \int_{\CP^1}V_{K, h}^* \omega_h
    = 
F_{K, \omega_h}  . \end{eqnarray*}

On the other hand, since $U_{\omega_h}^{\nu_{K,h}}=-F_{K,\omega_h}$ on $K$,
we have that
$$ \log \mbox{Cap}_{\omega_h}(K)=\ecal_{\omega_h}(\nu_{K,h})=
\int U_{\omega_h}^{\nu_{K,h}} d\nu_{K,h}=-F_{K,\omega}.$$
This completes the proof.
\end{proof}

\section{\label{LDP} Large deviations theorems in genus zero: Proof of Theorem \ref{g0}}

In this section, we prove Theorem \ref{g0}. We already know that
$\tilde
I^{h, K}$ is a good rate function. We still need to
prove that it actually is the rate function of the large
deviations principle.
 As in
\cite{BG} (Section 3), it is equivalent to prove that
$$- I(\sigma):= \limsup_{\delta \to 0} \limsup_{N \to \infty}
\frac{1}{N^2} \log {\bf Prob} _N(B(\sigma, \delta)) =
\liminf_{\delta \to 0} \liminf_{N \to \infty} \frac{1}{N^2} \log
{\bf Prob}_N(B(\sigma, \delta)). $$ See Theorem 4.1.11 of
\cite{DZ}.

The proof follows the approach in
\cite{BG,BZ} of
large deviations principles for empirical measures of eigenvalues
of certain random matrices. However, we take full advantage
of the compactness of $\mcal(\CP^1)$, and of the properties of
the Green function $G$, see Lemma
\ref{lem-ue} and Proposition \ref{LSCJG}.

We first prove the result without taking the normalizing constants
$Z_N(h)$ of Proposition \ref{FSVOLZETA2intro} into account. Then
in \S \ref{SLATERG0}, we determine the logarithmic asymptotics of
the normalizing constants.

 \subsection{Heuristic derivation of $I^{h, K}$}

Since the proof of the large deviations
principle
is technical, we
first give a formal or heuristic derivation of the rate function
(\ref{IHKLOCAL}) from   the expression for the joint probability
distribution of zeros in Lemma \ref{APPROXRATE}  in the spirit of
the discussion in \S \ref{sec-lowbrow}. We then fill in the
technical gaps to give a rigorous proof.

\subsubsection{Heuristic derivation}

 Lemma \ref{APPROXRATE} expresses $\vec K_n^N(\zeta_1, \dots,
 \zeta_N)$ as a product of three factors: the normalizing constant
 $\frac{1}{\hat{Z}_N(h)}$, the factor $e^{- N^2 \left( -\frac{1}{2} \ecal^h_N(\mu_{\zeta}) + \frac{N+1}{N}  J_N^{h, \nu}
 (\mu_{\zeta})\right)}$
 and the integration measure $\prod_{j = 1}^N e^{-
2 N \phi(\zeta_j)} d^2 \zeta_j $. The normalizing constant will be
worked out asymptotically in \S \ref{SLATERG0} using the fact that
$\vec K_n^N(\zeta_1, \dots,
 \zeta_N)$ is a probability measure. The integration measure $\prod_{j = 1}^N e^{-
2 N \phi(\zeta_j)} d^2 \zeta_j $ is an invariantly defined smooth
$(N, N)$ on $(\CP^1)^N$ of finite mass independent of $N$, and
thus does not contribute to the logarithmic asymptotics. Hence,
only the factor $\left( -\frac{1}{2} \ecal^h_N(\mu_{\zeta}) +
\frac{N+1}{N}  J_N^{h, \nu}
 (\mu_{\zeta})\right)$ contributes to the rate function.

 The term $\ecal^h_N(\mu_{\zeta})$ closely resembles the energy
 except that the diagonal $D$ has been punctured out of the
 domain of integration, as it must since $\mu_{\zeta}$ has
 infinite energy. It must be shown that the true energy is the correct limiting
 form when measuring log
 probabilities of balls of measures.

The second term satisfies,
\begin{equation} \lim_{N \to \infty} J_N^{h, \nu} (\mu_{\zeta})  = \log ||
e^{U^{\mu}_h} ||_{L^N(\nu)} \uparrow \log || e^{U^{\mu}_h}
||_{L^{\infty}(\nu)} = \sup_K U^{\mu}_h
\end{equation}
monotonically as $N \to \infty$. Thus,  it is natural to
conjecture that the rate function for large deviations of
empirical measures is given by (\ref{IHKLOCAL}).

We now turn to the rigorous proof.

\subsection{Proof of the upper bound}
In this section,
we prove the upper bound part of the large deviation
principle, that is we prove that
\begin{equation} \label{UBTOPROVE}
    \lim_{\delta \downarrow 0} \limsup_N \frac{1}{N^2}  \log \; {\bf
Prob}_N (B(\sigma, \delta))
\leq - \tilde I^{h,K}(\sigma).
\end{equation}

The first step is:
\begin{lem}\label{BMUB}  Fix $\epsilon>0$. If $\nu$ satisfies
    the Bernstein-Markov condition (\ref{BM}),
then there exists a $N_0=N_0(\epsilon)$ such that for all
$N>N_0$ and all $\mu_\zeta\in \mcal(\CP^1)$,
$$ \log ||e^{U^{\mu_\zeta}_h}||_{L^N(\nu) }
\geq \sup_{z \in K} U^{\mu_\zeta}_h - \epsilon \,.$$
\end{lem}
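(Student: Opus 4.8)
The plan is to reduce the claimed inequality directly to the Bernstein--Markov condition (\ref{BM}), applied to the single section $s_\zeta(z)=\prod_{j=1}^N(z-\zeta_j)\,e^N\in H^0(\CP^1,\ocal(N))$ whose zero divisor is $\zeta$. The point is that both $J_N^{h,\nu}(\mu_\zeta)=\log\|e^{U_h^{\mu_\zeta}}\|_{L^N(\nu)}$ and $\sup_{z\in K}U_h^{\mu_\zeta}(z)$ can be expressed in terms of $s_\zeta$ through an exact identity, and after this rewriting the desired bound is exactly Bernstein--Markov.

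First I would combine (\ref{LOGMUZ}) with Lemma \ref{GREENNORM}(i), applied to $s=s_\zeta$, to obtain the pointwise identity
$$ e^{N U_h^{\mu_\zeta}(z)}=\exp\left(\int_{\CP^1}G_h(z,w)\,dd^c\log\|s_\zeta(w)\|_{h^N}^2\right)=\|s_\zeta(z)\|_{h^N}^2\;e^{-C_\zeta}, $$
where $C_\zeta:=\int_{\CP^1}\log\|s_\zeta\|_{h^N}^2\,\omega_h$ is a finite real constant independent of $z$; finiteness is immediate from Lemma \ref{NORMSUPHI}, which in fact gives $C_\zeta=N\bigl(\int\phi\,d\mu_\zeta-E(h)\bigr)$. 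Integrating against $d\nu$ over $\CP^1$ and using $\nu(K)=1$,
$$ e^{N J_N^{h,\nu}(\mu_\zeta)}=\int_{\CP^1}e^{N U_h^{\mu_\zeta}(z)}\,d\nu(z)=e^{-C_\zeta}\,\|s_\zeta\|_{G_N(h,\nu)}^2, $$
whereas taking the supremum over $z\in K$ (using that $\log$ is increasing) gives
$$ e^{N\sup_{z\in K}U_h^{\mu_\zeta}(z)}=e^{-C_\zeta}\,\sup_{z\in K}\|s_\zeta(z)\|_{h^N}^2. $$
The normalization $e^{-C_\zeta}$ is the same factor on both right-hand sides, so it cancels on comparing them.

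Next I would invoke (\ref{BM}). Given $\epsilon>0$, apply the Bernstein--Markov inequality with $\epsilon/4$ in place of $\epsilon$, obtaining a constant $C_{\epsilon/4}\ge 1$ with $\sup_K\|s\|_{h^N}\le C_{\epsilon/4}\,e^{\epsilon N/4}\|s\|_{G_N(h,\nu)}$ for every $s\in H^0(\CP^1,\ocal(N))$; squaring and specializing to $s=s_\zeta$ yields
$$ \sup_{z\in K}\|s_\zeta(z)\|_{h^N}^2\le C_{\epsilon/4}^2\,e^{\epsilon N/2}\,\|s_\zeta\|_{G_N(h,\nu)}^2. $$
Feeding the three displays above into this and taking $\frac{1}{N}\log$,
$$ J_N^{h,\nu}(\mu_\zeta)\;\ge\;\sup_{z\in K}U_h^{\mu_\zeta}(z)-\frac{\epsilon}{2}-\frac{2\log C_{\epsilon/4}}{N}, $$
so that $N_0(\epsilon):=\lceil 4\log C_{\epsilon/4}/\epsilon\rceil$ works: for all $N>N_0$ the last term is at most $\epsilon/2$.

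The computation is short, and the only point needing care — really just a bookkeeping matter — is the uniformity in $\zeta$: the auxiliary constant $C_\zeta$ must be finite and must appear as exactly the same factor on both sides so that it drops out, which is what forces $N_0$ to depend on $\epsilon$ only through the Bernstein--Markov constant $C_{\epsilon/4}$ and not on the configuration $\zeta$. It is also worth noting that the statement concerns empirical measures $\mu_\zeta$ of $N$ atoms, for which the realizing section $s_\zeta$ exists; for a general $\mu\in\mcal(\CP^1)$ the identification with $\|\cdot\|_{G_N(h,\nu)}^2$ is unavailable and one would instead have to argue via a polynomial approximation of $U_h^\mu$, but that is not needed here.
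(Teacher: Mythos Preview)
Your proof is correct and follows essentially the same approach as the paper: both rewrite $e^{N U_h^{\mu_\zeta}}$ via Lemma~\ref{NORMSUPHI} (equivalently, Lemma~\ref{GREENNORM}(i) together with~(\ref{LOGMUZ})) as $\|s_\zeta\|_{h^N}^2$ times a $z$-independent factor, apply the Bernstein--Markov inequality to $s_\zeta$, cancel that factor, and absorb the residual $\tfrac{1}{N}\log C_\epsilon$ term by taking $N$ large. Your $\epsilon$-bookkeeping is in fact slightly more careful than the paper's.
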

\begin{proof}
We are assuming that, for  all $s \in H^0(C, L^N)$,
\begin{equation} \sup_{z \in K} |s(z)|_{h^N} \leq C_{\epsilon}
e^{\epsilon N} \left(\int_K |s(z)|^2_{h^N} d \nu(z) \right)^{1/2}.
\end{equation}
By Lemma \ref{NORMSUPHI} we may write
$$|s_\zeta(z)|^2_{h^N} =
e^{N U_h^{\mu_\zeta}(z)} e^{  N (\int \phi d\mu_{\zeta} - E(h))}
$$ Hence,  \begin{equation}
    \begin{array}{lll} ||e^{U_h^{\mu_\zeta}}||_{L^N} e^{   (\int \phi d\mu_{\zeta} - E(h))}& = &
        \left(\int_K |s_\zeta(z)|^2_{h^N} d \nu(z)
\right)^{1/N}\\ && \\
& \geq & \left( C^{-1}_{\epsilon} e^{- N \epsilon} \sup_{z \in K}
|s_\zeta(z)|_{h^N}^2  \right)^{\frac{1}{N}} \\ && \\
& \implies & \log ||e^{U_h^{\mu_\zeta}}||_{L^N} \geq \sup_{z \in K} U_h^{\mu_\zeta} - \epsilon
+ \frac1N\log C_\epsilon, \end{array} \end{equation} for all $\epsilon > 0$.
\end{proof}

Write
$$\Theta_N=-\frac{1}{N^2} \log\hat{Z}_N(h)\,.$$
As we will see in the course of the proof of Lemma
\ref{NC}, $\Theta_N\to_{N\to\infty} \log Cap_{\omega_h}(K)$.

By  Lemma \ref{JPDINV} and Lemma \ref{APPROXRATE},
\begin{equation}
    \label{eq-180209b}
    \frac{1}{N^2}  \log \; {\bf Prob}_N (B(\sigma, \delta))
= \frac{1}{N^2} \log \int_{\zeta \in (\CP^1)^N : \mu_{\zeta} \in
B(\sigma, \delta)} e^{- N^2 I_N(\mu_{\zeta}) } \kappa_N
+\Theta_N.
\end{equation}
Fix $M\in \R$ and let
$G_h^M=G_h\vee (-M)$ be the truncated Green function.
By Lemma \ref{lem-ue}, $G_h^M$ is continuous on $\CP^1\times \CP^1$.
Further, with notation as in \eqref{eq-180209a},
\begin{eqnarray*}
    -\frac{1}{N^2}\sum_{i<j}G_h(\zeta_i,\zeta_j)&\geq&
-\frac{1}{N^2}\sum_{i<j}G_h^M(\zeta_i,\zeta_j)\\
&\geq&
-\frac12 \int\int_{\CP^1\times\CP^1}
G_h^M(z,w)d\mu_\xi(z) d\mu_\xi(w)-\frac{C(M)}{N}=
\ecal_h^M(\mu_\xi)
-\frac{C(M)}{N}
,
\end{eqnarray*}
where the constant $C(M)$ does not depend on $\xi$. Using Lemma
\ref{BMUB} (and also Corollary \ref{UUB}), we then have that for
any $\epsilon>0$ and all $N>N_0(\epsilon)$,
\begin{equation}\begin{array}{lll}
 \frac{1}{N^2}  \log \; {\bf Prob}_N (B(\sigma, \delta))
 &\leq & \frac{1}{N^2}\log\int_{\xi\in (\CP^1)^N: \mu_\xi\in B(\sigma,\delta)}
 e^{\frac{N^2}{2}\ecal_h^M(\mu_\xi)-N^2J_h^K(\mu_\xi)}
\kappa_N
\\ && \\ &+&
  (
  \Theta_N
 +\frac{C'(M)}{N}+\epsilon),
\end{array} \end{equation}
 for some constant $C'(M)$. It follows that
$$   \limsup_N \frac{1}{N^2}  \log \; {\bf
Prob}_N (B(\sigma, \delta))\leq
\Theta_N+ \limsup_{\delta \downarrow
0} \sup_{\mu \in B(\sigma, \delta)} - \left( -
\frac12\ecal_h^M(\sigma) + J_h^K(\sigma)\right)\,.$$
 Here, we use that
\begin{equation} \label{ZEROa} \frac{1}{N^2}\log \int_{(\CP^1)^N}
\kappa_N=
 O(\frac{\log N}{N}),
\end{equation}
which follows from Lemma \ref{INVAR}.

 It then follows from the
continuity of $\ecal_h^M(\sigma)$ and the lower semi-continuity of
$J_h^K(\sigma)$ (see Lemma \ref{LSCJG}) that
$$  \lim_{\delta \downarrow 0} \limsup_N \frac{1}{N^2}  \log \; {\bf
Prob}_N (B(\sigma, \delta))\leq
\lim_{N\to\infty} \Theta_N+
\frac12\ecal_h^M(\sigma)-J_h^K(\sigma) + \epsilon\,.$$ Since
$\ecal_h^M(\sigma) \to \ecal_h(\sigma)$ as $M \to \infty$  by
monotone convergence, and since $\epsilon$ is arbitrary,  we
obtain \eqref{UBTOPROVE}. \qed

\subsection{Proof of the lower bound}
In this section, we prove that
\begin{equation} \label{LBTOPROVE}
    \lim_{\delta \downarrow 0} \liminf_N \frac{1}{N^2}  \log \; {\bf
Prob}_N (B(\sigma, \delta))
= - I(\sigma).
\end{equation}
The strategy is again similar to \cite{BG} and \cite{BZ}. Note first that
to prove \eqref{LBTOPROVE},
it is enough to find, for any $\sigma$ with $I(\sigma)<\infty$,
a sequence $\sigma_\epsilon\to_{\epsilon\to 0}
\sigma$ weakly
in $\mcal(\CP^1)$ such that $I(\sigma_\epsilon)\to I(\sigma)$
and \eqref{LBTOPROVE} holds for $\sigma_\epsilon$.

So, define $\sigma_\epsilon=e^{\epsilon \Delta_\omega}\sigma$ as in
Lemma \ref{REGLEMMA} of the appendix. By
 that lemma, the property $I(\sigma_\epsilon)\to_{\epsilon\to
 0}I(\sigma)$ holds. It thus only remains to prove \eqref{LBTOPROVE}
 when $\sigma$ is replaced by $\sigma_\epsilon$. Thus, the
 large deviations lower bound is a consequence of the following lemma.
 \begin{lem}
     \label{lbsmoothed}
     Let $\sigma=f\omega\in \mcal(\CP^1)$ with $f$ a strictly
     positive and continuous function on $\CP^1$. Then,
     \eqref{LBTOPROVE} holds.
 \end{lem}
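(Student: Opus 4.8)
The plan is to establish the large deviations lower bound
\[
\liminf_{N\to\infty}\frac{1}{N^2}\log\PR_N(B(\sigma,\delta))\;\ge\;-\tilde I^{h,K}(\sigma)\qquad\text{for every }\delta>0 ;
\]
since the matching upper bound for the $\limsup$ over $N$ (after $\delta\downarrow 0$) is \eqref{UBTOPROVE} and $\liminf\le\limsup$ always, this forces the liminf--liminf and limsup--limsup in the definition of $-I(\sigma)$ to coincide, which is \eqref{LBTOPROVE}. Since $f$ is continuous and strictly positive on the compact surface $\CP^1$, the density of $\sigma$ with respect to a fixed smooth positive volume form is bounded between two positive constants, so for each $N$ we may partition $\CP^1$ into Borel cells $A_1^N,\dots,A_N^N$ with $\sigma(A_i^N)=1/N$ and $r_N:=\max_i\operatorname{diam}(A_i^N)=O(N^{-1/2})\to0$. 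Put $\sigma_i^N:=N\,\sigma|_{A_i^N}\in\mcal(A_i^N)$ and let $\tilde P_N:=\sigma_1^N\otimes\cdots\otimes\sigma_N^N$, a probability measure on $\prod_iA_i^N\subset(\CP^1)^N$. Since the cells are disjoint, every $\zeta\in\prod_iA_i^N$ has distinct coordinates, and transporting the atom of mass $1/N$ at $\zeta_i$ onto $\sigma|_{A_i^N}$ shows $d_W(\mu_\zeta,\sigma)\le r_N$; in particular $\mu_\zeta\in B(\sigma,\delta)$ once $N\ge N_0(\delta)$.

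By Lemma \ref{APPROXRATE} and exactly as in \eqref{eq-180209b}, $\frac{1}{N^2}\log\PR_N(B(\sigma,\delta))=\Theta_N+\frac{1}{N^2}\log\int_{\zeta:\,\mu_\zeta\in B(\sigma,\delta)}e^{-N^2 I_N(\mu_\zeta)}\,\kappa_N$, where by Lemma \ref{INVAR} $\kappa_N=\kappa^{\otimes N}$ with $\kappa=e^{-2\phi}\,d^2\zeta$ a fixed smooth positive measure on $\CP^1$. For $N\ge N_0(\delta)$ I restrict the integral to $\prod_iA_i^N$ and pass to the reference measure $\tilde P_N$; the Radon--Nikodym density $d\kappa_N/d\tilde P_N=\prod_i d\kappa/d\sigma_i^N$ is bounded below on $\prod_iA_i^N$ by $(c/N)^N$ for a constant $c>0$ (again because $f$, $e^{-2\phi}$ and the reference volume form have densities bounded above and below). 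Jensen's inequality then gives
\[
\frac{1}{N^2}\log\PR_N(B(\sigma,\delta))\;\ge\;\Theta_N+O\!\Big(\frac{\log N}{N}\Big)-\E_{\tilde P_N}\!\big[I_N(\mu_\zeta)\big] ,
\]
the application of Jensen being justified since $\E_{\tilde P_N}[(I_N(\mu_\zeta))^-]<\infty$: indeed $\ecal^h_N\le C_G$, while $\E_{\tilde P_N}[(J_N^{h,\nu}(\mu_\zeta))^-]\le\int(U^\nu_h)^-\,d\sigma<\infty$ because $U^\nu_h$ is $\omega_h$-subharmonic hence locally integrable. It therefore remains to prove $\limsup_N\E_{\tilde P_N}[I_N(\mu_\zeta)]\le-\frac12\ecal_h(\sigma)+J^{h,K}(\sigma)=I^{h,K}(\sigma)$, for then $\liminf_N\frac{1}{N^2}\log\PR_N(B(\sigma,\delta))\ge\lim_N\Theta_N+\frac12\ecal_h(\sigma)-J^{h,K}(\sigma)$, which equals $-\tilde I^{h,K}(\sigma)$ by the same computation carried out in the proof of \eqref{UBTOPROVE} (using $\lim_N\Theta_N$ from Lemma \ref{NC}).

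Recall $I_N(\mu_\zeta)=-\frac12\ecal^h_N(\mu_\zeta)+\frac{N+1}{N}J_N^{h,\nu}(\mu_\zeta)$. For the energy term, the product structure of $\tilde P_N$ and the disjointness of the cells yield the exact identity
\[
\E_{\tilde P_N}\!\big[\ecal^h_N(\mu_\zeta)\big]=\sum_{i\ne j}\int_{A_i^N}\!\!\int_{A_j^N}G_h\,d\sigma\,d\sigma=\ecal_h(\sigma)-\sum_{i=1}^N\int_{A_i^N\times A_i^N}G_h\,d\sigma\,d\sigma ,
\]
and the lower bound $G_h(z,w)\ge 2\log|z-w|-C$ (Proposition \ref{prop-grub}) together with the boundedness of the density of $\sigma$ and $\operatorname{diam}(A_i^N)\le r_N$ show the correction term is $O(r_N^2|\log r_N|)=o(1)$; here $\ecal_h(\sigma)$ is finite because $\log|z-w|$ is integrable near the diagonal in two real dimensions and $G_h$ is bounded above, so $\E_{\tilde P_N}[\ecal^h_N(\mu_\zeta)]\to\ecal_h(\sigma)$. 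For the potential term, $J_N^{h,\nu}(\mu)=\log\|e^{U^\mu_h}\|_{L^N(\nu)}\le\log\|e^{U^\mu_h}\|_{L^\infty(\nu)}\le\sup_KU^\mu_h=J^{h,K}(\mu)$, and since $J^{h,K}$ is upper semicontinuous (Lemma \ref{LSCJG}(1)), for any $\eta>0$ there is $\delta'>0$ with $J^{h,K}\le J^{h,K}(\sigma)+\eta$ on $B(\sigma,\delta')$; for $N$ large every $\mu_\zeta$ with $\zeta\in\prod_iA_i^N$ lies there, so $\E_{\tilde P_N}[J_N^{h,\nu}(\mu_\zeta)]\le J^{h,K}(\sigma)+\eta$. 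Combining the two estimates, $\limsup_N\E_{\tilde P_N}[I_N(\mu_\zeta)]\le-\frac12\ecal_h(\sigma)+J^{h,K}(\sigma)+\eta$, and letting $\eta\downarrow0$ completes the proof.

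The main obstacle is the energy term: $\ecal^h_N$ integrates $G_h$ only off the diagonal, and $I_N$ is genuinely $+\infty$ on the locus where two coordinates coincide, so one cannot bound $e^{-N^2 I_N}$ pointwise from below on $\prod_iA_i^N$. Building the reference product measure $\tilde P_N$ out of $\sigma$ itself is what overcomes this, since it makes $\E_{\tilde P_N}[\ecal^h_N(\mu_\zeta)]$ equal to $\ecal_h(\sigma)$ up to the self-cell corrections only, and the merely logarithmic singularity of $G_h$ together with $\operatorname{diam}(A_i^N)\to0$ kills those corrections; the hypotheses that $f$ be continuous and strictly positive are used precisely to make the cells have vanishing diameter and to keep the cost of the change of reference measure down to $e^{o(N^2)}$.
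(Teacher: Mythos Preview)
Your argument is correct and takes a genuinely different route from the paper's. The paper follows \cite{BZ}: it first constructs a well-separated discrete configuration $Z_1,\dots,Z_N$ approximating $\sigma$ with $d(Z_i,Z_j)\ge C/\sqrt{N}$, then integrates $e^{-N^2 I_N}\kappa_N$ over the product of tiny balls $d(\zeta_j,Z_j)\le\eta/N$, controlling $\sum_{i\ne j}G_h(\zeta_i,\zeta_j)$ by a pointwise comparison with $\sum_{i\ne j}G_h(Z_i,Z_j)$ (using continuity of $G_h$ off the diagonal) and handling the near-diagonal terms via the separation condition. Your approach replaces this pointwise estimate by an averaged one: you partition into $N$ cells of equal $\sigma$-mass, take as reference the product $\tilde P_N$ of the normalized restrictions of $\sigma$, and invoke Jensen's inequality so that only $\E_{\tilde P_N}[I_N(\mu_\zeta)]$ has to be bounded. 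The product structure then yields the exact identity $\E_{\tilde P_N}[\ecal^h_N(\mu_\zeta)]=\ecal_h(\sigma)-\sum_i\int_{A_i^N\times A_i^N}G_h\,d\sigma\,d\sigma$, and the diagonal-cell corrections vanish by the integrability of the logarithm in two real dimensions. For the $J_N^{h,\nu}$ term the paper uses the full continuity of $J^{h,K}$ (both parts of Lemma~\ref{LSCJG}), whereas you only need the pointwise bound $J_N^{h,\nu}\le J^{h,K}$ together with upper semicontinuity. Both approaches consume the hypotheses on $f$ in the same way: strict positivity and continuity are what give cells of shrinking diameter and keep the change of reference measure $d\kappa_N/d\tilde P_N\ge(c/N)^N$ at cost $e^{o(N^2)}$. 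Your argument is a bit more streamlined in that it bypasses the explicit near-diagonal counting, at the price of a short justification that Jensen is applicable (which you supply). One cosmetic point: your reference to Lemma~\ref{NC} for $\lim_N\Theta_N$ is circular as stated, since that lemma is proved \emph{from} the upper and lower bounds; but this is exactly the forward reference the paper itself makes in the upper-bound section, and the logic unwinds once one keeps the $\Theta_N$ term explicit until \S\ref{SLATERG0}.
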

 \begin{proof}
We follow \cite{BZ}, Lemma 2.5.
It will be convenient to consider three charts on $\CP^1$, denoted
$W_0, W_1,W_2$, with distance $d_{W_i}$ on $W_i$,
that cover $\CP^1$, and a constant $R$,
in such a way that for any two points $z,w\in \CP^1$ there exists
a chart $W_{z,w}$ with distance
$d_{W_{z,w}}$ so that in local coordinates, $d(z,w):=d_{W_{z,w}}(z,w)\leq R$
(If more than one such chart exists for a given pair $z,w$, fix one
arbitrarily as $W_{z,w}$; The charts $W_0$ can be taken as the standard
chart $\C$, with $W_1$ and $W_2$ its translation to two fixed
distinct points in $\CP^1$.)

Construct a sequence of discrete probability measures
$$d\sigma_N = \frac{1}{N} \sum_{j = 1}^N \delta_{Z_j} \in B(\sigma, \delta)$$
with the following properties:
\begin{enumerate}
\item $\sigma_N\in B(\sigma,\delta/2) $ for all $N$ large;
\item $d(Z_i, Z_j) \geq \frac{C(\sigma,\delta)}{\sqrt{N}}. $
\end{enumerate}
(Since in a local chart, $\sigma$ possesses a bounded
density with respect to Lebesgue's measure,
such a sequence can be constructing by adapting to the local
charts $W_i$ the construction in the proof of Lemma 5 in
\cite{BZ}.) Define
$$D_I^{\eta} = \{\zeta \in (\CP^1)^N: d(\zeta_j,  Z_j) \leq
\frac{\eta}{N}, \;\; j = 1, \dots, N \}. $$ Then, for $\eta$ small
enough and all $N$ large, all $\zeta \in D_I^{\eta}$ satisfy that
$\mu_{\zeta} \in B(\sigma, \delta).$ Since $D_I^{\eta} \subset
B(\sigma, \delta),$
\begin{equation} \label{eq-180209c}
    {\bf
    Prob}_N (B(\sigma, \delta))  \geq \int_{D_I^{\eta}}  e^{- N^2
I_N(\mu_{\zeta}) }  \kappa_N+\Theta_N.
\end{equation}
(Recall Lemma \ref{JPDINV} for the definition of the $(N,N)$ form
$\kappa_N$.)
 By Proposition \ref{prop-grub} and our
construction, there exists a constant $C_1=C_1(\eta,\sigma)$ with
$C_1\to_{\eta\to 0} 0$ such that for any $\xi\in D_I^\eta$ and
$i,j\in \{1,\ldots,N\}$, $i\neq j$,
$$G_h(\xi_i,\xi_j)\geq G_h(Z_i,Z_j)-C_1(\eta)\,.$$
For $\epsilon>0$, set
$$\ecal_h^\epsilon(\sigma):=
\int_{\CP^1\times \CP^1\setminus D_\delta}
G_h(z,w) d\sigma(z)d\sigma(w)\,,$$
where
$D_\epsilon=\{(z,w)\in (\CP^1\times \CP^1): d(z,w)<\epsilon\}$.
We have by monotone convergence that
$$\ecal_h(\sigma)=
\lim_{\epsilon\to 0}
\ecal_h^\epsilon(\sigma).$$
Because $G_h$ is continuous on $\CP^1\times \CP^1\setminus D_\epsilon$,
we have that
$$N^{-2}\sum_{i\neq j, d(Z_i,Z_j)\geq\epsilon} G_h(Z_i,Z_j)\geq
\ecal_h(\sigma)-C_2(\epsilon,\delta)\,,$$
where for fixed $\epsilon$, $C_2(\epsilon,\delta)\to_{\delta\to 0} 0$.
On the other hand, let
$J_i(r)=\#\{j\in \{1,\ldots,N\}: j\neq i, d(Z_i,Z_j)\in
[r/\sqrt{N},(r+1)/\sqrt{N}]\}$.
From our construction, there exists a constant $C_3=C_3(\sigma,\delta)$
such that $J_i(r)\leq C_3 r$ for any $i\in \{1,\ldots,N\}$ and
all $r<\epsilon \sqrt{N}$, if $\epsilon$ is smaller than some $\epsilon_0$
independent of $\delta$. Thus, applying
Proposition
\ref{prop-grub},
we have that
$$N^{-2}\sum_{i\neq j, d(Z_i,Z_j)\leq\epsilon} |G_h(Z_i,Z_j)|\leq
\frac{C_3(\epsilon,\delta,\sigma)}{N^2}
\sum_{i=1}^N \sum_{k=1}^{\epsilon \sqrt{N}}
k \log(k/\sqrt{N}) =
O(\frac{\log N}{\sqrt{N}})\,.$$
For $\epsilon'>0$ given,
fix $\epsilon>0$ so that
$$|\ecal_h(\sigma)-
\ecal_h^\epsilon(\sigma)|<\epsilon'\,.$$
Then, taking $N\to\infty$
we conclude that
$$\limsup_{N\to\infty}
|N^{-2}\sum_{i\neq j} G_h(Z_i,Z_j)-
\ecal_h(\sigma)|\leq C_2(\epsilon,\delta)+\epsilon'\,.$$
In particular, taking $\delta=\delta(\epsilon')$ small enough gives
$$\limsup_{N\to\infty}
|N^{-2}\sum_{i\neq j} G_h(Z_i,Z_j)-
\ecal_h(\sigma)|\leq 2\epsilon'\,.$$
By Proposition \ref{LSCJG}, reducing $\delta$ further if necessary, we
also have $|J^{h,K}(\sigma_N)-
J^{h,K}(\sigma)|\leq \epsilon'$.
Combining these estimates and substituting in \eqref{eq-180209c},
one gets that for any $\epsilon'>0$ and all $N$ large enough,
\begin{equation}
    \label{ofer-new}
    {\bf  Prob}_N (B(\sigma, \delta))  \geq
e^{-N^2I(\sigma)-3\epsilon' N^2} \int_{D_I^{\eta}}  \kappa_N
\end{equation}

To complete the proof, we again use (\ref{ZEROa}). Indeed, as
above (see Proposition \ref{INVAR}), $\kappa$
is a smooth positive $(1,1)$ form on
 $\CP^1$. Now, $D_I^{\eta} \subset (\CP^1)^N$ is a product of the
 one-complex dimensional sets
$D_j^{\eta; N} : = \{\zeta_j: d(\zeta_j,  Z_j) \leq
\frac{\eta}{N}\} \subset \CP^1$. Hence, $$ \int_{D_I^{\eta}}  \kappa_N
=\left(\int_{D_1^{\eta; N}} \kappa
 \right)^N
. $$
 Since $\kappa$
 is a smooth area form,
 $\int_{D_1^{\eta; N}}  \kappa
 \sim C N^{-2}. $ It follows
 that
 $$ \frac{1}{N^2} \log \int_{D_I^{\eta}}  \kappa_N
= O ( \frac{\log N}{N}).$$
Combined with \eqref{ofer-new} and the fact that $\epsilon'$ was arbitrary,
this completes the proof.
\end{proof}

\subsection{\label{SLATERG0} The normalizing constant: Proof of Lemma \ref{NC}}
Finally, we consider the normalizing constants of Proposition
\ref{FSVOLZETA2intro},
 in particular the   determinant $\det \acal_N(h, \nu) = \det \left(
\langle z^k, \psi_{\ell} \rangle \right)$ of the change of basis
matrix (\ref{ACALH}).  Here, $\langle \cdot, \cdot\rangle$ is the
inner product $G_N(h, \nu)$. The same asymptotics have bee studied
before in the theory of orthogonal polynomials (see e.g. \cite{B})
and in the setting of line bundles in   \cite{Don2,BB}. The
following gives an alternative to the proof in  \cite{BB} in the
special case at hand.

 We claim that $$\lim_{N \to \infty}
\frac{1}{N^2} \log \hat{Z}_N(h) = \frac{1}{2} \log
\mbox{Cap}_{\omega_h} (K).
$$

\begin{proof}
We prove this by combining the  large deviations result for the
un-normalized probability measure with the fact that $\PR_N$ is a
probability measure. By Lemma \ref{GLOBALMAX} and proof of the
large deviations upper bound,

\begin{equation} \begin{array}{lll}  0 & = & \lim_{N \to \infty} \frac{1}{N^2} \log {\bf Prob}
_N(\mcal(\CP^1) ) \\ && \\ & \leq& \limsup_{N \to \infty}
\frac{-1}{N^2} \log \hat{Z}_N(h)   - \inf_{\mu \in \mcal(\CP^1)}
I^{h, K} (\mu) \\ && \\
& = & \limsup_{N \to \infty} \frac{-1}{N^2} \log \hat{Z}_N(h)   -
I^{h, K}(\nu_{h, K})  \\ && \\
& = & \limsup_{N \to \infty} \frac{-1}{N^2} \log \hat{Z}_N(h)
-\frac{1}{2} \log \mbox{Cap}_{\omega_h} (K).
\end{array}
\end{equation}
A similar argument using the large deviations lower bound shows
the reverse inequality for $\liminf_{N\to\infty} \frac{-1}{N^2} \log
\hat{Z}_N.$

\end{proof}

\begin{cor}
$$  \lim_{N \to \infty}
\frac{1}{N^2}  \log |\det \acal_N(h)|^{-2} = -  \frac{1}{2} E(h) +
\frac12 \log \mbox{Cap}_{\omega_h} (K).
$$
\end{cor}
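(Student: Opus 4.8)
The plan is to read this corollary directly off Lemma \ref{NC} together with the explicit relation (\ref{NORMALIZINGCONSTANTS}) between the normalizing constant $\hat Z_N(h)$ and $|\det\acal_N(h)|^{-2}$. Recall from Proposition \ref{FSVOLZETA2intro} that $Z_N(h)=|\det\acal_N(h)|^{-2}$ and that (\ref{NORMALIZINGCONSTANTS}) asserts
$$\hat Z_N(h) = |\det\acal_N(h)|^{-2}\, e^{-\left(-\frac12 N(N-1)+N(N+1)\right)E(h)},$$
with $E(h)$ the constant of (\ref{EH}). The first step is therefore to take logarithms of this identity, which gives
$$\log|\det\acal_N(h)|^{-2} = \log\hat Z_N(h) + \left(-\frac12 N(N-1)+N(N+1)\right)E(h).$$

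The second step is to divide through by $N^2$ and let $N\to\infty$. The prefactor $-\frac12 N(N-1)+N(N+1)$ is a polynomial of degree two in $N$, so $N^{-2}\bigl(-\frac12 N(N-1)+N(N+1)\bigr)$ converges to its leading coefficient, producing the $E(h)$-contribution in the statement; at the same time Lemma \ref{NC} gives $N^{-2}\log\hat Z_N(h)\to\frac12\log\mbox{Cap}_{\omega_h}(K)$. Adding these two limits yields the asserted formula for $\lim_N N^{-2}\log|\det\acal_N(h)|^{-2}$.

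There is essentially no obstacle at this last step: all the substance has already been absorbed into the statements being invoked. The real work is in Lemma \ref{NC}, whose proof combines the large deviations upper and lower bounds of Theorem \ref{g0} with the fact that $\PR_N$ has total mass one, and in the bookkeeping (Lemmas \ref{JPDINV} and \ref{NORMSUPHI}, identities (\ref{DELTASQUARE}) and (\ref{GRID})) that produced (\ref{NORMALIZINGCONSTANTS}) in the first place. The one point one must handle carefully is the exact value and sign of the coefficient of $E(h)$, which has to be tracked consistently through the cancellation of the $N^2\int\phi\,d\mu_\zeta$ terms inside Lemma \ref{JPDINV}, since any slip there passes verbatim into the final constant. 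As noted after Lemma \ref{NC}, the argument furnishes an alternative, potential-theoretic route to the leading $N^2$ asymptotics of $|\det\acal_N(h)|^2$ — equivalently of the volume of the unit ball $\bcal^2(L^N,h^N)$ — which were obtained by Bergman-kernel methods in \cite{BB}.
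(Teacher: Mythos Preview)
Your approach is correct and is exactly the paper's own proof: the Corollary is deduced by combining Lemma \ref{NC} with the relation (\ref{NORMALIZINGCONSTANTS}) between $\hat Z_N(h)$ and $|\det\acal_N(h)|^{-2}$, then dividing by $N^2$ and passing to the limit. Your caveat about tracking the sign of the $E(h)$ coefficient is well placed, since $-\tfrac12 N(N-1)+N(N+1)=\tfrac12 N^2+\tfrac32 N$ has leading coefficient $+\tfrac12$, and one must be consistent with the convention in Lemma \ref{JPDINV}.
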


\begin{proof} By Lemma \ref{JPDINV},
 $$\lim_{N \to \infty} \frac{1}{N^2} \log \hat{Z}_N(h) =   \lim_{N \to \infty}
\frac{1}{N^2}  \log |\det \acal_N(h)|^{-2} + \frac{1}{2} E(h). $$
\end{proof}

\section{\label{APPENDIX} Appendix}

This Appendix contains proofs of some technicalities used in the
proofs of Theorem \ref{g0}.

\subsection{\label{REG} Regularization of measures}

In the large deviations lower bound, we will need to prove that
any $\mu \in \mcal(\CP^1)$ may be weakly approximated by measures
with continuous densities. In \cite{BZ}, this was proved using
convolution with Gaussians; since we are working on $\CP^1$, we
need a suitable replacement.

We once again use the auxiliary \kahler metric and its Laplacian
$\Delta_{\omega}$. It generates the heat operator $e^{t
\Delta_{\omega}}$. We denote its heat kernel by \begin{equation}
\label{HEAT} K_{\omega}(t, z,w) = \sum_{j = 1}^{\infty} e^{-t
\lambda_j} \phi_j(z) \phi_j(w),  \end{equation} and write
\begin{equation} e^{t \Delta_{\omega}}\mu (z) = \int_{\CP^1} K_{\omega}(t, z, w)d \mu(w). \end{equation}
It is well-known and easy to see that $e^{t \Delta_{\omega}}\mu
(z) \in C^{\infty}(\CP^1)$ for any $\mu \in \mcal(\CP^1)$.  The
following simple Lemma is sufficient for our purposes:
\begin{lem} \label{REGLEMMA} If $I^{h, K} (\mu) < \infty$, then $e^{t \Delta_{\omega}} \mu \to
\mu$ in $\ecal_+$ as $t \to 0^+$. In particular, $I^{h, K} (e^{t
\Delta} \mu) \to I^{h, K}(\mu). $ Moreover, $e^{t \Delta_{\omega}}
\mu \in \mcal(\CP^1).$
   \end{lem}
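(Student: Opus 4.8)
The plan is to carry out all the analysis in the spectral representation of $\Delta_\omega$ from \S\ref{GF1}: the orthonormal basis $\{\phi_j\}_{j\geq 0}$ of eigenfunctions of $\Delta_\omega$ on $L^2(\CP^1,\omega)$, with $\phi_0\equiv 1$ (since $A=\int\omega=1$) and $0=\lambda_0>\lambda_1\geq\lambda_2\geq\cdots$, together with the expansions \eqref{EFNEXP} of $G_\omega$ and \eqref{GRINNPROD} of the energy form, and the heat kernel \eqref{HEAT}. The last assertion $e^{t\Delta_\omega}\mu\in\mcal(\CP^1)$ needs no hypothesis on $\mu$: the heat kernel $K_\omega(t,z,w)$ is nonnegative for $t>0$, so $e^{t\Delta_\omega}\mu\geq 0$; and since $e^{t\Delta_\omega}$ is self-adjoint on $L^2(\CP^1,\omega)$ and fixes $\phi_0\equiv 1$, Fubini gives $\int_{\CP^1}(e^{t\Delta_\omega}\mu)\,\omega=\int_{\CP^1}(e^{t\Delta_\omega}1)\,d\mu=\mu(\CP^1)=1$, so $e^{t\Delta_\omega}\mu$ is a probability measure (identified, as in the text, with the smooth density $e^{t\Delta_\omega}\mu(z)$ times $\omega$).

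Next I would observe that $I^{h,K}(\mu)<\infty$ forces $\mu\in\ecal_+$. Indeed, $\sup_K U_h^\mu$ is finite, by Corollary~\ref{UUB} and the boundedness of $J^{h,K}$ on $\mcal(\CP^1)$ recorded after Lemma~\ref{LSCJG}, so $I^{h,K}(\mu)<\infty$ is equivalent to $\ecal_h(\mu)>-\infty$; by Lemma~\ref{ENERGYFORMS}, $\ecal_h(\mu)=\langle\mu,\mu\rangle_\omega+2\int U_\omega^{\omega_h}\,d\mu$ with $U_\omega^{\omega_h}$ bounded (it is $C^\infty$; see Proposition~\ref{prop-grub}), so $\ecal_h(\mu)>-\infty$ iff $\langle\mu,\mu\rangle_\omega>-\infty$, i.e. $\mu\in\ecal_+$. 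Setting $a_j=\int_{\CP^1}\phi_j\,d\mu$, this says $\sum_{j\geq 1}a_j^2/|\lambda_j|<\infty$ by \eqref{GRINNPROD}. Since the $\phi_j$-coefficient of $e^{t\Delta_\omega}\mu$ is $e^{t\lambda_j}a_j$ (self-adjointness of $e^{t\Delta_\omega}$), the same formula gives
$$ -\langle e^{t\Delta_\omega}\mu-\mu,\ e^{t\Delta_\omega}\mu-\mu\rangle_\omega=\sum_{j\geq 1}\frac{(1-e^{t\lambda_j})^2 a_j^2}{|\lambda_j|}\,. $$
Each term tends to $0$ as $t\downarrow 0$ and is dominated by $a_j^2/|\lambda_j|$ (because $0<1-e^{t\lambda_j}<1$ for $\lambda_j<0$, $t>0$), so dominated convergence gives $e^{t\Delta_\omega}\mu\to\mu$ in $\ecal_+$.

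For the convergence $I^{h,K}(\sigma_t)\to I^{h,K}(\mu)$ with $\sigma_t:=e^{t\Delta_\omega}\mu$, I would treat the two terms of $I^{h,K}$ separately. For the energy, the same expansion gives $\ecal_\omega(\sigma_t)-\ecal_\omega(\mu)=\sum_{j\geq 1}(1-e^{2t\lambda_j})a_j^2/|\lambda_j|\to 0$ by dominated convergence. Moreover $\sigma_t\to\mu$ weakly, since for $f\in C(\CP^1)$ one has $\int f\,d\sigma_t=\int (e^{t\Delta_\omega}f)\,d\mu\to\int f\,d\mu$, using that $e^{t\Delta_\omega}f\to f$ uniformly as $t\downarrow 0$; combining this with Lemma~\ref{ENERGYFORMS} and the continuity of $U_\omega^{\omega_h}$ yields $\ecal_h(\sigma_t)\to\ecal_h(\mu)$. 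For the term $J^{h,K}(\sigma_t)=\sup_K U_h^{\sigma_t}$, the weak convergence $\sigma_t\to\mu$ together with the weak continuity of $J^{h,K}$ on $\mcal(\CP^1)$ from Proposition~\ref{LSCJG} — this is where the assumption that all points of $K$ are regular enters — gives $J^{h,K}(\sigma_t)\to J^{h,K}(\mu)$. Hence $I^{h,K}(\sigma_t)\to I^{h,K}(\mu)$.

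The only mildly delicate point is the identification $\{\mu:I^{h,K}(\mu)<\infty\}=\ecal_+$ and the bookkeeping relating the $\omega$-energy and the $\omega_h$-energy via Lemma~\ref{ENERGYFORMS}; I do not anticipate a genuine obstacle, since the rest is a routine application of dominated convergence to the spectral series and of the continuity properties of $J^{h,K}$ already established. This is a soft regularization lemma.
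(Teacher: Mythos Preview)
Your argument is correct and follows the same spectral/heat-kernel approach as the paper: positivity of the heat kernel gives $e^{t\Delta_\omega}\mu\in\mcal(\CP^1)$, and convergence in $\ecal_+$ follows from the eigenfunction expansion together with dominated (the paper says monotone) convergence of $\sum_j(1-e^{t\lambda_j})|\mu(\phi_j)|^2/|\lambda_j|$. Your write-up is in fact more complete than the paper's, which leaves the ``In particular'' clause about $I^{h,K}(e^{t\Delta_\omega}\mu)\to I^{h,K}(\mu)$ implicit; you spell out both the energy term via Lemma~\ref{ENERGYFORMS} and the $J^{h,K}$ term via its weak continuity (Lemma~\ref{LSCJG}), which is the intended mechanism.
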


\begin{proof} It is well known (and follows by the maximum
principle for the heat equation) that $K_{\omega}(t, z, w) > 0$.
Hence, $\mu_t : = \left( e^{t \Delta_{\omega}} \mu\right)\omega $
is a positive measure. Further, $\int_{\CP^1} \mu_t = 1$ since
$\int_{\CP^1} K(t, z, w) \omega = 1$.

 As observed above, it is equivalent to say that
$e^{t \Delta_{\omega}} \mu \to \mu$ in $H^{-1}(\CP^1)$. It
suffices to observe that by monotone convergence,
$$ \lim_{t \to 0^+} \sum_{j = 1}^{\infty}\frac{1 - e^{t
\lambda_j}}{\lambda_j}  |\mu(\phi_j)|^2 = 0. $$
\end{proof}

 \subsection{\label{IP} Residue at infinity of certain integrals}  This section is
 rather technical. We go over the `residue at infinity' of a
 number of integrals that arise in the calculations of the LDP.

 Integrals of the form $\int_{\C} v dd^c
 u$  often arise when $u, v$ are
potentials for probability measures.
 The subtlety is the `boundary term' at infinity. To illustrate, we note that although $dd^c \log (1 + |z|^2)^{/12}$ is exact,
 the integral (e.g.) $\int_{C} dd^c \log (1 +
 |z|^2)^{1/2} = 1$ and not zero.  The integral has been studied in all dimensions in \cite{BT}, and
although we are only
 working in dimension one we will follow their presentation.

 The  formula for $\int_{\C} v dd^c u$  depends on the class of functions that $u, v$ belong
 to. Following \cite{BT}, one defines the Robin function
  $\rho_u$ of a subharmonic function $u$ by
 \begin{equation}\label{ROBINDEF}  \rho_u(z) = \limsup_{\lambda \to \infty} \left(u
 (\lambda z) - \log^+ |\lambda z| \right). \end{equation}
Here,  $u^*$ denotes the upper semi-continuous
 regularization of $u$. We denote by  $\lcal$  the Lelong class of subharmonic
 functions satisfying $u(z) \leq \log^+ |z| + C$, and put    $\lcal_{\rho} := \{u \in \lcal(\C): \rho_u \not= -
 \infty\}. $ Note that $\rho_1 \equiv - \infty$.

\subsubsection{Case (i): $u, v \in \lcal_{\rho}$ }

 It is proved in \cite{BT} that if $u, v \in
\lcal_{\rho}$, then
 \begin{equation} \label{BT} \int_{\C} u dd^c v - v dd^c u = 2 \pi
 (\rho_u^*(\infty) - \rho_v^*(\infty)), \end{equation}

The formula applies in particular to:

 \begin{enumerate}

 \item $\int_{\C} \psi dd^c \psi$ where $\omega = dd^c \psi$ on
 $\C$; here $\omega$ is the curvature $(1,1)$ form of any
 Hermitian metric on $\ocal(1)$. For any such metric, $\psi \in
 \lcal_{\rho}$.

 \item $\int_{\C} \log |z - w| dd^c \psi$ where $\psi$ is as
 above.

 \item $\int_{\CP^1} \log ||s_{\zeta}||^2_{h^N} \omega_h$, where as usual $s_{\zeta} = \prod_{j = 1}^N (z - \zeta_j) e^N. $

 \end{enumerate}

 In the case (1),  the integration by parts formula just reproduces the same
 expression, but we include it for future reference.

In case (2), we first consider the Fubini-Study case, where the
Hermitian metric $h_{FS}$ is locally given in the standard frame
by  $\psi(w) = \log (1 + |w|^2)^{1/2}$. It is simple to see that
 \begin{equation} \label{FSINT0} \begin{array}{lll} \int_{\C} \log |z - w| dd^c \log (1 +
 |w|^2)
 & = & \frac{1}{2} \log (1 +
 |z|^2) . \end{array} \end{equation}
 This follows from (\ref{FUNDSOL}) and the fact that  $\rho_{\log (1 + |w|^2)}(\infty) = \rho_{\log |z - w|} (\infty) =0$,
 since   for sufficiently large $|w|$,
 $$\begin{array}{l} \log |z - \lambda w| - \log |\lambda w| = \log |1 - \frac{z}{\lambda w}| = \Re \log (1
 - \frac{z}{\lambda w}) = - \Re \frac{z}{\lambda w} + \dots = o(1),\\ \\
  \log (1 + |\lambda w|^2)^{1/2} - \log |\lambda w| = \log (1 + |\lambda w|^{-1})^{1/2}= o(1).\end{array} $$

 In the case of a general Hermitian metric on $\ocal(1)$, we may
 write $h = h_{FS} e^{- \Phi}$ where $\Phi \in C^{\infty}(\CP^1)$.
 Then on $\C$, the weight has the form $\log (1 + |w|^2)^{1/2} +
 \Phi \in \lcal_{\rho}$. Now $\rho_{\psi} (\infty) =
 \Phi(\infty)$. Hence,
\begin{equation} \label{FSINT} \begin{array}{lll} \int_{\C} \log |z - w| dd^c
  \psi
 & = &   (\frac{1}{2} \psi(z) - 2\pi
 \rho_{\psi}(\infty)) =  \frac{1}{2} \psi(z) - 2\pi \Phi(\infty).
 \end{array} \end{equation}

In case (3), we again express the  Hermitian metric as $h = e^{-
\Phi} h_{FS}$, and then
\begin{equation} \label{LOGSINTa} \begin{array}{lll}  \int_{\CP^1} \log ||s_{\zeta}||^2_{h^N} \omega_h & = & 2 \sum_{j  = 1}^N
\int_{\C} \log |z - \zeta_j| dd^c (\log (1 + |z|^2) +
\Phi(z)) \\ && \\
& = &  \sum_j \log (1 + |\zeta_j|^2) + 2 \pi (\Phi(\zeta_j)) -
\Phi(\infty))\\ \\
& = &  N \int (\log (1 + |w|^2) + 2\pi (  \Phi(w) - \Phi(\infty))
d\mu_{\zeta}(w)) .
\end{array}
\end{equation}
This integral is also evaluated in Lemma \ref{NORMSUPHI}.

 \subsubsection{Case (ii) $u, v \in C^2(\CP^1)$ }
 Obviously, in this case we can simply integrate by parts.

\subsubsection{Case (iii) $u \in \lcal_{\rho}, v \in C^2(\CP^1)$}

We are interested in three cases:

\begin{enumerate}

 \item  $\int_{\C} \log |z - w| dd^c v$ where $v \in
C^2(\CP^1)$.

\item  $\int_{\C} \log |z - w| dd^c \phi$ where $e^{- \phi}$ is
the local expression on $\C$ of a global smooth Hermitian metric
$h_{\phi}$  on $\ocal(1)$. Equivalently,
 there exists a global continuous $(1,1)$ form
$\omega$  such that $\int_{\CP^1} \omega = 1$ and such that
$\omega - \delta_{\infty} = dd^c u$.

\item The integral $\int_{\C} u dd^c v$, where $u$ is the
potential on $\C$ of a global smooth  hermitian metric on
$\ocal(1)$.

\item $v \equiv 1, i.e. \; \int_{\C} dd^c \phi = \int_{\CP^1}
\omega = 1. $

\end{enumerate}

In case (1), the logarithmic factor lies in $\lcal_{\rho}$ but $v
\notin \lcal_{\rho}$.  When $v \in C^2(\CP^1)$, we claim that
\begin{equation} \label{LOGINTb} \int_{\C} \log
|z - w| dd^c v =  \frac{1}{2} v(z) - 2\pi v(\infty).
\end{equation} Indeed, we   form the metric $h_{FS} e^{- v}$, apply
(\ref{FSINT}) with $\psi = \log (1 + |w|^2)^{1/2} + v$ and then
subtract the integral for $\log (1 + |w|^2)^{1/2}$.

Similarly, in  case (2) we write the Hermitian metric $h_{\phi} =
e^{- \phi}$ in the form $h_{\phi} = e^{- \psi} h_{FS}$ with $\psi
\in C^{\infty}(\CP^1)$. Then by (\ref{FSINT}) and (\ref{LOGINTb}),
\begin{equation} \label{LOGROBIN} \begin{array}{lll} \int_{\C} \log |z - w| dd^c \phi & =
&\int_{\C} \log |z - w| dd^c \log (1 + |w|^2)^{1/2} + \int_{\C}
\log |z - w| dd^c \psi
\\ && \\
&= &  \frac{1}{2}\log (1 + |z|^2)^{1/2} + \frac{1}{2}
\psi(z) - 2\pi \psi(\infty)\\ && \\
& = & \phi(z)/2 - 2 \pi \rho_{\phi}(\infty). \end{array}
\end{equation}

By a similar calculation, in case  (3) we have,
\begin{equation} \label{uROBIN} \int_{\C}   u dd^c v = \int_{\C} v dd^c u +
2 \pi (\rho_u(\infty) -  v(\infty)), \end{equation} while in  case
(4) we have \begin{equation} \label{ROBINPHI} \int_{\C} dd^c \phi
= - 2 \pi  \rho_{\phi} (\infty) = - 2 \pi.
\end{equation}


\begin{thebibliography}{HHHH}

\bibitem[ABMNV]{ABMNV} L. Alvarez-Gaum\'e, J-B.   Bost, G.  Moore, P.  Nelson, and C.  Vafa,  Bosonization on higher genus Riemann surfaces. Comm. Math. Phys. 112 (1987), no. 3, 503--552.



\bibitem[BT]{BT} W. Bedford and B. A.  Taylor,  Plurisubharmonic functions with logarithmic singularities. Ann. Inst. Fourier (Grenoble) 38 (1988), no. 4,
133--171.

 \bibitem[BG]{BG} G.  Ben Arous and A.  Guionnet,
  Large deviations for Wigner's law and Voiculescu's non-commutative entropy.
  Probab. Theory Related Fields 108 (1997), no. 4, 517--542.

  \bibitem[BZ]{BZ} G. Ben Arous and O.  Zeitouni,
Large deviations from the circular law.
ESAIM Probab. Statist. 2 (1998), 123--134.

  \bibitem[Ber1]{Ber1} R. Berman,   Bergman kernels and equilibrium measures for ample line bundles
(arXiv:0704.1640)

\bibitem[Ber2]{Ber2} R. Berman,
 Bergman kernels and weighted equilibrium measures of $\C^n$ (arXiv:math/0702357).

 \bibitem[BB]{BB} R. Berman and S.  Boucksom,
  Capacities and weighted volumes of line bundles,  arXiv:0803.1950.

  \bibitem[BSZ]{BSZ} P. Bleher, B.  Shiffman, and S. Zelditch,  Poincar\'e-Lelong approach to universality and scaling of correlations between zeros. Comm. Math. Phys. 208 (2000), no. 3, 771--785.

  \bibitem[B]{B} T. Bloom, Random polynomials and Green functions. Int. Math. Res. Not. 2005, no. 28, 1689--1708.

  \bibitem[B2]{B2} T. Bloom,  Weighted polynomials and weighted pluripotential theory ( arXiv:math/0610330).

  \bibitem[BS]{BS} T. Bloom and B. Shiffman,
Zeros of random polynomials on $\C^m$, Math. Res. Lett. 14 (2007),
no. 3, 469--479.


\bibitem[BBL1]{BBL1} E. Bogomolny, O.
Bohigas, and P. Leboeuf, Quantum chaotic dynamics and random
polynomials. J. Statist. Phys. 85 (1996), no. 5-6, 639--679.

\bibitem[BBL2]{BBL2} E. Bogomolny, O.
Bohigas, and P. Leboeuf,Distribution of roots of random
polynomials. Phys. Rev. Lett. 68 (1992), no. 18, 2726--2729.


\bibitem[C]{C} H. Cartan,  Th\'eorie du potentiel newtonien: \'energie, capacit\'e, suites de potentiels. Bull. Soc. Math. France 73, (1945).
74--106.

\bibitem[Ch]{Ch} G. Choquet,  Sur les points d'effilement d'un ensemble. Application à l'\'etude de la capacit\';e.  Ann. Inst. Fourier. Grenoble 9 1959 91--101



\bibitem[Dem]{Dem} J. P. Demailly, Complex Analytic and
Differential Geometry, www.fourier.ujf-grenoble.r/~demailly.




\bibitem[DZ]{DZ} A.  Dembo and O.  Zeitouni, {\it  Large deviations techniques and applications}.
 Second edition. Applications of Mathematics (New York), 38. Springer-Verlag, New York, 1998.


\bibitem[D]{D} J. Deny,  Les potentiels d'\'energie finie.  Acta Math. 82, (1950). 107--183.


 \bibitem[Don2]{Don2} S.K. Donaldson,  Scalar curvature and projective embeddings. II. Q. J. Math. 56 (2005), no. 3, 345--356.



\bibitem[Fal]{Fal} G. Faltings,  Calculus on arithmetic surfaces. Ann. of Math. (2) 119 (1984), no. 2, 387--424





 \bibitem[GH]{GH} P. Griffiths and J.  Harris, {\it Principles of algebraic geometry}. Wiley Classics Library.
 John Wiley $\& $ Sons, Inc., New York, 1994.

 \bibitem[GrS]{GrS} U. Grenander and G.  Szego, {\it Toeplitz forms and their applications}. Second edition. Chelsea Publishing Co., New York, 1984



 \bibitem[GZ]{GZ} V. Guedj and A.  Zeriahi,  Intrinsic capacities on compact Kähler manifolds. J. Geom. Anal. 15 (2005), no. 4, 607--639.



\bibitem[H]{H} J. H. Hannay, Chaotic analytic zero points: exact statistics for those of a random spin state. J. Phys. A 29 (1996), no. 5, L101--L105.

\bibitem[He]{He} G. Hein,  Computing Green currents via the heat kernel. J. Reine Angew. Math. 540 (2001), 87--104.

 \bibitem[HP]{HP00a}
F.~Hiai and D.~Petz.
\newblock {\em The semicircle law, free random variables and entropy},
   volume~77 of {\em Mathematical Surveys and Monographs}.
\newblock American Mathematical Society, Providence, RI, 2000.


\bibitem[Ho]{Ho} L. H\"ormander, {\it The Analysis of Linear Partial
Differential Operators, I}, Grund.\ Math.\ Wiss.\ 256,
Springer-Verlag, New York, 1983.

\bibitem[HoII]{HoII} L. H\"ormander, {\it The analysis of linear partial differential operators. II. Differential operators with constant coefficients. }
 Classics in Mathematics. Springer-Verlag, Berlin, 2005.

\bibitem[HoIII]{HoIII} L. H\"ormander, {\it The analysis of linear partial differential operators. III.  }
 Classics in Mathematics. Springer-Verlag, Berlin, 2005.

\bibitem[Lan]{Lan} N.S. Landkof, {\it
Foundations of modern potential theory.}   Die Grundlehren der
mathematischen Wissenschaften, Band 180. Springer-Verlag, New
York-Heidelberg, 1972.

\bibitem[LP]{LP} A. Lascoux and P. Pragacz,  Jacobians of symmetric polynomials. Ann. Comb. 6 (2002), no. 2, 169--172.







    \bibitem[Ran]{Ran} T. Ransford,
{\it Potential theory in the complex plane.}
London Mathematical Society Student Texts, 28. Cambridge University Press, Cambridge, 1995.

\bibitem[ST]{ST} E.B. Saff and V. Totik, {\it Logarithmic potentials with external fields}. Appendix B by Thomas Bloom. Grundlehren der Mathematischen Wissenschaften  316. Springer-Verlag, Berlin, 1997.



\bibitem[SZ]{SZ} B. Shiffman and S. Zelditch,  Equilibrium distribution of zeros of random polynomials. Int. Math. Res. Not. 2003, no. 1, 25--49.



\bibitem[SZ2]{SZ2} B. Shiffman and S. Zelditch,
 Distribution of zeros of random and quantum chaotic sections of positive line bundles. Comm. Math. Phys. 200 (1999), no. 3, 661--683.

 \bibitem[SZ3]{SZ3} B. Shiffman and S. Zelditch,
Number variance of random zeros, Geom. Funct. Anal. 18 (2008), no.
4, 1422--1475 (math.CV/0512652).


\bibitem[SZZr]{SZZr} B. Shiffman, S. Zelditch, and S. Zrebiec, Overcrowding and hole probabilities for random zeros on complex manifolds. Indiana Univ. Math. J. 57 (2008), no. 5,
1977--1997.

\bibitem[SoTs]{SoTs} M. Sodin and B.  Tsirelson,
Random complex zeroes. III. Decay of the hole probability. Israel
J. Math. 147 (2005), 371--379.

\bibitem[Z]{Z} S. Zelditch, Large deviations of empirical zero point measures on Riemann
surfaces, II: $g \geq 1$ (preprint, 2008).




\end{thebibliography}
\end{document}